\newtheorem{thm}{Theorem}
\newtheorem{lem}[thm]{Lemma}
\newtheorem{cor}[thm]{Corollary}
\theoremstyle{definition}
\theoremstyle{remark}
\newtheorem*{rmks}{Remarks}
\DeclareMathOperator{\betaIregularized}{I}
\DeclareMathOperator{\betaB}{B}
\DeclareMathOperator{\DEF}{:=}
\newcommand{\PT}[1]{\mathbf{#1}}
\DeclareMathOperator{\dd}{\mathrm{d}}
\DeclareMathOperator{\xctint}{I}
\DeclareMathOperator{\numint}{Q}
\DeclareMathOperator{\bernoulliB}{B}
\DeclareMathOperator{\betafcn}{B}
\DeclareMathOperator{\incompletebetafcnregularized}{I}
\DeclareMathOperator{\diam}{diam}
\DeclareMathOperator{\gammafcn}{\Gamma}
\DeclareMathOperator{\gammafcnregularizedP}{P}
\DeclareMathOperator{\gammafcnregularizedQ}{Q}
\DeclareMathOperator{\indicatorfunction}{\mathbf{1}}
\DeclareMathOperator{\EulerE}{\mathrm{e}}
\DeclareMathOperator{\inversegammafcnregularizedQ}{Q^{-1}}
\DeclareMathOperator{\minwce}{minwce}
\DeclareMathOperator{\psidensity}{f_\psi}
\DeclareMathOperator{\RK}{K}
\DeclareMathOperator{\RKmod}{\mathcal{K}}
\DeclareMathOperator{\wce}{wce}
\DeclareMathOperator{\HyperF}{F}
\newcommand{\Hypergeom}[5]{{\sideset{_#1}{_#2}\HyperF\!\left(\substack{\displaystyle#3\\\displaystyle#4};#5\right)}}
\title[Spatial low-discrepancy sequences]{Spatial low-discrepancy sequences, spherical cone discrepancy, and applications in financial modeling}
\author[J. S. Brauchart, J. Dick, and L. Fang]{Johann S. Brauchart, Josef Dick and Lou Fang} 
\thanks{\noindent The research of the first two authors was supported under Australian Research Council's Discovery Projects funding scheme (project number DP120101816).}
\date{\today}
\begin{document}

\address{ \tiny
J. S. Brauchart and J. Dick:
School of Mathematics and Statistics,
University of New South Wales,
Sydney, NSW, 2052,
Australia }
\email{j.brauchart@unsw.edu.au, josef.dick@unsw.edu.au}

\address{ \tiny
Lou Fang: Department of Mathematical Sciences, Tsinghua University,
Beijing, 100084, China
} \email{louf@163.com}

\begin{abstract}
In this paper we introduce a reproducing kernel Hilbert space defined on~$\mathbb{R}^{d+1}$  as the tensor product of a reproducing kernel defined on the unit sphere $\mathbb{S}^{d}$ in $\mathbb{R}^{d+1}$ and a reproducing kernel defined on $[0,\infty)$. We extend Stolarsky's invariance principle to this case and prove upper and lower bounds for numerical integration in the corresponding reproducing kernel Hilbert space.

The idea of separating the direction from the distance from the origin can also be applied to the construction of quadrature methods. An extension of the area-preserving Lambert transform is used to generate points on $\mathbb{S}^{d-1}$ via lifting Sobol' points in $[0,1)^{d}$ to the sphere. The $d$-th component of each Sobol' point, suitably transformed, provides the distance information so that the resulting point set is normally distributed in $\mathbb{R}^{d}$.

Numerical tests provide evidence of the usefulness of constructing Quasi-Monte Carlo type methods for integration in such spaces. We also test this method on examples from financial applications (option pricing problems) and compare the results with traditional methods for numerical integration in $\mathbb{R}^{d}$.
\end{abstract}

\keywords{Euclidean space, option pricing, Quasi-Monte Carlo methods, reproducing kernel Hilbert space, Sobol' sequence, sphere, spherical cone discrepancy, Stolarsky's invariance principle}
\subjclass[2000]{Primary 41A30; Secondary 11K38, 41A55}

\maketitle


\section{Introduction}

We study numerical integration of functions defined in $\mathbb{R}^{d+1}$ for $d \geq 0$,
\begin{equation}\label{eq_int}
\xctint[\psi](f) \DEF \int_{\mathbb{R}^{d+1}} f(\PT{x}) \psi(\PT{x}) \dd \lambda_{d+1}(\PT{x}),
\end{equation}
where $\psi$ is a probability density function (pdf) (typically a normal or related distribution) and $\lambda_{d+1}$ is the Lebesgue measure on $\mathbb{R}^{d+1}$, by means of Quasi-Monte Carlo (QMC) methods
\begin{equation} \label{eq:num.int}
\numint[X_N](f) \DEF \frac{1}{N} \sum_{j=1}^N f(\PT{x}_j).
\end{equation}
These methods are exact for constant functions. The requirement that $\numint[X_N](f) \to \xctint[\psi](f)$ as $N \to \infty$ for every continuous function defined in $\mathbb{R}^{d+1}$ imposes the condition that the quadrature nodes $\PT{x}_1,\ldots, \PT{x}_N$ have limit distribution given by the pdf $\psi$. A standard method for generating low-discrepancy sequences of quadrature points with the required distribution is by using low-discrepancy points in $[0,1)^{d+1}$ and mapping them to $\mathbb{R}^{d+1}$ via the inverse cumulative distribution function (cdf) of $\psi$, provided the inverse cdf is known. Here we use the following approach: starting with a sequence of well-distributed point sets on the unit sphere $\mathbb{S}^d \DEF \{\PT{x} \in \mathbb{R}^{d+1}: \|\PT{x}\| = 1\}$, we then change the radii of the points such that the resulting configurations in $\mathbb{R}^{d+1}$ follow the required distribution.

%
The analysis of the performance of our integration strategy makes use of the reproducing kernel Hilbert space framework by assuming that the functions to be integrated are from a certain reproducing kernel Hilbert space $\mathcal{H}(\RK)$ defined over $\mathbb{R}^{d+1}$. An essential tool will be an explicit expression for the worst-case integration error of our QMC methods in terms of the kernel $\RK$. Our kernel construction leads to a geometrical interpretation of the worst-case error as an $\mathbb{L}_2$-discrepancy of the integration nodes with respect to test sets that are truncated infinite (anchored at infinity) spherical cones. The underlying relation gives rise to an invariance principle (cf. Theorem~\ref{thm:wce.form.A}) similar to Stolarsky's invariance principle for the sphere; cf. \cite{BrDi2013b,BrDi2013} and \cite{St1973}.
%
%
%
We define the reproducing kernel $\RK$ geometrically as follows. A \emph{spherical cap} with center $\PT{z}^* \in \mathbb{S}^d$ and height $t \in [-1,1]$ is the set
\begin{equation*}
\mathcal{C}( \PT{z}^*, t ) \DEF \left\{ \PT{y}^* \in \mathbb{S}^d : \PT{y}^* \cdot \PT{z}^* \geq t \right\}.
\end{equation*}
Based on a spherical cap $\mathcal{C}( \PT{z}^*, t )$ we define the \emph{truncated infinite spherical cone} as the set
\begin{equation*}
\mathcal{C}( \PT{z}^*, t; R ) \DEF \left\{ \rho \PT{y}^* \in \mathbb{R}^{d+1} : \PT{y}^* \in \mathcal{C}( \PT{z}^*, t ), \rho \geq R  \right\}.
\end{equation*}
For $R=0$ the set $\mathcal{C}(\PT{z}^*, t; R)$ is an infinite spherical cone and for $R > 0$ it is the intersection of the infinite spherical cone with the complement of an open ball of radius $R$ centered at the origin. 
Let $\phi:[0,\infty) \to [0,\infty)$ be a probability density function. Then we set 
\begin{equation*}
\RK(\PT{x}, \PT{y}) \DEF \int_0^\infty \int_{-1}^1 \int_{\mathbb{S}^d} \indicatorfunction_{\mathcal{C}(\PT{z}^*, t; R)}(\PT{x}) \indicatorfunction_{\mathcal{C}(\PT{z}^*, t; R)}(\PT{y}) \dd \sigma_d(\PT{z}^*) \dd t \, \phi(R) \dd R, \qquad \PT{x}, \PT{y} \in \mathbb{R}^{d+1},
\end{equation*}
where $\indicatorfunction_{A}$ is the indicator function of the set $A$.
In Section~\ref{sec:rkhs} we derive this kernel as a product of a kernel defined on $[0,\infty) \times [0,\infty)$ denoted by $\RK_{\mathcal{R}}$ and a kernel $\RK_{\mathcal{S}}$ defined on $\mathbb{S}^d \times \mathbb{S}^d$. Thus the corresponding reproducing kernel Hilbert space $\mathcal{H}(\RK)$ is a tensor product of a reproducing kernel Hilbert space $\mathcal{H}(\RK_{\mathcal{S}})$ defined on $\mathbb{S}^d$ and a reproducing kernel Hilbert space $\mathcal{H}(\RK_{\mathcal{R}})$ defined on $[0,\infty)$.

Note that $\RK(\PT{x}, \PT{y})$ assumes the value $0$ whenever at least one of the arguments $\PT{x}, \PT{y}$ is $\PT{0}$. Thus the kernel is anchored at $\PT{0}$. This implies that all functions in $\mathcal{H}(\RK)$ vanish at $\PT{0}$. 
Assume now that we are given a function $f$ with $f(\PT{0}) = C$ for some constant $C \in \mathbb{R}$ and that $f-C \in \mathcal{H}(\RK)$. Then
\begin{equation*}
\numint[X_N](f) - \xctint[\psi](f) = \numint[X_N](f-C) - \xctint[\psi](f-C),
\end{equation*}
since constant functions are integrated exactly by $\numint[X_N]$. Thus results for the worst-case error $\wce( \numint[X_N]; \mathcal{H}(\RK))$ apply also to functions $f$ such that $f-f(\PT{0}) \in \mathcal{H}(\RK)$. In other words, the restriction that $f(\PT{0}) = 0$ for all $f \in \mathcal{H}(\RK)$ can be removed when discussing numerical integration using QMC methods.

%
%
Also note that, in general, $\phi$ and $\psi$ are not related and can be chosen independently, provided that \eqref{eq_int} is well defined.


To prove upper bounds on the integration error, we study the QMC mean, that is, the average over all possible choices of quadrature points (which have the correct distribution). This shows that a typical QMC method with nodes that are selected at random independently and identically $\psi \lambda_{d+1}$-distributed in $\mathbb{R}^{d+1}$ achieve an upper bound of the order $N^{-1/2}$ (Theorem~\ref{thm:rms.wce.form.A}).

To prove a lower bound for the worst-case error, we show that $\mathcal{H}(\RK_{\mathcal{S}})$ and $\mathcal{H}(\RK_{\mathcal{R}})$ are isomorphically embedded in $\mathcal{H}(\RK)$. Thus, known lower bounds for numerical integration in $\mathcal{H}(\RK_{\mathcal{S}})$ provide lower bounds for numerical integration in $\mathcal{H}(\RK)$. The lower bound is of order $N^{-1/2 - 1/(2d)}$ (Theorem~\ref{thm:wce.lower.bound}).


We also present numerical results for a trial function on the sphere and three problems from option pricing. We compare our method with standard Monte Carlo and Quasi-Monte Carlo approaches. We observe that our method performs better than the Monte Carlo simulation and on average marginally better than the QMC approach.


\section{Spherical cone discrepancy }\label{sec_cone_dis}

In the following let $\sigma_d$ be the normalized surface area measure on the unit sphere $\mathbb{S}^d$ in~$\mathbb{R}^{d+1}$. (The non-normalized surface of the sphere is then denoted by $\omega_d$.)

\subsection{A reproducing kernel Hilbert space on $\mathbb{R}^{d+1}$}
\label{sec:rkhs}

In what follows we introduce a reproducing kernel Hilbert space on $\mathbb{R}^{d+1}$ as a tensor product space of two reproducing kernel Hilbert spaces. The motivation comes from the fact that every point $\PT{x} \in \mathbb{R}^{d+1} \setminus \{\PT{0}\}$ can be decomposed into a direction (represented by a point $\PT{x}^* \in \mathbb{S}^d$) and the distance to the origin ($r > 0$); that is, $\PT{x} = r \, \PT{x}^*$.
%

Let ${\phi: [0, \infty) \to \mathbb{R}}$ be a probability density function. We define the following  kernel $\RK_{\mathcal{R}}: [0,\infty) \times [0,\infty) \to \mathbb{R}$ by
\begin{equation} \label{eq:RK.cal.R}
\RK_{\mathcal{R}}(r,\rho) \DEF \int_0^\infty \indicatorfunction_{[R,\infty)}(r) \indicatorfunction_{[R, \infty)}(\rho) \, \phi(R) \dd R, \qquad r, \rho \geq 0.
\end{equation}
It can be verified that the function $\RK_{\mathcal{R}}$ is symmetric and positive definite; i.e., for all $a_1,\ldots, a_N \in \mathbb{C}$ and all $x_1,\ldots, x_N \in [0,\infty)$ we have
\begin{equation*}
\sum_{i=1}^N \sum_{j=1}^N a_i \, \RK_{\mathcal{R}}(x_i, x_j) \, \overline{a_j} \geq 0.
\end{equation*}
By \cite{Ar1950} it follows that $\RK_{\mathcal{R}}$ is a reproducing kernel which uniquely defines a reproducing kernel Hilbert space $\mathcal{H}(\RK_{\mathcal{R}})$ with inner product $(\cdot, \cdot)_{\RK_{\mathcal{R}}}$. Set
\begin{equation} \label{eq:Phi}
\Phi(r) \DEF \int_{0}^r \phi(R) \dd R, \qquad r \geq 0.
\end{equation}
Observe that $\Phi(\infty) = \int_0^\infty \phi(R) \dd R = 1$. It can be readily seen that
\begin{equation} \label{eq:RK.cal.R.closed.form}
\RK_{\mathcal{R}}(r,\rho) = \Phi(\min\{r,\rho\}), \qquad r, \rho \geq 0.
\end{equation}
Note that $\RK_{\mathcal{R}}(r,0) = \RK_{\mathcal{R}}(0, \rho) = 0$ for all $r, \rho \in [0,\infty)$.

Further, for $\PT{x}^*, \PT{y}^* \in \mathbb{S}^d$ let the kernel $\RK_{\mathcal{S}}$ be defined by
\begin{equation*}
\RK_{\mathcal{S}}( \PT{x}^*, \PT{y}^* ) \DEF \int_{-1}^1 \int_{\mathbb{S}^d} \indicatorfunction_{\mathcal{C}( \PT{x}^*, t )}( \PT{z}^* ) \indicatorfunction_{\mathcal{C}( \PT{y}^*, t )}( \PT{z}^* ) \dd \sigma_d( \PT{z}^* ) \dd t, \qquad \PT{x}^*, \PT{y}^* \in \mathbb{S}^d.
\end{equation*}
The function $\RK_{\mathcal{S}}$ is again symmetric and positive definite and therefore a reproducing kernel which uniquely defines a reproducing kernel Hilbert space $\mathcal{H}(\RK_{\mathcal{S}})$, see \cite{BrDi2013}. The latter also gives the closed form representation
\begin{equation*}
\RK_{\mathcal{S}}( \PT{x}^*, \PT{y}^* ) = 1 - C_d \left\| \PT{x}^* - \PT{y}^* \right\|, \qquad \PT{x}^*, \PT{y}^* \in \mathbb{S}^d,
\end{equation*}
where
\begin{equation} \label{eq:C.d}
C_d \DEF \frac{1}{d} \frac{\omega_{d-1}}{\omega_d} \qquad \text{and} \qquad \frac{\omega_{d-1}}{\omega_d} = \frac{\gammafcn( (d+1)/2 )}{\sqrt{\pi} \, \gammafcn( d/2 )}.
\end{equation}
We remark that
\begin{equation}
\begin{split} \label{eq:W.RK.S}
W( \RK_{\mathcal{S}} )
&\DEF \int_{\mathbb{S}^d} \int_{\mathbb{S}^d} \RK_{\mathcal{S}}( \PT{x}^*, \PT{y}^* ) \dd \sigma_d( \PT{x}^* ) \dd \sigma_d( \PT{y}^* ) = \int_{\mathbb{S}^d} \RK_{\mathcal{S}}( \PT{x}^*, \PT{y}^* ) \dd \sigma_d( \PT{x}^* ) \\
&= 1 - C_d \int_{\mathbb{S}^d} \left\| \PT{x}^* - \PT{y}^* \right\| \dd \sigma_d( \PT{x}^* ) = 1 - C_d \, W( \mathbb{S}^d ), \qquad \PT{y}^* \in \mathbb{S}^d,
\end{split}
\end{equation}
where 
\begin{equation} \label{eq:W.S.d}
W( \mathbb{S}^d ) \DEF \int_{\mathbb{S}^d} \left\| \PT{x}^* - \PT{y}^* \right\| \dd \sigma_d( \PT{x}^* ) = 2^d \frac{\gammafcn((d+1)/2) \gammafcn((d+1)/2)}{\sqrt{\pi} \gammafcn(d + 1/2)}.
\end{equation}

We define now a reproducing kernel Hilbert space on $\mathbb{R}^{d+1}$ with reproducing kernel
\begin{equation} \label{eq:mathcal.K.2nd.form.all}
\begin{split}
\RK(r \PT{x}^*, \rho \PT{y}^*)
&\DEF \RK_{\mathcal{R}}(r, \rho) \, \RK_{\mathcal{S}}( \PT{x}^*, \PT{y}^*) \\
&= \Phi( \min\{ r, \rho \} ) \left( 1 - C_d \left\| \PT{x}^* - \PT{y}^* \right\| \right), \qquad r, \rho \geq 0, \, \PT{x}^*, \PT{y}^* \in \mathbb{S}^d,
\end{split}
\end{equation}
From \cite{Ar1950} we obtain that $\RK$ is a reproducing kernel with corresponding reproducing kernel Hilbert space given by $\mathcal{H}(\RK) = \mathcal{H}(\RK_{\mathcal{R}}) \times \mathcal{H}(\RK_{\mathcal{S}})$, i.e., as the tensor product space of $\mathcal{H}(\RK_{\mathcal{R}})$ and $\mathcal{H}(\RK_{\mathcal{S}})$.
Note that $\RK(\PT{x}, \PT{y}) = 0$ for $\PT{x} = \PT{0}$, which implies that for any function $f \in \mathcal{H}(\RK)$ we have $f(\PT{x}) = 0$, i.e. the functions are anchored at the origin.

Let $\indicatorfunction_{\mathcal{C}( \PT{z}^*, t; R )}$ be the indicator function for  $\mathcal{C}( \PT{z}^*, t; R )$. Since for $\rho \geq 0$ and $\PT{y}^* \in \mathbb{S}^d$
\begin{align*}
\indicatorfunction_{\mathcal{C}( \PT{z}^*, t; R )}( \rho \PT{y}^* )
&= \indicatorfunction_{[R, \infty)}( \rho ) \indicatorfunction_{\mathcal{C}( \PT{z}^*, t )}( \PT{y}^* ) = \indicatorfunction_{[R, \infty)}( \rho ) \indicatorfunction_{[t,1]}( \PT{z}^* \cdot \PT{y}^* ) = \indicatorfunction_{[R,\infty)}( \rho ) \indicatorfunction_{\mathcal{C}( \PT{y}^*, t )}( \PT{z}^* ),
\end{align*}
we have
\begin{equation} \label{eq:mathcal.K.1st.form}
\RK( \PT{x}, \PT{y} ) \DEF \int_0^\infty \int_{-1}^1 \int_{\mathbb{S}^d} \indicatorfunction_{\mathcal{C}( \PT{z}^*, t; R )}( \PT{x} )  \indicatorfunction_{\mathcal{C}( \PT{z}^*, t; R )}( \PT{y} )  \dd \sigma_d( \PT{z}^* ) \dd t \, \phi( R ) \dd R, \quad \PT{x}, \PT{y} \in \mathbb{R}^{d+1}.
\end{equation}
Let $(\cdot, \cdot )_{\RK}$ denote the inner product in the reproducing kernel Hilbert space $\mathcal{H}(\RK)$.

Let us consider functions $U: \mathbb{R}^{d+1} \to \mathbb{R}$ which have an integral representation
\begin{equation} \label{eq:U.normal.form}
U( \PT{x} ) = \int_0^\infty \int_{-1}^1 \int_{\mathbb{S}^d} \indicatorfunction_{\mathcal{C}( \PT{z}^*, t; R )}( \PT{x} ) u( \PT{z}^*, t, R )  \dd \sigma_d( \PT{z}^* ) \dd t \, \phi( R ) \dd R, \qquad \PT{x} \in \mathbb{R}^{d+1},
\end{equation}
where the function $U$ is expressed in terms of a function ${u: \mathbb{S}^d \times [-1,1] \times [0,\infty) \to \mathbb{R}}$ with ${u \in \mathbb{L}_2( \mathbb{S}^d \times [-1,1] \times [0,\infty); \mu_d )}$ and $\dd \mu_d( \PT{z}^*, t, R ) = \dd \sigma_d( \PT{z}^* ) \dd t \, \phi( R ) \dd R$. Every function $\PT{x} \mapsto \RK( \PT{x}, \PT{y} )$, $\PT{y} \in \mathbb{R}^{d+1}$ fixed, is of this type with potential function $( \PT{z}^*, t, R ) \mapsto \indicatorfunction_{\mathcal{C}( \PT{z}^*, t; R )}( \PT{y} )$. The functions of type \eqref{eq:U.normal.form} form a linear function space $\mathcal{U}$ whereon one can define an inner product by means of
\begin{equation} \label{eq:inner.product}
\left( U_1, U_2 \right)_{\RK} \DEF \int_0^\infty \int_{-1}^1 \int_{\mathbb{S}^d} u_1( \PT{z}^*, t, R ) u_2( \PT{z}^*, t, R )  \dd \sigma_d( \PT{z}^* ) \dd t \, \phi( R ) \dd R, \quad U_1, U_2 \in \mathcal{U},
\end{equation}
and a corresponding norm
\begin{equation} \label{eq:norm}
\left\| U \right\|_{\RK} \DEF \left\{ \int_0^\infty \int_{-1}^1 \int_{\mathbb{S}^d} \left| u( \PT{z}^*, t, R ) \right|^2  \dd \sigma_d( \PT{z}^* ) \dd t \, \phi( R ) \dd R \right\}^{1/2}, \qquad U \in \mathcal{U}.
\end{equation}
These definitions yield that for $U \in \mathcal{U}$ with $\| U \|_{\RK} < \infty$,
\begin{equation*}
\left( U, \RK( \PT{\cdot}, \PT{y} ) \right)_{\RK} = \int_0^\infty \int_{-1}^1 \int_{\mathbb{S}^d} u( \PT{z}^*, t, R ) \indicatorfunction_{\mathcal{C}( \PT{z}^*, t; R )}( \PT{y} )  \dd \sigma_d( \PT{z}^* ) \dd t \, \phi( R ) \dd R = U( \PT{y} ), \quad \PT{y} \in \mathbb{R}^{d+1}.
\end{equation*}
We remark that $\RK( \PT{\cdot}, \PT{y} ) \in \mathcal{U}$ for all $\PT{y} \in \mathbb{R}^{d+1}$ and $\| \RK( \PT{\cdot}, \PT{y} ) \|_{\RK}^2 = \RK(\PT{y}, \PT{y}) = \Phi(\|\PT{y}\|) < \infty$.
Hence the uniqueness properties of the reproducing kernel and inner product and norm defined by this kernel imply that all $U \in \mathcal{U}$ with $\| U \|_{\RK} < \infty$ are also in $\mathcal{H}(\RK)$ and the inner product of such functions in $\mathcal{H}(\RK)$ can be written as \eqref{eq:inner.product}.
The reproducing kernel Hilbert space $\mathcal{H}(\RK)$ is then the completion of $\{ U \in \mathcal{U} : ( U, U )_{\RK} < \infty \}$ with respect to \eqref{eq:inner.product}. In fact, we show in Appendix~\ref{appdx:A} that every $f \in \mathcal{H}(\RK)$ has an integral representation \eqref{eq:U.normal.form}.

We make now the following observation, namely, that the reproducing kernel Hilbert spaces $\mathcal{H}(\RK_{\mathcal{S}})$ defined on the sphere $\mathbb{S}^d$ and $\mathcal{H}(\RK_{\mathcal{R}})$ defined on $[0,\infty)$ are isomorphically embedded in $\mathcal{H}(\RK)$. Indeed, let $f \in \mathcal{H}(\RK)$ be the potential function of $u(\PT{z}^*, t, R)$.

First, assume that $u(\PT{z}^*, t, R) = u(\PT{z}^*, t)$ for all $R \geq 0$; that is, $u$ is independent of $R$. Then, for $r \geq 0$ and $\PT{x}^* \in \mathbb{S}^d$ we have
\begin{equation*}
f( r \PT{x}^* ) = \left[ \int_0^\infty \indicatorfunction_{[R,\infty)}( r ) \phi(R) \dd R \right] \left[ \int_{-1}^1 \int_{\mathbb{S}^d} \indicatorfunction_{\mathcal{C}( \PT{z}^*, t)}( \PT{x}^* ) u( \PT{z}^*, t ) \dd \sigma_d( \PT{z}^* ) \dd t \right] = \Phi( r ) \, g( \PT{x}^* ),
\end{equation*}
where the function $g$, given by the second square-bracketed expression, is in $\mathcal{H}( \RK_{\mathcal{S}} )$ as $g$ is the potential function of $u( \PT{z}^*, t)$ (cf. \cite{BrDi2013}). Hence, by \eqref{eq:norm},
\begin{equation} \label{em1}
\left\| f \right\|_{\mathcal{H}(\RK)}^2 = \left[ \int_0^\infty \phi( R ) \dd R \right] \left[ \int_{-1}^1 \int_{\mathbb{S}^d} \left| u( \PT{z}^*, t ) \right|^2 \dd \sigma_d( \PT{z}^* ) \dd t \right] = 1 \times \left\| g \right\|_{\mathcal{H}(\RK_{\mathcal{S}})}^2.
\end{equation}

On the other hand, assume now that $u(\PT{z}^*, t, R) = u( R)$ for all $\PT{z}^* \in \mathbb{S}^d$ and ${t \in [-1,1]}$. Then, for $r \geq 0$ and $\PT{x}^* \in \mathbb{S}^d$ we have
\begin{equation*}
f( r \PT{x}^* ) = \left[ \int_0^\infty \indicatorfunction_{[R,\infty)}( r ) u(R) \phi(R) \dd R \right] \left[ \int_{-1}^1 \int_{\mathbb{S}^d} \indicatorfunction_{\mathcal{C}( \PT{z}^*, t)}( \PT{x}^* ) \dd \sigma_d( \PT{z}^* ) \dd t \right] =  F( r ),
\end{equation*}
where the second square-bracketed expression evaluates to $1$ and the function $F$, given by the first square-bracketed expression, is in $\mathcal{H}( \RK_{\mathcal{R}} )$ as $F$ is the potential function of $u( R )$ (the last statement follows by the same arguments as used in Appendix~\ref{appdx:A}). Again, by \eqref{eq:norm},
\begin{equation} \label{em2}
\left\| f \right\|_{\mathcal{H}(\RK)}^2 = \left[ \int_0^\infty \left| u( R ) \right|^2 \phi( R ) \dd R \right] \left[ \int_{-1}^1 \int_{\mathbb{S}^d} \dd \sigma_d( \PT{z}^* ) \dd t \right] = \left\| F \right\|_{\mathcal{H}(\RK_{\mathcal{R}})}^2 \times 2.
\end{equation}


\subsection{Worst-case error}
\label{subsec:wce}

The \emph{worst-case error} of a QMC method
\begin{equation*}
\numint[X_N](f) = \frac{1}{N} \sum_{j=1}^N f( \PT{x}_j )
\end{equation*}
with node set $X_N = \{ \PT{x}_1, \dots, \PT{x}_N \}$ in $\mathbb{R}^{d+1}$ approximating the integral
\begin{equation*}
\xctint[\psi](f) = \int_{\mathbb{R}^{d+1}} f( \PT{x} ) \, \psi( \PT{x} ) \dd \lambda_{d+1}( \PT{x} )
\end{equation*}
with respect to the probability density function $\psi: \mathbb{R}^{d+1} \to \mathbb{R}$ and the Lebesgue measure $\lambda_{d+1}$ on $\mathbb{R}^{d+1}$ for functions from the unit ball in the reproducing kernel Hilbert space $\mathcal{H}(\RK) \DEF \mathcal{H}( \mathbb{R}^{d+1}; \RK)$ defined by the kernel \eqref{eq:mathcal.K.1st.form} is given by
\begin{equation*}
\wce( \numint[X_N]; \mathcal{H}(\RK) ) \DEF \sup \Big\{ \left| \numint[X_N](f) - \xctint[\psi](f) \right| : f \in \mathcal{H}(\RK), \left\| f \right\|_{\RK} \leq 1 \Big\}.
\end{equation*}

Using the integral representation \eqref{eq:mathcal.K.1st.form}, we have that
\begin{equation*}
\frac{1}{N} \sum_{j=1}^N \RK( \PT{x}, \PT{x}_j ) = \int_0^\infty \int_{-1}^1 \int_{\mathbb{S}^d} \indicatorfunction_{\mathcal{C}( \PT{z}^*, t; R )}( \PT{x} ) \left[ \frac{1}{N} \sum_{j=1}^N \indicatorfunction_{\mathcal{C}( \PT{z}^*, t; R )}( \PT{x}_j ) \right] \dd \sigma_d( \PT{z}^* ) \dd t \, \phi( R ) \dd R
\end{equation*}
and
\begin{equation*}
\begin{split}
&\int_{\mathbb{R}^{d+1}} \RK( \PT{x}, \PT{y} ) \, \psi( \PT{y} ) \dd \lambda_{d+1}( \PT{y} ) \\
&\phantom{equals}= \int_0^\infty \int_{-1}^1 \int_{\mathbb{S}^d} \indicatorfunction_{\mathcal{C}( \PT{z}^*, t; R )}( \PT{x} ) \left[ \int_{\mathbb{R}^{d+1}} \indicatorfunction_{\mathcal{C}( \PT{z}^*, t; R )}( \PT{y} ) \, \psi( \PT{y} ) \dd \lambda_{d+1}( \PT{y} ) \right] \dd \sigma_d( \PT{z}^* ) \dd t \, \phi( R ) \dd R.
\end{split}
\end{equation*}
Thus, the ``representer'' of the numerical integration error of $\numint[X_N](f)$ for $f \in \mathcal{H}(\RK)$,
\begin{subequations}
\begin{align}
\mathcal{R}[X_N]( \PT{x} )
&\DEF \frac{1}{N} \sum_{j=1}^N \RK( \PT{x}, \PT{x}_j ) - \int_{\mathbb{R}^{d+1}} \RK( \PT{x}, \PT{y} ) \, \psi( \PT{y} ) \dd \lambda_{d+1}( \PT{y} ) \label{eq:representer.A} \\
&= \int_0^\infty \int_{-1}^1 \int_{\mathbb{S}^d} \indicatorfunction_{\mathcal{C}( \PT{z}^*, t; R )}( \PT{x} ) \, \delta[X_N]( \PT{z}^*, t, R ) \dd \sigma_d( \PT{z}^* ) \dd t \, \phi( R ) \dd R, \label{eq:representer.B}
\end{align}
\end{subequations}
is of the form \eqref{eq:U.normal.form}, where the potential function is the \emph{local discrepancy function}
\begin{equation} \label{eq:local.discrepancy.function}
\delta[X_N]( \PT{z}^*, t, R ) \DEF \frac{1}{N} \sum_{j=1}^N \indicatorfunction_{\mathcal{C}( \PT{z}^*, t; R )}( \PT{x}_j ) - \int_{\mathbb{R}^{d+1}} \indicatorfunction_{\mathcal{C}( \PT{z}^*, t; R )}( \PT{y} ) \, \psi( \PT{y} ) \dd \lambda_{d+1}( \PT{y} ).
\end{equation}
The name ``representer'' of the numerical integration error is justified because of
\begin{equation}
\numint[X_N](f) - \xctint[\psi](f) = \left( f, \mathcal{R}[X_N] \right)_{\RK}.
\end{equation}
Application of the Cauchy-Schwarz inequality gives the Koksma-Hlawka like inequality
\begin{equation*}
\left| \numint[X_N](f) - \xctint[\psi](f) \right| \leq \left\| f \right\|_{\RK} \left\| \mathcal{R}[X_N] \right\|_{\RK}, \qquad f \in \mathcal{H}(\RK).
\end{equation*}
Equality is assumed for $f \equiv \mathcal{R}[X_N]$; cf., e.g., \cite[Ch.~2]{DiPi2010} and \cite{Hi1998}.

\begin{subequations}
Utilizing \eqref{eq:inner.product} and \eqref{eq:representer.A} and the reproducing property of $\RK$, we obtain
\begin{align}
\left[ \wce( \numint[X_N]; \mathcal{H}(\RK) ) \right]^2
&= \left( \mathcal{R}[X_N], \mathcal{R}[X_N] \right)_{\RK} \notag \\
\begin{split} \label{eq:wce.form.a}
&= \frac{1}{N^2} \mathop{\sum_{i=1}^N \sum_{j=1}^N} \RK( \PT{x}_i, \PT{x}_j ) - \frac{2}{N} \sum_{j=1}^N \int_{\mathbb{R}^{d+1}} \RK( \PT{x}, \PT{x}_j ) \, \psi( \PT{x} ) \dd \lambda_{d+1}( \PT{x} ) \\
&\phantom{=}+ \int_{\mathbb{R}^{d+1}} \int_{\mathbb{R}^{d+1}} \RK( \PT{x}, \PT{y} ) \, \psi( \PT{x} ) \psi( \PT{y} ) \dd \lambda_{d+1}( \PT{x} ) \dd \lambda_{d+1}( \PT{y} ).
\end{split}
\end{align}
Utilizing \eqref{eq:norm} and \eqref{eq:representer.B}, we obtain the ``discrepancy form'' of the squared worst-case error
\begin{equation} \label{eq:wce.form.b}
\begin{split}
\left[ \wce( \numint[X_N]; \mathcal{H}(\RK) ) \right]^2
&= \left\| \mathcal{R}[X_N] \right\|_{\RK}^2 \\
&= \int_0^\infty \int_{-1}^1 \int_{\mathbb{S}^d} \left| \delta[X_N]( \PT{z}^*, t, R ) \right|^2  \dd \sigma_d( \PT{z}^* ) \dd t \, \phi( R ) \dd R.
\end{split}
\end{equation}
\end{subequations}
The last result motivates the definition of the \emph{spherical cone $\mathbb{L}_2$-discrepancy} of an $N$-point configuration $X_N = \{ \PT{x}_1, \dots, \PT{x}_N \} \subseteq \mathbb{R}^{d+1}$,
\begin{equation} \label{eq:L.2.discrepancy}
D_{\mathbb{L}_2}^{\mathrm{SC}}( X_N ) \DEF \left( \int_0^\infty \int_{-1}^1 \int_{\mathbb{S}^d} \left| \delta[X_N]( \PT{z}^*, t, R ) \right|^2  \dd \sigma_d( \PT{z}^* ) \dd t \, \phi( R ) \dd R \right)^{1/2}.
\end{equation}
The right-hand side of \eqref{eq:wce.form.a} does not change when a constant is added to the kernel $\RK$. This enables us to write the worst-case error formula in a more compact way, 
\begin{equation}
\left[ \wce( \numint[X_N]; \mathcal{H}(\RK) ) \right]^2 = \frac{1}{N^2} \mathop{\sum_{i=1}^N \sum_{j=1}^N} \RKmod( \PT{x}_i, \PT{x}_j ) - \frac{2}{N} \sum_{j=1}^N \int_{\mathbb{R}^{d+1}} \RKmod( \PT{x}, \PT{x}_j ) \, \psi( \PT{x} ) \dd \lambda_{d+1}( \PT{x} ),
\end{equation}
where $\RKmod: \mathbb{R}^{d+1} \times \mathbb{R}^{d+1} \to \mathbb{R}$ is defined by
\begin{equation} \label{eq:reproducing.kernel.modified}
\RKmod( \PT{x}, \PT{y} ) \DEF \RK( \PT{x}, \PT{y} ) - W(\RK), \qquad \PT{x}, \PT{y} \in \mathbb{R}^{d+1},
\end{equation}
and
\begin{equation} \label{eq:W.reproducing.kernel.K}
W(\RK) \DEF \int_{\mathbb{R}^{d+1}} \int_{\mathbb{R}^{d+1}} \RK( \PT{x}, \PT{y} ) \, \psi( \PT{x} ) \psi( \PT{y} ) \dd \lambda_{d+1}( \PT{x} ) \dd \lambda_{d+1}( \PT{y} ).
\end{equation}
In the following the use of the calligraphic symbol for the kernel $\RK$ is reserved to indicate the subtraction of the constant $W(\RK)$ from $\RK$.

We summarize these observations in the following theorem.

\begin{thm} \label{thm:wce.form.A}
Let $\mathcal{H}(\RK)$ be the Hilbert space uniquely defined by the reproducing kernel \eqref{eq:mathcal.K.1st.form} with closed form \eqref{eq:mathcal.K.2nd.form.all}. Then for a method $\numint[X_N]$ with node set $X_N = \{ \PT{x}_1, \dots, \PT{x}_N \} \subseteq \mathbb{R}^{d+1}$,
\begin{align*}
\wce( \numint[X_N]; \mathcal{H}(\RK) )
&= \Bigg( \frac{1}{N^2} \mathop{\sum_{i=1}^N \sum_{j=1}^N} \RKmod( \PT{x}_i, \PT{x}_j ) - \frac{2}{N} \sum_{j=1}^N \int_{\mathbb{R}^{d+1}} \RKmod( \PT{x}, \PT{x}_j ) \, \psi( \PT{x} ) \dd \lambda_{d+1}( \PT{x} ) \Bigg)^{1/2} \\
&= D_{\mathbb{L}_2}^{\mathrm{SC}}( X_N ),
\end{align*}
where the spherical cone $\mathbb{L}_2$-discrepancy is defined in \eqref{eq:L.2.discrepancy}.
\end{thm}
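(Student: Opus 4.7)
The plan is to proceed in four short consolidation steps, since both of the key identities \eqref{eq:wce.form.a} and \eqref{eq:wce.form.b} have in effect been derived in the discussion immediately preceding the theorem. The theorem statement merges the ``kernel form'' and the ``discrepancy form'' of the squared worst-case error and then rewrites the former in the compact two-term shape using $\RKmod = \RK - W(\RK)$.

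\textbf{Step 1 (worst-case error equals representer norm).} I would first verify that $\wce(\numint[X_N]; \mathcal{H}(\RK)) = \lVert \mathcal{R}[X_N] \rVert_{\RK}$. By the reproducing property applied to $\mathcal{R}[X_N] \in \mathcal{H}(\RK)$, one has $\numint[X_N](f) - \xctint[\psi](f) = (f, \mathcal{R}[X_N])_{\RK}$; the Cauchy--Schwarz inequality then gives the Koksma--Hlawka bound, with equality attained by $f = \mathcal{R}[X_N]/\lVert \mathcal{R}[X_N]\rVert_{\RK}$, which lies in the unit ball of $\mathcal{H}(\RK)$.

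\textbf{Step 2 (three-term kernel form).} I would then expand $\lVert \mathcal{R}[X_N]\rVert_{\RK}^2 = (\mathcal{R}[X_N], \mathcal{R}[X_N])_{\RK}$ by bilinearity, using the definition \eqref{eq:representer.A} together with the reproducing identity $(\RK(\cdot, \PT{x}_i), \RK(\cdot, \PT{x}_j))_{\RK} = \RK(\PT{x}_i, \PT{x}_j)$ and a Fubini interchange justified by $W(\RK) < \infty$. This reproduces formula \eqref{eq:wce.form.a}.

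\textbf{Step 3 (compactification via $\RKmod$).} Next I would substitute $\RKmod(\PT{x}, \PT{y}) = \RK(\PT{x}, \PT{y}) - W(\RK)$ into the stated two-term expression and check that the constant contributions collapse correctly: the double sum contributes $-W(\RK)$ and the single sum contributes $+2W(\RK)$, which together with the two surviving $\RK$-terms exactly reconstruct the three-term right-hand side of \eqref{eq:wce.form.a}. Conceptually this is the statement that the constant $W(\RK)$ is integrated exactly by $\numint[X_N]$ and by $\xctint[\psi]$, so its addition to the kernel does not affect the worst-case error.

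\textbf{Step 4 (discrepancy form).} Finally, since $\mathcal{R}[X_N]$ admits the integral representation \eqref{eq:representer.B} of the type \eqref{eq:U.normal.form} with potential function $\delta[X_N](\PT{z}^*, t, R)$, evaluation of the intrinsic norm \eqref{eq:norm} immediately yields \eqref{eq:wce.form.b}, which is precisely $[D_{\mathbb{L}_2}^{\mathrm{SC}}(X_N)]^2$ by \eqref{eq:L.2.discrepancy}. Combining Steps 1--4 gives both equalities claimed in the theorem.

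The main obstacle is essentially bookkeeping rather than substance: one must keep track of the constants when passing from $\RK$ to $\RKmod$ and verify that $\mathcal{R}[X_N] \in \mathcal{U}$ with $\lVert \mathcal{R}[X_N]\rVert_{\RK} < \infty$ so that the inner-product expansion and the identification with the $\mathbb{L}_2$-norm of $\delta[X_N]$ are both legitimate. Both finiteness statements reduce to $W(\RK) < \infty$, which in turn follows from the closed form \eqref{eq:mathcal.K.2nd.form.all} together with $\Phi \leq 1$ and $\lVert \PT{x}^* - \PT{y}^*\rVert \leq 2$ on $\mathbb{S}^d$.
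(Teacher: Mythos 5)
Your proposal is correct and mirrors the paper's own derivation, which is carried out in the running text of Section~\ref{subsec:wce} and then merely ``summarized'' as Theorem~\ref{thm:wce.form.A}: Step~1 is the representer/Cauchy--Schwarz argument around \eqref{eq:representer.A}--\eqref{eq:representer.B}, Step~2 is \eqref{eq:wce.form.a}, Step~4 is \eqref{eq:wce.form.b} via \eqref{eq:norm}, and Step~3 matches the paper's observation that \eqref{eq:wce.form.a} is invariant under adding a constant to $\RK$ so that $W(\RK)$ can be absorbed into $\RKmod$ (you compute the cancellation $-W(\RK)+2W(\RK)=+W(\RK)$ explicitly, which is a slightly more concrete rendering of the same point, and your normalization $\mathcal{R}[X_N]/\lVert\mathcal{R}[X_N]\rVert_{\RK}$ is a small precision improvement over the paper's ``$f\equiv\mathcal{R}[X_N]$''). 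Your remark that the Fubini interchange and membership of $\mathcal{R}[X_N]$ in $\mathcal{U}$ reduce to $W(\RK)<\infty$, itself immediate from $\Phi\le1$ and the boundedness of $\RK_{\mathcal{S}}$, is a harmless tightening of the paper's implicit use of these facts.
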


From \eqref{eq:wce.form.a} it follows that an $N$-point configuration $X_N^*$ that minimizes
\begin{equation} \label{eq:external.field.problem}
\frac{1}{N^2} \mathop{\sum_{i=1}^N \sum_{j=1}^N} \RK( \PT{x}_i, \PT{x}_j ) - \frac{2}{N} \sum_{j=1}^N \int_{\mathbb{R}^{d+1}} \RK( \PT{x}, \PT{x}_j ) \, \psi( \PT{x} ) \dd \lambda_{d+1}( \PT{x} )
\end{equation}
has smallest worst-case error $\wce( \numint[X_N^*]; \mathcal{H}(\RK) )$ and, by \eqref{eq:wce.form.b}, smallest \emph{spherical cone $\mathbb{L}_2$-discrepancy} $D_{\mathbb{L}_2}^{\mathrm{SC}}( X_N^* )$. 
The kernel $\RK( \PT{x}, \PT{y} )$ has the representation \eqref{eq:mathcal.K.2nd.form.all}.
The expression \eqref{eq:external.field.problem} can be interpreted as the ``energy'' of the nodes $\PT{x}_1, \dots, \PT{x}_N$ subject to an external field
\begin{equation*}
\mathcal{Q}( \PT{y} ) \DEF - \int_{\mathbb{R}^{d+1}} \RK( \PT{x}, \PT{y} ) \, \psi( \PT{x} ) \dd \lambda_{d+1}( \PT{x} ), \qquad \PT{y} \in \mathbb{R}^{d+1},
\end{equation*}
which prevents the nodes from escaping to infinity. (Indeed, by definition of the kernel $\RK$ (see \eqref{eq:mathcal.K.2nd.form.all}) the contribution to \eqref{eq:external.field.problem} (``point energy'') of a point $\PT{x}_{j_0}$ tends to $0$ as $\|\PT{x}_{j_0}\| \to \infty$. On the other hand, the worst-case error goes to $0$ as $N \to \infty$ only if the energy \eqref{eq:external.field.problem} becomes negative in order to compensate the positive double integral in \eqref{eq:wce.form.a}.)


A standard probabilistic argument yields the following result for the root mean square error of a QMC method for a \emph{typical} $N$-point node set.
\begin{thm} \label{thm:rms.wce.form.A}
Let $\mathcal{H}(\RK)$ be the Hilbert space uniquely defined by the reproducing kernel \eqref{eq:mathcal.K.1st.form} with closed form \eqref{eq:mathcal.K.2nd.form.all}. Then
\begin{equation*}
\sqrt{\mathbb{E}\big[ \{ \wce( \numint[\{ \PT{y}_1, \dots, \PT{y}_N \}]; \mathcal{H}(\RK) ) \}^2 \big]} = \frac{1}{\sqrt{N}} \left( \int_{\mathbb{R}^{d+1}} \Phi( \| \PT{x} \| ) \, \psi( \PT{x} ) \dd \lambda_{d+1}( \PT{x} ) - W( \RK ) \right)^{1/2},
\end{equation*}
where the points $\PT{y}_1, \dots, \PT{y}_N$ are independently and identically $\psi \lambda_{d+1}$-distributed in $\mathbb{R}^{d+1}$.
\end{thm}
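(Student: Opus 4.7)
The plan is to start from the explicit worst-case error formula \eqref{eq:wce.form.a} and take the expectation term by term, exploiting independence and the closed form $\RK(\PT{x},\PT{x}) = \Phi(\|\PT{x}\|)$ that follows from \eqref{eq:mathcal.K.2nd.form.all} (since $\RK_{\mathcal{S}}(\PT{x}^*,\PT{x}^*) = 1$ and $\RK_{\mathcal{R}}(r,r) = \Phi(r)$).

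First, I would split the double sum in \eqref{eq:wce.form.a} into its diagonal and off-diagonal contributions. By i.i.d.\ sampling from $\psi\lambda_{d+1}$, the $N$ diagonal terms each contribute expectation
\[
\E[\RK(\PT{y}_j,\PT{y}_j)] = \int_{\mathbb{R}^{d+1}} \Phi(\|\PT{x}\|)\,\psi(\PT{x})\,\dd\lambda_{d+1}(\PT{x}),
\]
while each of the $N(N-1)$ off-diagonal terms satisfies $\E[\RK(\PT{y}_i,\PT{y}_j)] = W(\RK)$ by Fubini and the definition \eqref{eq:W.reproducing.kernel.K}. Similarly, the linear term gives $\E\left[\int \RK(\PT{x},\PT{y}_j)\psi(\PT{x})\,\dd\lambda_{d+1}(\PT{x})\right] = W(\RK)$ for each $j$.

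Assembling these into \eqref{eq:wce.form.a} produces
\[
\E\!\left[\{\wce(\numint[\{\PT{y}_1,\dots,\PT{y}_N\}];\mathcal{H}(\RK))\}^2\right] = \frac{1}{N}\int_{\mathbb{R}^{d+1}}\Phi(\|\PT{x}\|)\,\psi(\PT{x})\,\dd\lambda_{d+1}(\PT{x}) + \frac{N-1}{N}W(\RK) - 2W(\RK) + W(\RK),
\]
and the last three $W(\RK)$ terms collapse to $-W(\RK)/N$, yielding the stated formula after taking the square root.

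There is no real obstacle here; the only care needed is to verify that the expectations and the double integrals may be exchanged. For this one observes that the integrand $\RK$ is nonnegative (both factors in \eqref{eq:mathcal.K.2nd.form.all} are) and that the quantity $\int \Phi(\|\PT{x}\|)\psi(\PT{x})\,\dd\lambda_{d+1}(\PT{x})$ is bounded by $1$ because $\Phi \leq 1$, so Tonelli's theorem applies and the bracketed expression on the right-hand side is nonnegative, making the square root well defined.
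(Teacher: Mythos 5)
Your proof is correct and follows essentially the same route as the paper: split the double sum into diagonal and off-diagonal parts, take expectations term by term using i.i.d.\ sampling, and collect the $W(\RK)$ contributions. The only cosmetic difference is that you work directly with $\RK$ via \eqref{eq:wce.form.a} and cancel the $W(\RK)$ terms at the end, whereas the paper works with the centered kernel $\RKmod$ from Theorem~\ref{thm:wce.form.A}, which makes the off-diagonal and double-integral contributions vanish immediately; the two are algebraically equivalent.
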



\begin{proof}
Consider the product probability measure
\begin{equation*}
\eta( \PT{y}_1, \PT{y}_2, \dots, \PT{y}_N) \DEF \psi( \PT{y}_1 ) \lambda_{d+1}( \PT{y}_1 ) \psi( \PT{y}_2 ) \lambda_{d+1}( \PT{y}_2 ) \cdots \psi( \PT{y}_N ) \lambda_{d+1}( \PT{y}_N ).
\end{equation*}
By Theorem~\ref{thm:wce.form.A}, the expected value of the squared worst-case error is
\begin{align*}
&\mathbb{E}\big[ \{ \wce( \numint[\{ \PT{y}_1, \dots, \PT{y}_N \}]; \mathcal{H}(\RK) ) \}^2 \big] \\
&\phantom{equals}= \mathop{\int \cdots \int}_{\mathbb{R}^{d+1} \times \cdots \times \mathbb{R}^{d+1}} \{ \wce( \numint[\{ \PT{y}_1, \dots, \PT{y}_N \}]; \mathcal{H}(\RK) ) \}^2 \dd \eta( \PT{y}_1, \PT{y}_2, \dots, \PT{y}_N) \\
&\phantom{equals}= \mathop{\int \cdots \int}_{\mathbb{R}^{d+1} \times \cdots \times \mathbb{R}^{d+1}} \Bigg[ \frac{1}{N^2} \sum_{j=1}^N \RKmod( \PT{y}_j, \PT{y}_j ) + \frac{1}{N^2} \mathop{\sum_{i=1}^N \sum_{j=1}^N}_{i \neq j} \RKmod( \PT{y}_i, \PT{y}_j ) \Bigg] \dd \eta( \PT{y}_1, \PT{y}_2, \dots, \PT{y}_N) \\
&\phantom{equals=}- \frac{2}{N} \sum_{j=1}^N \mathop{\int \cdots \int}_{\mathbb{R}^{d+1} \times \cdots \times \mathbb{R}^{d+1}} \int_{\mathbb{R}^{d+1}} \RKmod( \PT{x}, \PT{y}_j ) \, \psi( \PT{x} ) \dd \lambda_{d+1}( \PT{x} ) \dd \eta( \PT{y}_1, \PT{y}_2, \dots, \PT{y}_N).
\end{align*}
The measure $\eta$ is the product of the probability measures $\psi( \PT{y}_j ) \lambda_{d+1}( \PT{y}_j )$, $1 \leq j \leq N$. Hence
\begin{align*}
&\mathbb{E}\big[ \{ \wce( \numint[\{ \PT{y}_1, \dots, \PT{y}_N \}]; \mathcal{H}(\RK) ) \}^2 \big] = \frac{1}{N} \int_{\mathbb{R}^{d+1}} \RKmod( \PT{y}, \PT{y} ) \psi( \PT{y} ) \dd \lambda_{d+1}( \PT{y} ) \\
&\phantom{equals=}+ \left[ \frac{N \left( N - 1 \right)}{N^2} - \frac{2N}{N} \right] \int_{\mathbb{R}^{d+1}} \int_{\mathbb{R}^{d+1}} \RKmod( \PT{x}, \PT{y} ) \, \psi( \PT{x} ) \psi( \PT{y} ) \dd \lambda_{d+1}( \PT{x} ) \dd \lambda_{d+1}( \PT{y} ).
\end{align*}
The double integral above is zero by definition of $\RKmod$, cf. \eqref{eq:reproducing.kernel.modified}. Therefore
\begin{equation*}
\mathbb{E}\big[ \{ \wce( \numint[\{ \PT{y}_1, \dots, \PT{y}_N \}]; \mathcal{H}(\RK) ) \}^2 \big] = \frac{1}{N} \int_{\mathbb{R}^{d+1}} \RKmod( \PT{y}, \PT{y} ) \psi( \PT{y} ) \dd \lambda_{d+1}( \PT{y} ).
\end{equation*}
The result follows from (see \eqref{eq:mathcal.K.2nd.form.all} and \eqref{eq:reproducing.kernel.modified}) $\RKmod( \PT{y}, \PT{y} ) = \Phi( \| \PT{y} \| ) - W(\RK)$ for $\| \PT{y} \| \geq 0$.
\end{proof}

\subsection{A lower bound for the worst-case error}

For our Hilbert spaces $\mathcal{H} = \mathcal{H}( \RK )$, $\mathcal{H}( \RK_{\mathcal{R}} )$, $\mathcal{H}( \RK_{\mathcal{S}} )$ let
$\minwce(\mathcal{H}; N)$ denote the infimum of the worst-case error of numerical integration when extended over all integration algorithms that use $N$ function evaluations.

Recall that the reproducing kernel Hilbert spaces $\mathcal{H}(\RK_{\mathcal{S}})$ and $\mathcal{H}(\RK_{\mathcal{R}})$ are both isomorphically embedded in $\mathcal{H}(\RK)$ with the constants given in \eqref{em1} and \eqref{em2}. This implies that
\begin{equation*}
\minwce(\mathcal{H}(\RK); N) \geq \max\{ \minwce(\mathcal{H}(\RK_{\mathcal{S}}); N), \sqrt{2} \minwce(\mathcal{H}(\RK_{\mathcal{R}}); N) \}.
\end{equation*}

The Hilbert space $\mathcal{H}(\RK_{\mathcal{S}})$ can be identified with a certain Sobolev space of smoothness $s=(d+1)/2$ (cf. \cite{BrDi2013}) and for such spaces \cite{He2006,HeSl2005b} obtained optimal lower bounds for the worst-case error of order $N^{-s/d}$; i.e., 
%
there is a constant $C > 0$ independent of $N$ such that
\begin{equation*}
\minwce(\mathcal{H}(\RK_{\mathcal{S}}); N) \ge C N^{-1/2 - 1/(2d)} \quad \mbox{for all } N \ge 1.
\end{equation*}
Thus we obtain the following theorem.
\begin{thm} \label{thm:wce.lower.bound}
There is a constant $c_d > 0$ which depends only on $d$, such that the minimal worst-case error for integration in $\mathcal{H}(\RK)$ is bounded by
\begin{equation*}
\minwce(\mathcal{H}(\RK); N) \geq c_d N^{-1/2 - 1/(2d)} \quad \mbox{for all } N \ge 1.
\end{equation*}
\end{thm}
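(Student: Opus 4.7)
The plan is to follow the reduction already sketched in the excerpt: invoke the isometric embedding of $\mathcal{H}(\RK_{\mathcal{S}})$ into $\mathcal{H}(\RK)$ from identity \eqref{em1} to transfer the known lower bound of Hesse and Hesse--Sloan \cite{He2006,HeSl2005b} for numerical integration in $\mathcal{H}(\RK_{\mathcal{S}})$---which, being equivalent to a Sobolev space of smoothness $s=(d+1)/2$ on $\mathbb{S}^d$ (cf.\ \cite{BrDi2013}), satisfies $\minwce(\mathcal{H}(\RK_{\mathcal{S}}); N) \ge C N^{-1/2-1/(2d)}$---to the larger space $\mathcal{H}(\RK)$. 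The conclusion will follow from the inequality
\begin{equation*}
\minwce(\mathcal{H}(\RK); N) \ge \minwce(\mathcal{H}(\RK_{\mathcal{S}}); N)
\end{equation*}
asserted in the excerpt, which is the step I would justify carefully below.

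To prove this inequality I would take the isometry $T : \mathcal{H}(\RK_{\mathcal{S}}) \to \mathcal{H}(\RK)$ defined by $(Tg)(r\PT{x}^*) := \Phi(r) g(\PT{x}^*)$ and show that any $N$-point algorithm $A(f) = \sum_{j=1}^N w_j f(\PT{x}_j)$ on $\mathcal{H}(\RK)$ induces an $N$-point linear algorithm for a spherical integration problem. Writing $\PT{x}_j = r_j \PT{x}_j^*$, evaluation of $A$ at $Tg$ gives $\tilde A(g) := A(Tg) = \sum_j w_j \Phi(r_j) g(\PT{x}_j^*)$, a valid $N$-evaluation linear rule on $\mathbb{S}^d$. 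Polar decomposition of the Lebesgue measure yields
\begin{equation*}
\xctint[\psi](Tg) = \int_{\mathbb{S}^d} g(\PT{x}^*) \, h(\PT{x}^*) \dd \sigma_d(\PT{x}^*), \qquad h(\PT{x}^*) := \omega_d \int_0^\infty \Phi(r) \, \psi(r \PT{x}^*) \, r^d \dd r.
\end{equation*}
Since $T$ is an isometry, the worst-case error of $A$ over the image of the unit ball of $\mathcal{H}(\RK_{\mathcal{S}})$ coincides with the worst-case error of $\tilde A$ for this weighted integration problem on the sphere; taking the infimum over $A$ supplies the desired reduction, provided this weighted problem still obeys the Hesse--Sloan lower bound.

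The main technical obstacle is precisely the treatment of the weight $h$. In the rotationally invariant case (e.g.\ a standard normal $\psi$), $h$ reduces to a positive constant $\alpha$, the rescaled rule $\tilde A / \alpha$ is a bona fide algorithm for unweighted $\sigma_d$-integration on $\mathbb{S}^d$, and the Hesse--Sloan bound delivers the estimate with $c_d := \alpha C$. In the general case, under the standing integrability assumptions on $\phi$ and $\psi$ the function $h$ is continuous, bounded, and uniformly positive on $\mathbb{S}^d$; one then compares the $\mathbb{L}_2(\mathbb{S}^d; h \dd \sigma_d)$-based worst-case error with the unweighted one by factoring out $\inf_{\mathbb{S}^d} h$ and $\sup_{\mathbb{S}^d} h$, which absorbs only a constant depending on $d$, $\phi$ and $\psi$ into $c_d$. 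Once this comparison is established, the known sphere bound propagates to $\mathcal{H}(\RK)$ and the theorem follows.
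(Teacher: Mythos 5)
Your proposal follows the same route as the paper: embed $\mathcal{H}(\RK_{\mathcal{S}})$ into $\mathcal{H}(\RK)$ via \eqref{em1} and transfer the Hesse--Sloan lower bound of order $N^{-1/2-1/(2d)}$ for the Sobolev space $\mathbb{H}^{(d+1)/2}(\mathbb{S}^d)$. In fact you work this out in considerably more detail than the paper does: the paper only asserts $\minwce(\mathcal{H}(\RK);N)\ge\minwce(\mathcal{H}(\RK_{\mathcal{S}});N)$ as an immediate consequence of the embedding, whereas you correctly exhibit the induced rule $\tilde A(g)=\sum_j w_j\,\Phi(r_j)\,g(\PT{x}_j^*)$ and observe that under the isometry $T$ the target functional becomes the \emph{weighted} spherical integral $\int_{\mathbb{S}^d}g\,h\,\dd\sigma_d$. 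When $\psi$ is isotropic---which is the setting the paper actually works in from Theorem~\ref{thm:wce.form.B} onward---$h$ is the constant $\alpha=\int_0^\infty\Phi(r)\psidensity(r)\,\dd r\in(0,1)$, and your rescaling $\tilde A/\alpha$ cleanly yields $\minwce(\mathcal{H}(\RK);N)\ge\alpha\,\minwce(\mathcal{H}(\RK_{\mathcal{S}});N)$. (This shows, incidentally, that the paper's stated constant $1$ is slightly optimistic, and that $c_d$ secretly depends on $\phi$ and $\psi$ through $\alpha$, not only on $d$.)

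The one genuinely shaky step is your treatment of non-constant $h$: ``factoring out $\inf h$ and $\sup h$'' does not by itself compare minimal worst-case errors for two different integration functionals. If you simply replace $I_h(g)=\int g\,h\,\dd\sigma_d$ by $h_{\min}\int g\,\dd\sigma_d$ inside the supremum you lose control of the sign, so the displayed inequality between the two worst-case errors does not follow from pointwise bounds on $h$ alone. What rescues the argument is a structural feature of the Hesse--Sloan proof: the fooling function $g$ it produces is \emph{non-negative}, has $\|g\|_{\mathcal{H}(\RK_{\mathcal{S}})}\le1$, vanishes at all $N$ sphere nodes (hence $\tilde A(g)=0$ for \emph{any} weights), and satisfies $\int g\,\dd\sigma_d\ge C\,N^{-1/2-1/(2d)}$. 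For such $g$ one has $|\tilde A(g)-I_h(g)|=\int g\,h\,\dd\sigma_d\ge h_{\min}\int g\,\dd\sigma_d$, and the lower bound transfers with constant $h_{\min}C$. You should either invoke this explicitly, or---simpler, and fully consistent with the paper's scope---restrict the theorem to isotropic $\psi$, where $h\equiv\alpha$ and no comparison of weighted problems is needed. Finally, note that the infimum in $\minwce$ runs over \emph{all} $N$-evaluation algorithms, not just linear ones; the reduction to linear rules is standard (Bakhvalov/Smolyak) but worth a word, since both the paper and your write-up silently assume it.
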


\subsection{Isotropic weight (or density) function $\psi( \PT{x} )$}

From here on we assume that the probability density function $\psi( \PT{x} )$ in the exact integral $\xctint[\psi]$ is isotropic; i.e., a radial function
\begin{equation} \label{eq:psi.isotropic}
\psi( \PT{x} ) = h( \| \PT{x} \| ), \qquad \PT{x} \in \mathbb{R}^{d+1},
\end{equation}
for some function $h:[0,\infty) \to [0,\infty)$, such that after a change to spherical coordinates,
\begin{equation} \label{eq:psi.isotropic.normalization}
1 = \int_{\mathbb{R}^{d+1}} \psi( \PT{x} ) \, \dd \lambda_{d+1}( \PT{x} ) = \int_0^{\infty} \psidensity( r ) \, \dd r.
\end{equation}
Examples of such probability density functions will be considered in Section~\ref{sec:examples.A}.
%
We define
\begin{equation} \label{eq:psi.isotropic.density.and.cdf}
\psidensity(r) \DEF \omega_d \, h(r) \, r^d, \quad r \geq 0, \qquad \Psi( \rho ) \DEF \int_0^\rho \psidensity(r) \dd r, \quad \rho \geq 0,
\end{equation}
and
\begin{equation}
W( \RK_{\mathcal{R}}, \psidensity ) \DEF \int_0^\infty \int_0^\infty \Phi( \min\{ r, \rho \} ) \, \psidensity( r )  \psidensity( \rho ) \dd r \dd \rho.
\end{equation}

\begin{thm} \label{thm:wce.form.B}
Let $\mathcal{H}( \RK )$ be the Hilbert space uniquely defined by the reproducing kernel \eqref{eq:mathcal.K.1st.form} with closed form \eqref{eq:mathcal.K.2nd.form.all} and the density $\psi$ be isotropic satisfying \eqref{eq:psi.isotropic} and \eqref{eq:psi.isotropic.normalization}. Suppose \eqref{eq:psi.isotropic.density.and.cdf}. Then for a method $\numint[X_N]$ with node set $X_N = \{ \PT{x}_1, \dots, \PT{x}_N \} \subseteq \mathbb{R}^{d+1}$,
\begin{equation*}
\begin{split}
&\wce( \numint[X_N]; \mathcal{H}(\RK) )
= \Bigg( \frac{1}{N^2} \mathop{\sum_{i=1}^N \sum_{j=1}^N} \RKmod( \PT{x}_i, \PT{x}_j ) \\
&\phantom{equals=}- \frac{2}{N} \sum_{j=1}^N \Bigg[ \Phi( \| \PT{x}_j \| ) \, \int_{\| \PT{x}_j \|}^\infty \psidensity( r ) \dd r + \int_0^{\| \PT{x}_j \|}  \Phi( r ) \psidensity( r ) \dd r -  W( \RK_{\mathcal{R}}, \psidensity ) \Bigg] W( \RK_{\mathcal{S}} ) \Bigg)^{1/2}. 
\end{split}
\end{equation*}
\end{thm}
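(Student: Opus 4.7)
The plan is to start from the first displayed expression in Theorem~\ref{thm:wce.form.A}, namely
\[
\wce(\numint[X_N];\mathcal{H}(\RK))^2 = \frac{1}{N^2}\sum_{i,j}\RKmod(\PT{x}_i,\PT{x}_j) - \frac{2}{N}\sum_{j=1}^{N}\int_{\mathbb{R}^{d+1}}\RKmod(\PT{x},\PT{x}_j)\,\psi(\PT{x})\dd\lambda_{d+1}(\PT{x}),
\]
so the only thing to do is to rewrite the single integral using the isotropy hypothesis \eqref{eq:psi.isotropic}. Because $\RKmod = \RK - W(\RK)$, it suffices to compute both $\int_{\mathbb{R}^{d+1}}\RK(\PT{x},\PT{x}_j)\psi(\PT{x})\dd\lambda_{d+1}(\PT{x})$ and $W(\RK)$ in closed form and then combine.

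For the first integral I would pass to spherical coordinates $\PT{x}=r\PT{x}^*$ with $\dd\lambda_{d+1}(\PT{x}) = \omega_d\, r^d\dd\sigma_d(\PT{x}^*)\dd r$, write $\PT{x}_j = \|\PT{x}_j\|\PT{x}_j^*$, and use the tensor product form \eqref{eq:mathcal.K.2nd.form.all} to split
\[
\RK(r\PT{x}^*,\|\PT{x}_j\|\PT{x}_j^*) = \Phi(\min\{r,\|\PT{x}_j\|\})\bigl(1-C_d\|\PT{x}^*-\PT{x}_j^*\|\bigr).
\]
By rotational invariance of $\sigma_d$ the inner spherical integral $\int_{\mathbb{S}^d}(1-C_d\|\PT{x}^*-\PT{x}_j^*\|)\dd\sigma_d(\PT{x}^*)$ is independent of $\PT{x}_j^*$ and equals $1 - C_d W(\mathbb{S}^d) = W(\RK_\mathcal{S})$ by \eqref{eq:W.RK.S}. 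The remaining radial factor absorbs $\omega_d h(r)r^d = \psidensity(r)$, yielding
\[
\int_{\mathbb{R}^{d+1}}\RK(\PT{x},\PT{x}_j)\psi(\PT{x})\dd\lambda_{d+1}(\PT{x}) = W(\RK_\mathcal{S})\int_0^\infty \Phi(\min\{r,\|\PT{x}_j\|\})\psidensity(r)\dd r.
\]
I would then split the integration domain at $r=\|\PT{x}_j\|$, using $\Phi(\min\{r,\|\PT{x}_j\|\}) = \Phi(r)$ for $r\leq\|\PT{x}_j\|$ and $\Phi(\|\PT{x}_j\|)$ for $r>\|\PT{x}_j\|$, giving exactly the two terms
\[
\int_0^{\|\PT{x}_j\|}\Phi(r)\psidensity(r)\dd r + \Phi(\|\PT{x}_j\|)\int_{\|\PT{x}_j\|}^{\infty}\psidensity(r)\dd r
\]
that appear in the statement.

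An entirely analogous computation for the double integral defining $W(\RK)$ in \eqref{eq:W.reproducing.kernel.K}---apply isotropy in both variables, factor out $W(\RK_\mathcal{S})$, and recognize the resulting radial double integral---gives $W(\RK) = W(\RK_\mathcal{S})\cdot W(\RK_\mathcal{R},\psidensity)$. Subtracting this from the previous expression produces the bracket in the theorem, multiplied by $W(\RK_\mathcal{S})$, and plugging everything back into the Theorem~\ref{thm:wce.form.A} formula and taking the square root finishes the proof.

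There is no real obstacle here; the argument is essentially a bookkeeping exercise built on two facts: (i) the kernel factorizes as a product of a radial and a spherical kernel, and (ii) isotropy of $\psi$ together with rotational invariance of $\sigma_d$ decouples the radial and angular integrations so that the angular part produces the universal constant $W(\RK_\mathcal{S})$. The only point requiring slight care is the correct splitting of $\min\{r,\|\PT{x}_j\|\}$ and keeping track of the normalization $\omega_d h(r)r^d = \psidensity(r)$ when passing between $\psi$ on $\mathbb{R}^{d+1}$ and $\psidensity$ on $[0,\infty)$.
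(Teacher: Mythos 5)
Your proposal is correct and follows essentially the same route as the paper's proof: both pass to spherical coordinates, use the tensor-product factorization of the kernel together with the rotational invariance of $\sigma_d$ to pull out the constant $W(\RK_{\mathcal{S}})$, split the radial integral of $\Phi(\min\{r,\rho\})$ at $r=\rho$, and identify $W(\RK)=W(\RK_{\mathcal{R}},\psidensity)\,W(\RK_{\mathcal{S}})$ before substituting into Theorem~\ref{thm:wce.form.A}.
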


\begin{proof}
Let $\rho = \| \PT{y} \|$.
A change to spherical coordinates, \eqref{eq:mathcal.K.2nd.form.all}, \eqref{eq:W.RK.S} and \eqref{eq:reproducing.kernel.modified} yields
\begin{align}
\int_{\mathbb{R}^{d+1}} \RK( \PT{x}, \PT{y} ) \, \psi( \PT{x} ) \, \dd \lambda_{d+1}( \PT{x} )
&= \int_0^\infty \int_{\mathbb{S}^d} \RK_{\mathcal{S}}( \PT{x}^*, \PT{y}^* ) \Phi( \min\{ r, \rho \} ) \dd \sigma_d( \PT{x}^* ) \psidensity( r ) \dd r \notag \\
&= \left[ \int_0^\infty \Phi( \min\{ r, \rho \} ) \, \psidensity( r )  \dd r \right] \left[ \int_{\mathbb{S}^d} \RK_{\mathcal{S}}( \PT{x}^*, \PT{y}^* ) \dd \sigma_d( \PT{x}^* ) \right] \notag \\
&= \left[ \Phi( \rho ) \,\int_\rho^\infty \psidensity( r ) \dd r + \int_0^\rho \Phi( r ) \psidensity( r ) \dd r \right] W( \RK_{\mathcal{S}} ) \label{eq:single.integral.of.RK}
\end{align}
%
and (recall \eqref{eq:W.reproducing.kernel.K})
\begin{align}
W(\RK)
&= \left[ \int_0^\infty \int_0^\infty \Phi( \min\{ r, \rho \} ) \, \psidensity( r )  \psidensity( \rho ) \dd r \dd \rho \right] \left[ \int_{\mathbb{S}^d} \RK_{\mathcal{S}}( \PT{x}^*, \PT{y}^* ) \dd \sigma_d( \PT{x}^* ) \right] \notag \\
&= W( \RK_{\mathcal{R}}, \psidensity ) W( \RK_{\mathcal{S}} ). \label{eq:W.K.factorization}
\end{align}
Substitution into the worst-case error formula in Theorem~\ref{thm:wce.form.A} gives the desired result.
\end{proof}

It can be easily seen that
\begin{align}
W( \RK_{\mathcal{R}}, \psidensity )
&= 2 \int_0^\infty \Phi(\rho) \left\{ \int_\rho^\infty \psidensity( r ) \dd r \right\} \psidensity( \rho ) \dd\rho \label{eq:double.int.isotropic} \\
&= 1 - 2 \int_0^\infty \left( 1 - \Phi(\rho) \right) \left\{ \int_\rho^\infty \psidensity( r ) \dd r \right\} \psidensity( \rho ) \dd\rho. \label{eq:double.int.isotropic.B}
\end{align}

For further references we record the following consequence of the proof of Theorem~\ref{thm:wce.form.B},
\begin{equation} \label{eq:mean.zero.identity}
\int_{\mathbb{R}^{d+1}} \Bigg[ \Phi( \| \PT{y} \| ) \, \int_{\| \PT{y} \|}^\infty \psidensity( r ) \dd r + \int_0^{\| \PT{y} \|}  \Phi( r ) \psidensity( r ) \dd r -  W( \RK_{\mathcal{R}}, \psidensity ) \Bigg] \psi( \PT{y} ) \dd \lambda_{d+1}( \PT{y} ) = 0.
\end{equation}



Following our strategy to have a pre-scribed point set $\{ \PT{y}_1^*, \dots, \PT{y}_N^* \} \subseteq \mathbb{S}^d$, we find suitable radii $\rho_1, \dots, \rho_N$ by choosing them at random. The appropiate probability model is imposed by the (radial) probability density function $\psi$. A ``typical'' $N$-point set $\{ \rho_1 \PT{y}_1^*, \dots, \rho_N \PT{y}_N^* \}$ in $\mathbb{R}^{d+1}$ obeying this model will have a worst-case error as follows.

\begin{thm} \label{thm:rms.wce.form.B}
Under the assumptions of Theorem~\ref{thm:wce.form.B},
\begin{equation}
\begin{split} \label{eq:rms.wce.form.B}
&\mathbb{E}\big[ \{ \wce( \numint[\{ \rho_1 \PT{y}_1^*, \dots, \rho_N \PT{y}_N^* \} ]; \mathcal{H}(\RK) ) \}^2 \big] \\
&\phantom{equ}= \frac{1}{N} \left[ \int_0^\infty \Phi( \rho ) \psidensity( \rho ) \dd \rho - W( \RK_{\mathcal{R}}, \psidensity ) \right] + W( \RK_{\mathcal{R}}, \psidensity ) \left[ \frac{1}{N^2} \mathop{\sum_{i=1}^N \sum_{j=1}^N} \RKmod_{\mathcal{S}}( \PT{y}_i^*, \PT{y}_j^* ) \right].
\end{split}
\end{equation}
where $\PT{y}_1^*, \dots, \PT{y}_N^* \in \mathbb{S}^{d}$ are fixed and the radii $\rho_1, \dots, \rho_N$ are independently and identically $\psidensity \lambda_{1}$-distributed.
\end{thm}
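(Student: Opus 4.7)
The plan is to apply the formula from Theorem~\ref{thm:wce.form.B} to the random node set $\{\rho_1\PT{y}_1^*,\dots,\rho_N\PT{y}_N^*\}$ and compute the expectation over the product of $N$ independent $\psidensity\lambda_1$-distributed copies of $\rho$. Two reductions do most of the work. First, the single sum in Theorem~\ref{thm:wce.form.B} depends on $\PT{x}_j=\rho_j\PT{y}_j^*$ only through $\|\PT{x}_j\|=\rho_j$, so its expectation with respect to $\rho_j$ reduces to a one-dimensional integral against $\psidensity$; after a change to spherical coordinates this is exactly the integral in \eqref{eq:mean.zero.identity}, which vanishes. Hence only the double sum contributes to $\E[\wce^2]$.

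For the double sum I separate diagonal ($i=j$) from off-diagonal ($i\ne j$) pairs. Using the product form of the kernel \eqref{eq:mathcal.K.2nd.form.all} together with the factorization \eqref{eq:W.K.factorization}, $W(\RK)=W(\RK_{\mathcal{R}},\psidensity)W(\RK_{\mathcal{S}})$, I obtain
\begin{equation*}
\E[\RKmod(\rho_i\PT{y}_i^*,\rho_j\PT{y}_j^*)] = \E[\Phi(\min\{\rho_i,\rho_j\})]\,\RK_{\mathcal{S}}(\PT{y}_i^*,\PT{y}_j^*) - W(\RK).
\end{equation*}
For $i\ne j$, independence of $\rho_i$ and $\rho_j$ gives $\E[\Phi(\min\{\rho_i,\rho_j\})]=W(\RK_{\mathcal{R}},\psidensity)$ directly from the definition of the latter, so the off-diagonal expectation equals $W(\RK_{\mathcal{R}},\psidensity)\,\RKmod_{\mathcal{S}}(\PT{y}_i^*,\PT{y}_j^*)$. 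For $i=j$, a single expectation produces the diagonal value $\int_0^\infty\Phi(\rho)\psidensity(\rho)\,\dd\rho - W(\RK)$.

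The key step, and the only one requiring care, is to rewrite each diagonal contribution in a form that completes the sphere-discrepancy double sum. Using $\RKmod_{\mathcal{S}}(\PT{y}_i^*,\PT{y}_i^*)=1-W(\RK_{\mathcal{S}})$ and the factorization of $W(\RK)$, a short calculation yields the identity
\begin{equation*}
\int_0^\infty\Phi(\rho)\psidensity(\rho)\,\dd\rho - W(\RK) = W(\RK_{\mathcal{R}},\psidensity)\,\RKmod_{\mathcal{S}}(\PT{y}_i^*,\PT{y}_i^*) + \left[\int_0^\infty\Phi(\rho)\psidensity(\rho)\,\dd\rho - W(\RK_{\mathcal{R}},\psidensity)\right].
\end{equation*}
Substituting this back, the $N^2$ terms of the form $W(\RK_{\mathcal{R}},\psidensity)\,\RKmod_{\mathcal{S}}(\PT{y}_i^*,\PT{y}_j^*)$ arising from both diagonal and off-diagonal pairs assemble into the announced sphere-discrepancy quadratic form, while the $N$ residues on the diagonal, each weighted by $1/N^2$, collapse to the stated $\frac{1}{N}\bigl[\int_0^\infty\Phi(\rho)\psidensity(\rho)\,\dd\rho - W(\RK_{\mathcal{R}},\psidensity)\bigr]$ term, yielding \eqref{eq:rms.wce.form.B}. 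The main pitfall I anticipate is purely this bookkeeping: recognizing that the ``wrong'' subtracted constant $W(\RK)$ on the diagonal must be split as $W(\RK_{\mathcal{R}},\psidensity)W(\RK_{\mathcal{S}})$ so that one piece merges with the off-diagonal sphere quadratic form and the other piece supplies the $1/N$ correction. No analytic difficulties arise; the argument otherwise parallels the proof of Theorem~\ref{thm:rms.wce.form.A}.
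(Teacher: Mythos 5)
Your proposal is correct and follows the paper's argument step for step: apply Theorem~\ref{thm:wce.form.B} to the random node set, kill the single-sum contribution via \eqref{eq:mean.zero.identity}, split the double sum into diagonal and off-diagonal parts, use the factorization $W(\RK)=W(\RK_{\mathcal{R}},\psidensity)W(\RK_{\mathcal{S}})$ from \eqref{eq:W.K.factorization}, and rearrange. The one place the paper is terse (``Rearrangement of terms gives the desired result'') you make explicit with the identity that absorbs the diagonal $\RKmod_{\mathcal{S}}(\PT{y}_i^*,\PT{y}_i^*)=1-W(\RK_{\mathcal{S}})$ contribution into the full sphere quadratic form while swapping $W(\RK)$ for $W(\RK_{\mathcal{R}},\psidensity)$ in the $1/N$ term; this is exactly the right bookkeeping.
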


\begin{proof}
Let $\PT{y}_1^*, \dots, \PT{y}_N^* \in \mathbb{S}^{d}$ be fixed.
By assumption, the product measure
\begin{equation*}
\eta(\rho_1, \dots, \rho_N) \DEF \psidensity(\rho_1) \lambda_1(\rho_1) \psidensity(\rho_2) \lambda_1(\rho_2) \cdots \psidensity(\rho_N) \lambda_1(\rho_N),
\end{equation*}
formed by the probability measure $\psidensity(\rho) \lambda_1(\rho)$ supported on the interval $(0,\infty)$ is itself a probability measure. Hence, by Theorem~\ref{thm:wce.form.B},
\begin{align*}
&\mathbb{E}\big[ \{ \wce( \numint[\{ \rho_1 \PT{y}_1^*, \dots, \rho_N \PT{y}_N^* \}]; \mathcal{H}(\RK) ) \}^2 \big] \\
&\phantom{equals}= \mathop{\int \cdots \int}_{\mathbb{R} \times \cdots \times \mathbb{R}} \{ \wce( \numint[\{ \rho_1 \PT{y}_1^*, \dots, \rho_N \PT{y}_N^* \}]; \mathcal{H}(\RK) ) \}^2 \dd \eta( \rho_1, \dots, \rho_N) \\
&\phantom{equals}= \mathop{\int \cdots \int}_{\mathbb{R} \times \cdots \times \mathbb{R}} \Bigg[ \frac{1}{N^2} \sum_{j=1}^N \RKmod( \rho_j \PT{y}_j^*, \rho_j \PT{y}_j^* ) + \frac{1}{N^2} \mathop{\sum_{i=1}^N \sum_{j=1}^N}_{i \neq j} \RKmod( \rho_i \PT{y}_i^*, \rho_j \PT{y}_j^* ) \Bigg] \dd \eta( \rho_1, \dots, \rho_N).
\end{align*}
Note that the integral over the single sum in the worst-case error formula vanishes by \eqref{eq:mean.zero.identity}.
Since (using $\RKmod( \PT{y}, \PT{y} ) = \Phi( \| \PT{y} \| ) - W( \RK )$ by \eqref{eq:mathcal.K.2nd.form.all} and \eqref{eq:reproducing.kernel.modified})
\begin{align*}
&\mathop{\int \cdots \int}_{\mathbb{R} \times \cdots \times \mathbb{R}} \RKmod( \rho_j \PT{y}_j^*, \rho_j \PT{y}_j^* ) \dd \eta( \rho_1, \dots, \rho_N) = \int_0^\infty \Phi( \rho ) \psidensity( \rho ) \dd \rho - W( \RK )
\end{align*}
and also (cf. \eqref{eq:W.K.factorization} and \eqref{eq:double.int.isotropic.B})
\begin{align*}
&\mathop{\int \cdots \int}_{\mathbb{R} \times \cdots \times \mathbb{R}} \RKmod( \rho_i \PT{y}_i^*, \rho_j \PT{y}_j^* ) \dd \eta( \rho_1, \dots, \rho_N) \\
&\phantom{equals}= \left[ 1 - C_d \left\| \PT{y}_i^* - \PT{y}_j^* \right\| \right] \int_0^\infty \int_0^\infty \Phi( \min\{ \rho_i, \rho_j \} ) \psidensity( \rho_i )  \psidensity( \rho_j ) \dd \lambda_1(\rho_i) \dd \lambda_1(\rho_j) - W( \RK ) \\
&\phantom{equals}= \left[ 1 - C_d \left\| \PT{y}_i^* - \PT{y}_j^* \right\| \right] W( \RK_{\mathcal{R}}, \psidensity ) - W( \RK_{\mathcal{S}} ) W( \RK_{\mathcal{R}}, \psidensity ) = W( \RK_{\mathcal{R}}, \psidensity ) \RKmod_{\mathcal{S}}( \PT{y}_i^*, \PT{y}_j^* ),
\end{align*} 
it follows that
\begin{align*}
&\mathbb{E}\big[ \{ \wce( \numint[\{ \rho_1 \PT{y}_1^*, \dots, \rho_N \PT{y}_N^* \} ]; \mathcal{H}(\RK) ) \}^2 \big] \\
&\phantom{equals}= \frac{1}{N} \left[ \int_0^\infty \Phi( \rho ) \psidensity( \rho ) \dd \rho - W( \RK ) \right] + W( \RK_{\mathcal{R}}, \psidensity ) \Bigg[ \frac{1}{N^2} \mathop{\sum_{i=1}^N \sum_{j=1}^N}_{i \neq j} \RKmod_{\mathcal{S}}( \PT{y}_i^*, \PT{y}_j^* ) \Bigg].
\end{align*}
Rearrangement of terms gives the desired result.
\end{proof}

The expected value formula in \eqref{eq:rms.wce.form.B} has two components. The first one is related to the randomly chosen radii and is of order $N^{-1}$. The second quantity measures the quality of the $N$-point set $Y_N^* = \{ \PT{y}_1^*, \dots, \PT{y}_N^* \}$. It is the worst-case integration error of the QMC methods with these $N$ nodes for functions in the unit ball in the Sobolev space $\mathbb{H}^{(d+1)/2}( \mathbb{S}^d )$ provided with the reproducing kernel $\RK_{\mathcal{S}}$ (cf. \cite{BrDi2013}). This worst-case error satisfies the relations
\begin{align}
\left[ \wce( \numint[Y_N^*]; \mathcal{H}( \RK_{\mathcal{S}} ) ) \right]^2
&= \frac{1}{N^2} \mathop{\sum_{i=1}^N \sum_{j=1}^N} \RKmod_{\mathcal{S}}( \PT{y}_i^*, \PT{y}_j^* ) = C_d \left[ W( \mathbb{S}^d ) - \frac{1}{N^2} \mathop{\sum_{i=1}^N \sum_{j=1}^N} \left\| \PT{y}_i^* - \PT{y}_j^* \right\| \right] \notag \\
&= \left[ D_{\mathbb{L}_2}^{\mathrm{C}}( Y_N^* ) \right]^2 \DEF \int_{-1}^1 \int_{\mathbb{S}^d} \left| \delta[X_N]( \PT{z}^*, t, 1 ) \right|^2 \dd \sigma_d( \PT{z}^* ) \dd t, \label{eq:wce.Sobolev.sphere}
\end{align}
where $D_{\mathbb{L}_2}^{\mathrm{C}}( Y_N^* )$ is the spherical cap $\mathbb{L}_2$-discrepancy of $Y_N^*$. It is known that $N$-point configurations on $\mathbb{S}^d$ that maximize the sum of all mutual distances (and thus have minimal worst-case error and minimal spherical cap $\mathbb{L}_2$-discrepancy) achieve optimal convergence order $N^{-1/2 - 1/(2d)}$. Such sequences are one example of QMC design sequences for $\mathbb{H}^{(d+1)/2}( \mathbb{S}^d )$ (cf. \cite{BrSaSlWo2013}). So-called \emph{low-discrepancy sequences} on $\mathbb{S}^d$ allow order $\sqrt{\log N} N^{-1/2 - 1/(2d)}$.

Evidently, there is a gap between the order of the lower bound $N^{-1/2-1/(2d)}$ (Theorem~\ref{thm:wce.lower.bound}) and what would be achievable on average by  random selection processes (Theorems~\ref{thm:rms.wce.form.A} and \ref{thm:rms.wce.form.B}). In \cite{BrSaSlWo2013} it is observed that a compartmentalized random selection of points on the sphere improves the decay of the mean square worst-case error. We follow the same stratifying approach here.
Consider the following partition of unity
\begin{align*}
1
&= \int_{\mathbb{R}^{d+1}} \psi( \PT{x} ) \, \dd \lambda_{d+1}( \PT{x} ) = \int_0^\infty \int_{\mathbb{S}^d} \psidensity( r ) \dd \sigma_d \dd r \\
&= \left[ \int_{\mathbb{S}^d} \dd \sigma_d  \right] \left[ \int_0^\infty \psidensity( r ) \dd r \right] = \left[ \sum_{m=1}^M \int_{D_{m,M}} \dd \sigma_d  \right] \left[ \sum_{k=1}^K \int_{\rho_{k-1}}^{\rho_k} \psidensity( r ) \dd r \right].
\end{align*}
We require that $D_{1,M}, \dots, D_{M,M}$ form an equal area partition of $\mathbb{S}^d$ into $M$ subsets and $0 = \rho_0 < \rho_1 < \cdots < \rho_K = \infty$ are such that $\int_{\rho_{k-1}}^{\rho_k} \psidensity( r ) \dd r = 1 / K$ for all $k = 1, \dots, K$. This defines a partition of $\mathbb{R}^{d+1}$ into $N = M K$ parts of equal mass (probability) $1 / N$ given by
\begin{equation*}
A_{m,k}^{(M,K)} \DEF \left\{ \rho \PT{x}^* \in \mathbb{R}^{d+1} : \PT{x}^* \in D_{m,M}, \rho \in (\rho_{k-1}, \rho_k) \right\}, \qquad 1 \leq m \leq M, 1 \leq k \leq K.
\end{equation*}
Such a partition we call \emph{small-diameter equal mass partition} if the sets $D_{1,M}, \dots, D_{M,M}$ satisfy for some positive constant $c$ independent of $j$ and $M$ the small-diameter constraints
\begin{equation*}
\diam D_{j,M} \DEF \sup \big\{ \left\| \PT{x} - \PT{y} \right\| : \PT{x}, \PT{y} \in D_{j,M} \} \leq \frac{c}{M^{1/d}}, \qquad j = 1, \dots, M, M \geq 2;
\end{equation*}
that is, the diameter bound is at the same scale as the well-separation distance of $M$ points on $\mathbb{S}^d$.


\begin{thm} \label{thm:compartmentalized.B}
Under the assumptions of Theorem~\ref{thm:wce.form.B}, let $(A_{m,k}^{(M,K)})$ be a small-diameter equal mass partition of $\mathbb{R}^{d+1}$ into $N = M K$ parts of equal mass $1 / N$.
Then
\begin{equation} \label{eq:compartmentalized.B}
\begin{split}
&\mathbb{E}\Bigg[ \sup_{\substack{f \in \mathcal{H}(\RK), \\ \| f \|_{\RK} \leq 1}} \left| \frac{1}{M K} \sum_{m=1}^M \sum_{k=1}^K f( \PT{y}_{m,k}^{(M,K)} ) - \int_{\mathbb{R}^{d+1}} f( \PT{y} ) \psi( \PT{y} ) \dd \lambda_{d+1}( \PT{y} ) \right|^2 \Bigg] \\
&\phantom{=}= \frac{1}{M K} \left[ \frac{1}{K} \sum_{k=1}^K \left( \int_{\rho_{k-1}}^{\rho_k} \Phi( r ) \frac{\psidensity( r ) \dd r}{1/K} - \int_{\rho_{k-1}}^{\rho_k} \int_{\rho_{k-1}}^{\rho_k} \Phi( \min\{ r, \rho \} ) \frac{\psidensity( r ) \dd r}{1/K} \frac{\psidensity( \rho ) \dd \rho}{1/K} \right) \right] \\
&\phantom{equals=}+ \frac{C_d}{M K} \left[ \frac{1}{K} \sum_{k=1}^K \int_{\rho_{k-1}}^{\rho_k} \int_{\rho_{k-1}}^{\rho_k} \Phi( \min\{ r, \rho \} ) \frac{\psidensity( r ) \dd r}{1/K} \frac{\psidensity( \rho ) \dd \rho}{1/K} \right] \\
&\phantom{equals=\pm \frac{C_d}{M K} }\times \left[ \frac{1}{M} \sum_{m=1}^M \int_{D_{m,M}} \int_{D_{m,M}} \left\| \PT{x}^* - \PT{y}^* \right\| \frac{\dd \sigma_d( \PT{x}^* )}{1/M} \frac{\dd \sigma_d( \PT{y}^* )}{1/M} \right],
\end{split}
\end{equation}
where $\PT{y}_{m,k}^{(M,K)}$ is chosen randomly from $A_{m,k}^{(M,K)}$ with respect to the probability measure $\eta_{m,k}^{(M,K)}$ induced by the density function $\psi$ (i.e., $\dd \eta_{m,k}^{(M,K)}( \rho \PT{y}^* ) = K M \dd \sigma_d|_{D_{m,M}}( \PT{y}^* ) \psidensity( \rho ) \dd \rho |_{[\rho_{k-1}, \rho_k)}$).
\end{thm}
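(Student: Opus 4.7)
The plan is to take the expectation of the compact worst-case error formula from Theorem~\ref{thm:wce.form.A},
\begin{equation*}
\wce(\numint[X_N]; \mathcal{H}(\RK))^2 = \frac{1}{N^2}\sum_{i,j=1}^N \RKmod(\PT{x}_i, \PT{x}_j) - \frac{2}{N}\sum_{j=1}^N \int_{\mathbb{R}^{d+1}} \RKmod(\PT{x}, \PT{x}_j)\,\psi(\PT{x})\,\dd\lambda_{d+1}(\PT{x}),
\end{equation*}
over the stratified sample $\{\PT{y}_{m,k}^{(M,K)}\}$. Since $A_{m,k}^{(M,K)}$ has $\psi$-mass $1/N$, each $\PT{y}_{m,k}^{(M,K)}$ has conditional density $N\psi|_{A_{m,k}^{(M,K)}}$, so the stratified identity $\mathbb{E}\bigl[\tfrac{1}{N}\sum_{m,k} g(\PT{y}_{m,k}^{(M,K)})\bigr] = \int g\,\psi\,\dd\lambda_{d+1}$ holds \emph{exactly} for every measurable $g$, and independence across strata will allow off-diagonal expectations to be computed as products of conditional integrals.

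Applying the stratified identity to $g(\PT{y}) = \int \RKmod(\PT{x},\PT{y})\psi(\PT{x})\dd\lambda_{d+1}(\PT{x})$ kills the linear term, because $\int\!\int \RKmod\,\psi\,\psi\,\dd\lambda\,\dd\lambda = 0$ by the very definition of $\RKmod$. Applied instead to the diagonal quadratic term and combined with $\RKmod(\PT{y},\PT{y}) = \Phi(\|\PT{y}\|) - W(\RK)$ and isotropy, it produces
\begin{equation*}
\frac{1}{N^2}\sum_{j}\mathbb{E}\bigl[\RKmod(\PT{y}_j,\PT{y}_j)\bigr] = \frac{1}{N}\Bigl(\int_0^\infty \Phi(r)\psidensity(r)\dd r - W(\RK)\Bigr).
\end{equation*}
For the off-diagonal contribution, independence combined with the vanishing \emph{global} double integral of $\RKmod$ gives
\begin{equation*}
\frac{1}{N^2}\sum_{i\neq j}\mathbb{E}\bigl[\RKmod(\PT{y}_i,\PT{y}_j)\bigr] = -\sum_{m,k}\int_{A_{m,k}^{(M,K)}}\!\!\int_{A_{m,k}^{(M,K)}}\RKmod(\PT{x},\PT{y})\,\psi(\PT{x})\,\psi(\PT{y})\,\dd\lambda\,\dd\lambda.
\end{equation*}

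The remaining task is to evaluate these within-stratum double integrals using the tensor-product closed form \eqref{eq:mathcal.K.2nd.form.all}. Writing $\PT{x} = r\PT{x}^*$, $\PT{y} = \rho\PT{y}^*$, and $\psi(\PT{x})\dd\lambda_{d+1}(\PT{x}) = \psidensity(r)\dd r\,\dd\sigma_d(\PT{x}^*)$, together with $\sigma_d(D_{m,M}) = 1/M$ and $\int_{\rho_{k-1}}^{\rho_k}\psidensity(r)\dd r = 1/K$, the radial factor $\Phi(\min\{r,\rho\})$ separates from the angular factor $1 - C_d\|\PT{x}^* - \PT{y}^*\|$, while the $W(\RK)$ contribution from the strata sums to $W(\RK)/N$, exactly cancelling the $W(\RK)$ term from the diagonal. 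With the shorthands $I_k := \int_{\rho_{k-1}}^{\rho_k}\!\int_{\rho_{k-1}}^{\rho_k}\Phi(\min\{r,\rho\})\psidensity(r)\psidensity(\rho)\dd r\,\dd\rho$ and $J_m := \int_{D_{m,M}}\!\int_{D_{m,M}}\|\PT{x}^* - \PT{y}^*\|\dd\sigma_d\,\dd\sigma_d$, the residual collapses to $\tfrac{1}{N}\int_0^\infty\Phi(r)\psidensity(r)\dd r - \tfrac{1}{M}\sum_k I_k + C_d\bigl(\sum_k I_k\bigr)\bigl(\sum_m J_m\bigr)$.

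The main obstacle is the last bookkeeping step: verifying the $W(\RK)$ cancellation cleanly and rewriting the three surviving terms in the normalized product-and-difference form \eqref{eq:compartmentalized.B} by inserting the factors $1/K$ and $1/M$ under the respective sums. I note that the small-diameter assumption on $D_{1,M},\dots,D_{M,M}$ plays no role in establishing this identity; it will only be needed afterwards when the right-hand side of \eqref{eq:compartmentalized.B} is to be bounded in concrete applications.
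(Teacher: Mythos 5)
Your proposal is correct and follows essentially the same route as the paper's proof: expand $\mathbb{E}[\wce^2]$ using independence across strata, note that the linear term in $\RKmod$ vanishes (the paper invokes \eqref{eq:mean.zero.identity}, you invoke $\iint \RKmod\,\psi\,\psi\,\dd\lambda\,\dd\lambda = 0$, which amounts to the same thing), reduce the quadratic double sum to the diagonal minus the within-stratum double integrals, and then exploit the tensor-product structure of the kernel and of the conditional measures, together with $\sigma_d(D_{m,M})=1/M$ and $\int_{\rho_{k-1}}^{\rho_k}\psidensity\,\dd r=1/K$, to factor and cancel $W(\RK)$. Your side observation that the small-diameter constraint plays no role in establishing the exact identity \eqref{eq:compartmentalized.B} — only in the subsequent asymptotic bounds — is accurate and a useful clarification that the paper mentions only obliquely as a remark inside the proof.
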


\begin{proof}
Fix  $M$ and $K$. We simplify the notation by dropping the dependence on $M$ and $K$. Let
\begin{equation*}
\dd \eta( \PT{y}_{1,1}, \dots, \PT{y}_{M,K} ) \DEF \prod_{m=1}^M \prod_{k=1}^K \dd \eta_{m,k}( \PT{y}_{m,k} )
\end{equation*}
define the probability product measure formed by probability measures supported on the sets $A_{1,1}, \dots, A_{M,K}$. Using Theorem~\ref{thm:wce.form.B} and proceeding as in the proof of Theorem~\ref{thm:rms.wce.form.B}, we get
\begin{align*}
&\mathbb{E}[ \{ \wce( \numint[Y_{M K}]; \mathcal{H}(\RK) ) \}^2 ] = \mathop{\int \cdots \int}_{A_{1,1} \times \cdots \times A_{M,K}} \{ \wce( \numint[\{ \PT{y}_{1,1}, \dots, \PT{y}_{M,K} \}]; \mathcal{H}(\RK) ) \}^2 \dd \eta( \PT{y}_{1,1}, \dots, \PT{y}_{M,K} ) \\
&\phantom{e}= \mathop{\int \cdots \int}_{A_{1,1} \times \cdots \times A_{M,K}} \Bigg[ \frac{1}{M^2 K^2} \mathop{\sum_{m=1}^M \sum_{k=1}^K} \RKmod( \PT{y}_{m,k}, \PT{y}_{m,k} ) \\
&\phantom{e=\pm}+ \frac{1}{M^2 K^2} \mathop{\sum_{m=1}^M \sum_{k=1}^K \sum_{m^\prime=1}^M \sum_{k^\prime=1}^K}_{(m,k) \neq (m^\prime,k^\prime)} \RKmod( \PT{y}_{m,k}, \PT{y}_{m^\prime,k^\prime} ) \Bigg] \dd \eta( \PT{y}_{1,1}, \dots, \PT{y}_{M,K} ) \\
&\phantom{e=}- \frac{2 W( \RK_{\mathcal{S}} )}{M K} \mathop{\sum_{m=1}^M \sum_{k=1}^K} \int_{A_{m,k}} \Bigg[ \Phi( \rho ) \,\int_{\rho}^\infty  \psidensity( r ) \dd r + \int_0^{\rho} \Phi( r ) \psidensity( r ) \dd r - W( \RK_{\mathcal{R}}, \psidensity ) \Bigg] \dd \eta_{m,k}( \rho \PT{y}^* ).
\end{align*}
The right-most double sum vanishes as can be seen by reversing the partition of unity and using \eqref{eq:mean.zero.identity}. After interchanging summation and integration, the completed quadruple sum like-wise vanishes. These observations give the simpler form
\begin{equation*}
\begin{split}
\mathbb{E}[ \{ \wce( \numint[Y_{M K}]; \mathcal{H}(\RK) ) \}^2 ]
&= \frac{1}{M^2 K^2} \mathop{\sum_{m=1}^M \sum_{k=1}^K} \int_{A_{m,k}} \RKmod( \PT{y}, \PT{y} ) \dd \eta_{m,k}( \PT{y} ) \\
&\phantom{=}- \frac{1}{M^2 K^2} \mathop{\sum_{m=1}^M \sum_{k=1}^K} \int_{A_{m,k}} \int_{A_{m,k}} \RKmod( \PT{x}, \PT{y} ) \dd \eta_{m,k}( \PT{x} ) \dd \eta_{m,k}( \PT{y} ).
\end{split}
\end{equation*}
Making use of the product forms of \eqref{eq:mathcal.K.2nd.form.all} and probability measures $\eta_{m,k}$ and \eqref{eq:reproducing.kernel.modified}, we have
\begin{equation*}
\int_{A_{m,k}} \RKmod( \PT{y}, \PT{y} ) \dd \eta_{m,k}( \PT{y} ) = \int_{\rho_{k-1}}^{\rho_k} \Phi( r ) \frac{\psidensity( r ) \dd r}{1/K} - W( \RK )
\end{equation*}
and
\begin{equation*}
\begin{split}
&\int_{A_{m,k}} \int_{A_{m,k}} \RKmod( \PT{x}, \PT{y} ) \dd \eta_{m,k}( \PT{x} ) \dd \eta_{m,k}( \PT{y} ) = \left[ \int_{\rho_{k-1}}^{\rho_k} \int_{\rho_{k-1}}^{\rho_k} \Phi( \min\{ r, \rho \} ) \frac{\psidensity( r ) \dd r}{1/K} \frac{\psidensity( \rho ) \dd \rho}{1/K} \right] \\
&\phantom{equals=\pm}\times \left[ \int_{D_{m,M}} \int_{D_{m,M}} \left( 1 - C_d \left\| \PT{x}^* - \PT{y}^* \right\| \right) \frac{\dd \sigma_d( \PT{x}^* )}{1/M} \frac{\dd \sigma_d( \PT{y}^* )}{1/M} \right] - W( \RK ).
\end{split}
\end{equation*}
We observe that the second square-bracketed expression above tends to $1$ as $M \to \infty$ because of $\| \PT{x}^* - \PT{y}^* \| \leq \diam D_{m,M} \leq c / M^{1/d}$ for $\PT{x}^*, \PT{y}^* \in D_{m,M}$ and $\sigma_d( D_{m,M} ) = 1/M$.
Hence splitting up this expression and substitution into the last formula for the expected value gives, after some straightforward rearrangement of terms, the result.
\end{proof}

\begin{rmks}
\begin{inparaenum}[\bf\itshape \upshape(A\upshape)]
\item \label{rmks.A} The small diameter constraints on $D_{1,M}, \dots, D_{M,M}$ imply that
\begin{equation} \label{eq:small.diameter.constraints.consequence}
\frac{c^\prime}{M^{1/d}} \leq \int_{D_{m,M}} \int_{D_{m,M}} \left\| \PT{x}^* - \PT{y}^* \right\| \frac{\dd \sigma_d( \PT{x}^* )}{1/M} \frac{\dd \sigma_d( \PT{y}^* )}{1/M} \leq  \diam D_{m,M} \leq \frac{c}{M^{1/d}}.
\end{equation}
Thus, one has optimal order $M^{-1-1/d}$ in the second part of the right-hand side of \eqref{eq:compartmentalized.B}.
(The lower bound follows from an argument in the proof of \cite[Theorem~25]{BrSaSlWo2013}.)
\item The right-hand side of \eqref{eq:compartmentalized.B} is at least of order $N^{-1}$ as all the square-bracketed expressions are bounded (integration with respect to probability measures); also cf. Theorems~\ref{thm:rms.wce.form.A} and \ref{thm:rms.wce.form.B}.
\item Note that
\begin{equation*}
\begin{split}
G_{k,K}
&\DEF \int_{\rho_{k-1}}^{\rho_k} \Phi( r ) \frac{\psidensity( r ) \dd r}{1/K} - \int_{\rho_{k-1}}^{\rho_k} \int_{\rho_{k-1}}^{\rho_k} \Phi( \min\{ r, \rho \} ) \frac{\psidensity( r ) \dd r}{1/K} \frac{\psidensity( \rho ) \dd \rho}{1/K} \\
&= \int_{\rho_{k-1}}^{\rho_k} \int_{\rho_{k-1}}^{\rho_k} \left[ \Phi( r ) - \Phi( \min\{ r, \rho \} ) \right] \frac{\psidensity( r ) \dd r}{1/K} \frac{\psidensity( \rho ) \dd \rho}{1/K} \geq 0,
\end{split}
\end{equation*}
since $\Phi(r)$ is a cdf. Hence both contributions to the expected value in \eqref{eq:compartmentalized.B} are non-negative.
\item Application of the first mean value theorem for integration yields
\begin{align*}
\frac{1}{K} \sum_{k=1}^K G_{k,K}
&= \frac{1}{K} \sum_{k=1}^K \left[ \Phi( \rho_k^{**} ) - \Phi( \rho_k^{*} ) \right] \leq \frac{1}{K} \sum_{k=1}^K \left[ \Phi( \rho_k ) - \Phi( \rho_{k-1} ) \right] \\
&= \frac{\Phi( \rho_K ) - \Phi( \rho_0 )}{K} = \frac{\Phi( \infty ) - \Phi( 0 )}{K} = \frac{1}{K},
\end{align*}
where $\rho_{k-1} < \rho_k^{*} < \rho_k$ and $\rho_{k-1} < \rho_k^{**} < \rho_k$ for $k = 1, \dots, K$, but also $\rho_k^{*} \leq \rho_k^{**}$ by non-negativity of $G_{k,K}$ (see previous item). The estimate leading to the telescope sum and the evaluations follow from the fact that $\Phi(r)$ is a cdf.
\item Thus, the improvement in the order of convergence gained by compartmentalization depends on how fast the convergence is in
\begin{equation*} 
\frac{1}{K} \sum_{k=1}^K \int_{\rho_{k-1}}^{\rho_k} \int_{\rho_{k-1}}^{\rho_k} \Phi( \min\{ r, \rho \} ) \frac{\psidensity( r ) \dd r}{1/K} \frac{\psidensity( \rho ) \dd \rho}{1/K} \to \int_{0}^{\infty} \Phi( r ) \psidensity( r ) \dd r \quad \text{as $K \to \infty$,}
\end{equation*}
subject to the requirement that the numbers $0 = \rho_0 < \rho_1 < \cdots < \rho_{K-1} < \rho_K = \infty$ satisfy
\begin{equation} \label{eq:rho.k.conditions}
\int_0^{\rho_k} \psidensity( r ) \dd r = \frac{k}{K}, \qquad k = 0, 1, \dots, K-1, K.
\end{equation}
For a qualitative estimate we may assume a rate of convergence of $K^{-\beta} g(K)$ ($\beta \geq 1$), where $g(K)$ may not grow faster than any power of $K$. The convergence rates of both contributions to the right-hand side of \eqref{eq:compartmentalized.B} are matched when $K$ is of order $M^{\frac{1}{\beta}\frac{1}{d}}$. This in turn would imply that an upper bound of the expected value \eqref{eq:compartmentalized.B} has order $g( N^{\frac{1}{1+\beta d}} ) \big/ N^{1 + \frac{1}{d+1/\beta}}$. The lower bound has the same order, since no cancellation can take place by non-negativity of the two contributions to the expected value \eqref{eq:compartmentalized.B}.
\end{inparaenum}
\end{rmks}

The following assertion is a consequence of these remarks.

\begin{cor} 
Under the assumptions of Theorem~\ref{thm:wce.form.B}, let $(A_{m,k}^{(M,K)})$ be a small-diameter equal mass partition of $\mathbb{R}^{d+1}$ into $N = M K$ parts of equal mass $1 / N$. If
\begin{equation*} 
\int_{0}^{\infty} \Phi( r ) \psidensity( r ) \dd r - \frac{1}{K} \sum_{k=1}^K \int_{\rho_{k-1}}^{\rho_k} \int_{\rho_{k-1}}^{\rho_k} \Phi( \min\{ r, \rho \} ) \frac{\psidensity( r ) \dd r}{1/K} \frac{\psidensity( \rho ) \dd \rho}{1/K} \leq C(\phi,\psi) \frac{g(K)}{K^{\beta}},
\end{equation*}
where $\rho_0, \dots, \rho_K$ satisfy \eqref{eq:rho.k.conditions}, for sufficiently large $K$ for some fixed $\beta \geq 1$ and $g(K)$ a function not growing faster than any power of $K$, then $K$ is of order $M^{\frac{1}{\beta}\frac{1}{d}}$ and
\begin{equation*}
\begin{split}
&\mathbb{E}\Bigg[ \sup_{\substack{f \in \mathcal{H}(\RK), \\ \| f \|_{\RK} \leq 1}} \left| \frac{1}{M K} \sum_{m=1}^M \sum_{k=1}^K f( \PT{y}_{m,k}^{(M,K)} ) - \int_{\mathbb{R}^{d+1}} f( \PT{y} ) \psi( \PT{y} ) \dd \lambda_{d+1}( \PT{y} ) \right|^2 \Bigg] \leq C^\prime(\phi,\psi) \frac{g( N^{\frac{1}{1+\beta d}} )}{N^{1 + \frac{1}{d+1/\beta}}},
\end{split}
\end{equation*}
where $\PT{y}_{m,k}^{(M,K)}$ is chosen randomly from $A_{m,k}^{(M,K)}$ with respect to the probability measure $\eta_{m,k}^{(M,K)}$ induced by the density function $\psi$ (i.e., $\dd \eta_{m,k}^{(M,K)}( \rho \PT{y}^* ) = K M \dd \sigma_d|_{D_{m,M}}( \PT{y}^* ) \psidensity( \rho ) \dd \rho |_{[\rho_{k-1}, \rho_k)}$). The expected value satisfies an analogue lower bound when the first inequality can be reversed. 
\end{cor}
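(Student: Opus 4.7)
The plan is to derive the corollary directly from Theorem~\ref{thm:compartmentalized.B} by bounding each of the two summands on the right-hand side of \eqref{eq:compartmentalized.B} separately, and then optimizing the free parameter $K$ subject to the constraint $N = MK$. The hypothesis of the corollary is precisely that the average $\frac{1}{K}\sum_{k=1}^K G_{k,K}$, in the notation of the third remark preceding the corollary, is at most $C(\phi,\psi)\,g(K)/K^{\beta}$. Hence the first summand of \eqref{eq:compartmentalized.B} is bounded by a constant multiple of $g(K)/(M K^{1+\beta})$.

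For the second summand, the first square-bracketed factor is a convex combination, over $k$, of the quantities
\begin{equation*}
\int_{\rho_{k-1}}^{\rho_k}\!\!\int_{\rho_{k-1}}^{\rho_k} \Phi(\min\{r,\rho\}) \frac{\psidensity(r)\,\dd r}{1/K} \frac{\psidensity(\rho)\,\dd\rho}{1/K},
\end{equation*}
each of which is at most $1$ since $\Phi \leq 1$; so this factor is bounded uniformly in $K$. The second square-bracketed factor has exact order $M^{-1/d}$ by the small-diameter estimate \eqref{eq:small.diameter.constraints.consequence}. Therefore the second summand is of order $M^{-1-1/d} K^{-1}$, and the expected value in \eqref{eq:compartmentalized.B} is bounded above by a constant times
\begin{equation*}
\frac{g(K)}{M K^{1+\beta}} + \frac{1}{M^{1+1/d} K}.
\end{equation*}

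Next I would balance these two summands. Substituting $M = N/K$ and ignoring the slowly varying factor $g$, setting the summands equal leads to $K^{\beta + 1/d} \asymp N^{1/d}$, that is, $K \asymp N^{1/(\beta d + 1)}$, or equivalently $K \asymp M^{1/(\beta d)}$, as asserted. A direct exponent calculation (using $\beta/(\beta d+1) = 1/(d+1/\beta)$) then shows that at this choice of $K$ both summands have order $N^{-1 - 1/(d + 1/\beta)}$, with the first carrying the extra factor $g(K) = g(N^{1/(1+\beta d)})$. This yields the claimed upper bound.

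For the matching lower bound, both summands in \eqref{eq:compartmentalized.B} are non-negative (the first because $G_{k,K} \geq 0$, as noted in the third remark, and the second because of the lower estimate in \eqref{eq:small.diameter.constraints.consequence} together with positivity of $\Phi$). Hence no cancellation can occur, and if the hypothesis can be reversed with a matching lower bound of order $g(K)/K^{\beta}$, the same balancing argument produces a lower bound of the same asymptotic order. The only potential stumbling block is the exponent bookkeeping in passing from $M^{-1-1/d}K^{-1}$ and $g(K)/(MK^{1+\beta})$ to the compact form $g(N^{1/(1+\beta d)})/N^{1+1/(d+1/\beta)}$; this is purely mechanical, since all of the analytic content already resides in Theorem~\ref{thm:compartmentalized.B} and the preceding remarks.
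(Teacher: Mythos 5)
Your argument is correct and follows essentially the same route as the paper: Theorem~\ref{thm:compartmentalized.B} plus the Remarks (in particular the identification of $\tfrac{1}{K}\sum_k G_{k,K}$ with the hypothesis, the non-negativity of $G_{k,K}$, the two-sided $M^{-1/d}$ estimate from the small-diameter constraint, and the balancing of the two contributions via $K \asymp M^{1/(\beta d)}$). The exponent bookkeeping you flagged checks out: with $N = MK$ and $K \asymp M^{1/(\beta d)}$ one has $K \asymp N^{1/(1+\beta d)}$, $M^{1/d} \asymp N^{\beta/(\beta d + 1)}$, and $\beta/(\beta d + 1) = 1/(d+1/\beta)$, giving both summands the claimed order.
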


We observe that the bound for the expected value above (that is, the worst case error of a typical $N$-point sample chosen according to the compartmentalization strategy) approaches the lower bound $N^{-1-1/d}$ (cf. Theorem~\ref{thm:wce.lower.bound}) as $\beta$ becomes large.

Theorem~\ref{thm:compartmentalized.B} for $K = 1$ provides the worst-case error behavior for a typical sample of $M$ points $\rho_1 \PT{y}_1^*, \dots, \rho_M \PT{y}_M^* \in \mathbb{R}^{d+1}$ such that in each section
\begin{equation*}
A_m^{(M)} \DEF A_{m,1}^{(M,1)} \DEF \left\{ \rho \PT{x}^* \in \mathbb{R}^{d+1} : \PT{x}^* \in D_{m,M}, \rho \geq 0 \right\}, \qquad 1 \leq m \leq M,
\end{equation*}
exactly one point is randomly selected. The radii are independently and identically $\psidensity \lambda_1$-distributed. The points $\PT{y}_1^*, \dots, \PT{y}_M^*$ are  so-called \emph{randomized equal area points on $\mathbb{S}^d$}; i.e, the $m$th point is selected at random from $D_{m,M}$ with respect to uniform measure on $D_{m,M}$. It is shown in \cite{BrSaSlWo2013} that (cf. \eqref{eq:wce.Sobolev.sphere})
\begin{equation*}
\frac{\beta^\prime}{M^{1+1/d}} \leq \left[ \wce( \numint[\{ \PT{y}_1^*, \dots, \PT{y}_M^* \}]; \mathcal{H}( \RK_{\mathcal{S}} ) ) \right]^2 \leq \frac{\beta}{M^{1+1/d}},
\end{equation*}
where $\beta, \beta' > 0$ depend on the $\mathcal{H}( \RK_{\mathcal{S}} )$-norm, but are independent of $M$, and $\beta$ also depends on the sequence of equal area partitions $( \mathcal{D}_N)$ of $\mathbb{S}^d$ with small diameter. For the spatial variant of randomized equal area points on $\mathbb{S}^d$ we have the following corollary of Theorem~\ref{thm:compartmentalized.B}.

\begin{cor} 
Under the assumptions of Theorem~\ref{thm:wce.form.B}, let $(A_{m}^{(M)})$ be a small-diameter equal mass partition of $\mathbb{R}^{d+1}$ into $M$ parts of equal mass $1 / M$. Then
\begin{equation} \label{eq:compartmentalized.Ba}
\begin{split}
&\mathbb{E}\big[ \{ \wce( \numint[\{ \PT{y}_{1}^{(M)}, \dots, \PT{y}_{M}^{(M)} \} ]; \mathcal{H}(\RK) ) \}^2 \big] = \frac{1}{M} \left[ \int_{0}^{\infty} \Phi( r ) \psidensity( r ) \dd r - W( \RK_{\mathcal{R}}, \psidensity ) \right] \\
&\phantom{equals=}+ \frac{C_d}{M} W( \RK_{\mathcal{R}}, \psidensity ) \left[ \frac{1}{M} \sum_{m=1}^M \int_{D_{m,M}} \int_{D_{m,M}} \left\| \PT{x}^* - \PT{y}^* \right\| \frac{\dd \sigma_d( \PT{x}^* )}{1/M} \frac{\dd \sigma_d( \PT{x}^* )}{1/M} \right],
\end{split}
\end{equation}
where $\PT{y}_{m}^{(M)}$ is chosen randomly from $A_{m}^{(M)}$ with respect to the probability measure $\eta_{m}^{(M)}$ induced by the density function $\psi$ (i.e., $\dd \eta_{m}^{(M)}( \rho \PT{y}^* ) = M \dd \sigma_d|_{D_{m,M}}( \PT{y}^* ) \psidensity( \rho ) \dd \rho$).
\end{cor}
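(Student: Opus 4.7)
The corollary is obtained as the special case $K=1$ of Theorem~\ref{thm:compartmentalized.B}, so my plan is to substitute $K=1$ into formula \eqref{eq:compartmentalized.B} and verify that the two displayed lines of \eqref{eq:compartmentalized.Ba} result after identifying the relevant quantities.

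First I would unpack what $K=1$ means for the partition and probability measures. The radial partition degenerates to $\rho_0 = 0$ and $\rho_1 = \infty$, so there is a single radial strip carrying the full mass $1/1 = 1$, and the compartment $A_{m,1}^{(M,1)}$ coincides with the full infinite spherical cone $A_m^{(M)} = \{\rho \PT{x}^* : \PT{x}^* \in D_{m,M},\, \rho \geq 0\}$ of mass $1/M$ specified in the corollary. The conditional probability measure $\eta_{m,1}^{(M,1)}$ defined in Theorem~\ref{thm:compartmentalized.B} reduces to $d\eta_m^{(M)}(\rho \PT{y}^*) = M\,d\sigma_d|_{D_{m,M}}(\PT{y}^*)\,\psidensity(\rho)\,d\rho$, matching the sampling distribution in the corollary's statement. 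Hence the left-hand side of \eqref{eq:compartmentalized.Ba} is literally the left-hand side of \eqref{eq:compartmentalized.B} with $K=1$.

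Next I would simplify the right-hand side of \eqref{eq:compartmentalized.B} term by term. With $K=1$ the single sum over $k$ collapses to one term with integration region $[0,\infty)$, so the first square-bracketed factor becomes
\begin{equation*}
\int_0^\infty \Phi(r)\,\psidensity(r)\,dr - \int_0^\infty \int_0^\infty \Phi(\min\{r,\rho\})\,\psidensity(r)\psidensity(\rho)\,dr\,d\rho,
\end{equation*}
and recognising the double integral as $W(\RK_{\mathcal{R}}, \psidensity)$ yields the first line of \eqref{eq:compartmentalized.Ba}. The second square-bracketed factor similarly reduces to $W(\RK_{\mathcal{R}}, \psidensity)$, which when multiplied by the $C_d/M$ prefactor and the average over the spherical cells $D_{m,M}$ gives the second line.

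There is no serious obstacle here since all of the probabilistic and reproducing kernel analysis was already absorbed in the proof of Theorem~\ref{thm:compartmentalized.B}. The only thing to be careful about is the bookkeeping of the normalizations $1/K$ and $1/M$ and the correct identification of $W(\RK_{\mathcal{R}}, \psidensity)$ as the double $\Phi(\min\{r,\rho\})$-integral against $\psidensity \otimes \psidensity$; once this is done, the corollary follows directly from Theorem~\ref{thm:compartmentalized.B}.
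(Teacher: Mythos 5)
Your proposal is correct and takes essentially the same route as the paper: the text surrounding the corollary states explicitly that it is the $K=1$ specialization of Theorem~\ref{thm:compartmentalized.B}, and your substitution, the identification of the double integral with $W(\RK_{\mathcal{R}},\psidensity)$, and the reduction of the probability measure $\eta_{m,1}^{(M,1)}$ to $\eta_m^{(M)}$ are precisely the bookkeeping the paper relies on.
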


From \eqref{eq:small.diameter.constraints.consequence} (small-diameter constraints) we see that the second part of the right-hand side of the above formula is of optimal order $M^{-1 - 1/d}$.

%
%
We conclude this subsection by discussing a discrete randomized assignment process. Given a collection of pairwise different radii $\{ \rho_1, \dots, \rho_N \}$ and a configuration of $N$ pairwise different points $\{ \PT{y}_1^*, \dots, \PT{y}_N^* \}$ on $\mathbb{S}^d$, a sample of $N$ points in $\mathbb{R}^{d+1}$ can be obtained by assigning to each point $\PT{y}_j^*$ a radius $\rho_{\pi(j)}$ at random. In the ``selection without replacement'' model this $\pi$ is a permutation uniformly chosen from the symmetric group $\mathfrak{S}_N$.

\begin{thm} \label{thm:discrete.random.assignment}
Let $\{ \PT{y}_1^*, \dots, \PT{y}_N^* \}$ be collection of $N$ pairwise different points on $\mathbb{S}^d$ and $\{ \rho_1, \dots, \rho_N \}$ a family of $N$ pairwise different positive radii. Under the assumptions of Theorem~\ref{thm:wce.form.B},
\begin{equation}
\begin{split} \label{eq:discrete.random.assignment}
&\mathbb{E}\big[ \{ \wce( \numint[\{ \rho_{\pi(1)} \PT{y}_1^*, \dots, \rho_{\pi(N)} \PT{y}_N^* \} ]; \mathcal{H}(\RK) ) \}^2 \big] \\
&\phantom{equals}= \Bigg[ \frac{1}{N ( N - 1 )} \mathop{\sum_{\ell=1}^N \sum_{m=1}^N }_{\ell \neq m} \RK_{\mathcal{R}}( \rho_\ell, \rho_m ) \Bigg] \left[ \frac{1}{N^2} \sum_{i=1}^N \sum_{j=1}^N \RKmod_{\mathcal{S}}( \PT{y}_i^*, \PT{y}_j^* ) \right] \\
&\phantom{equals=}+ \frac{1}{N} \Bigg[ \frac{1}{N} \sum_{i=1}^N \RK_{\mathcal{R}}( \rho_i, \rho_i ) - \frac{1}{N ( N - 1 )} \mathop{\sum_{\ell=1}^N \sum_{m=1}^N }_{\ell \neq m} \RK_{\mathcal{R}}( \rho_\ell, \rho_m ) \Bigg] \left[ 1 - W( \RK_{\mathcal{S}} ) \right] \\
&\phantom{equals=}+ \left[ \frac{1}{N^2} \sum_{\ell=1}^N \sum_{m=1}^N \RKmod_{\mathcal{R}}( \rho_\ell, \rho_m ) - \frac{2}{N} \sum_{j=1}^N \int_0^\infty \RKmod_{\mathcal{R}}( r, \rho_j ) \psidensity( r ) \dd r \right] W( \RK_{\mathcal{S}} ),
\end{split}
\end{equation}
where $\pi$ is a permutation uniformly chosen from the symmetric group $\mathfrak{S}_N$.
\end{thm}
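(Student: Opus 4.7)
The plan is to specialize the worst-case error formula from Theorem~\ref{thm:wce.form.A} to the randomized node set $X_N^\pi \DEF \{\rho_{\pi(1)}\PT{y}_1^*,\dots,\rho_{\pi(N)}\PT{y}_N^*\}$ and then take the expectation in $\pi$ term-by-term. Using the product form \eqref{eq:mathcal.K.2nd.form.all}, the summand factors as
\[
  \RK(\rho_{\pi(i)}\PT{y}_i^*, \rho_{\pi(j)}\PT{y}_j^*) = \RK_{\mathcal{R}}(\rho_{\pi(i)}, \rho_{\pi(j)})\,\RK_{\mathcal{S}}(\PT{y}_i^*, \PT{y}_j^*),
\]
so only the radial part depends on the random permutation. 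I would split the double sum into its diagonal ($i=j$) and off-diagonal ($i\neq j$) pieces; on the diagonal one has $\RK_{\mathcal{S}}(\PT{y}_i^*,\PT{y}_i^*) = 1$ and $\RK_{\mathcal{R}}(\rho_{\pi(i)},\rho_{\pi(i)}) = \Phi(\rho_{\pi(i)})$.

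Next I would exploit the symmetry of the uniform law on $\mathfrak{S}_N$: for any fixed $i$, the variable $\pi(i)$ is uniform on $\{1,\dots,N\}$, and for any fixed $i\neq j$ the pair $(\pi(i),\pi(j))$ is uniform on the $N(N-1)$ ordered pairs of distinct indices. This yields
\[
  \mathbb{E}_\pi\bigl[\RK_{\mathcal{R}}(\rho_{\pi(i)},\rho_{\pi(i)})\bigr] = \frac{1}{N}\sum_{k=1}^N \RK_{\mathcal{R}}(\rho_k,\rho_k), \qquad \mathbb{E}_\pi\bigl[\RK_{\mathcal{R}}(\rho_{\pi(i)},\rho_{\pi(j)})\bigr] = \frac{1}{N(N-1)}\sum_{\ell\neq m}\RK_{\mathcal{R}}(\rho_\ell,\rho_m),
\]
while the deterministic factors $\RK_{\mathcal{S}}(\PT{y}_i^*,\PT{y}_j^*)$ pass through the expectation and produce the sum $\frac{1}{N^2}\sum_{i\neq j}\RK_{\mathcal{S}}(\PT{y}_i^*,\PT{y}_j^*)$ as a coefficient.

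For the linear ``integral'' term in the wce formula, I would invoke \eqref{eq:single.integral.of.RK} to express $\int_{\mathbb{R}^{d+1}}\RK(\PT{x},\rho\PT{y}^*)\psi(\PT{x})\,\dd\lambda_{d+1}(\PT{x})$ as $F(\rho)\,W(\RK_{\mathcal{S}})$, with $F(\rho) \DEF \Phi(\rho)\int_\rho^\infty \psidensity(r)\,\dd r + \int_0^\rho \Phi(r)\psidensity(r)\,\dd r = \int_0^\infty \RK_{\mathcal{R}}(r,\rho)\psidensity(r)\,\dd r$. Averaging $F(\rho_{\pi(j)})$ over $\pi$ produces $\frac{1}{N}\sum_k F(\rho_k)$, and together with the identity $W(\RK) = W(\RK_{\mathcal{R}},\psidensity)\,W(\RK_{\mathcal{S}})$ from \eqref{eq:W.K.factorization} this matches $W(\RK_{\mathcal{S}})$ times $\frac{2}{N}\sum_j \int_0^\infty \RKmod_{\mathcal{R}}(r,\rho_j)\psidensity(r)\,\dd r$ up to explicit constants.

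Finally I would regroup, using $\RK_{\mathcal{S}} = \RKmod_{\mathcal{S}} + W(\RK_{\mathcal{S}})$ and $\RK_{\mathcal{R}} = \RKmod_{\mathcal{R}} + W(\RK_{\mathcal{R}},\psidensity)$, so that the off-diagonal radial average multiplies the full spherical double sum of the centered kernel (first bracket of \eqref{eq:discrete.random.assignment}), the diagonal-minus-off-diagonal radial excess $\frac{1}{N}\sum_i \RK_{\mathcal{R}}(\rho_i,\rho_i) - \frac{1}{N(N-1)}\sum_{\ell\neq m}\RK_{\mathcal{R}}(\rho_\ell,\rho_m)$ multiplies the defect $1-W(\RK_{\mathcal{S}}) = \RKmod_{\mathcal{S}}(\PT{y}^*,\PT{y}^*)$ (second bracket), and the remaining terms reassemble into the ``discrepancy form'' of the one-dimensional radial worst-case error, scaled by $W(\RK_{\mathcal{S}})$ (third bracket). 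The only real obstacle is the bookkeeping: tracking the three constants $W(\RK)$, $W(\RK_{\mathcal{S}})$, $W(\RK_{\mathcal{R}},\psidensity)$ and confirming that the cross-terms produced by the centerings telescope as claimed. Identity \eqref{eq:mean.zero.identity}, together with the factorization $W(\RK) = W(\RK_{\mathcal{R}},\psidensity)\,W(\RK_{\mathcal{S}})$, serves as a sanity check that all constant offsets cancel.
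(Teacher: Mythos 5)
Your proposal is correct and follows essentially the same route as the paper: the key step in both is observing that under the uniform law on $\mathfrak{S}_N$, the diagonal terms average $\RK_{\mathcal{R}}(\rho_{\pi(j)},\rho_{\pi(j)})$ to $\tfrac{1}{N}\sum_i \RK_{\mathcal{R}}(\rho_i,\rho_i)$ and the off-diagonal terms average to $\tfrac{1}{N(N-1)}\sum_{\ell\neq m}\RK_{\mathcal{R}}(\rho_\ell,\rho_m)$, after which the product structure \eqref{eq:mathcal.K.2nd.form.all}, the factorization \eqref{eq:W.K.factorization}, and the identity $\RKmod_{\mathcal{S}}(\PT{y}^*,\PT{y}^*)=1-W(\RK_{\mathcal{S}})$ let everything regroup into the three bracketed pieces. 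The only superficial difference is that you start from Theorem~\ref{thm:wce.form.A} and then invoke \eqref{eq:single.integral.of.RK}, whereas the paper starts directly from Theorem~\ref{thm:wce.form.B}; these are the same specialization.
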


\begin{proof}
Let $| \mathfrak{S}_N |$ denote the cardinality $\mathfrak{S}_N$. By Theorem~\ref{thm:wce.form.B} (also cf. Proof of Theorem~\ref{thm:wce.form.B})
\begin{align*}
&\mathbb{E}\big[ \{ \wce( \numint[\{ \rho_{\pi(1)} \PT{y}_1^*, \dots, \rho_{\pi(N)} \PT{y}_N^* \} ]; \mathcal{H}(\RK) ) \}^2 \big] \\
&\phantom{equa}= \frac{1}{| \mathfrak{S}_N |} \sum_{\pi \in \mathfrak{S}_N} \{ \wce( \numint[\{ \rho_{\pi(1)} \PT{y}_1^*, \dots, \rho_{\pi(N)} \PT{y}_N^* \}]; \mathcal{H}(\RK) ) \}^2 \\
&\phantom{equa}= \frac{1}{| \mathfrak{S}_N |} \sum_{\pi \in \mathfrak{S}_N} \Bigg[ \frac{1}{N^2} \sum_{j=1}^N \mathcal{K}( \rho_{\pi(j)} \PT{y}_j^*, \rho_{\pi(j)} \PT{y}_j^*) + \frac{1}{N^2} \mathop{\sum_{i=1}^N \sum_{j=1}^N }_{i \neq j} \mathcal{K}( \rho_{\pi(i)} \PT{y}_i^*, \rho_{\pi(j)} \PT{y}_j^*) \Bigg] \\
&\phantom{equa=}- \frac{1}{| \mathfrak{S}_N |} \sum_{\pi \in \mathfrak{S}_N} \frac{2}{N} \sum_{j=1}^N \Bigg[ \int_0^\infty \RKmod_{\mathcal{R}}( r, \rho_{\pi(j)} ) \psidensity( r ) \dd r \Bigg] W( \RK_{\mathcal{S}} ).
\end{align*}
Collecting terms with the same $\pi( i )$ and $\pi( j )$, we arrive at
\begin{equation*}
\begin{split}
&\mathbb{E}\big[ \{ \wce( \numint[\{ \rho_{\pi(1)} \PT{y}_1^*, \dots, \rho_{\pi(N)} \PT{y}_N^* \} ]; \mathcal{H}(\RK) ) \}^2 \big] = \frac{1}{N} \, \frac{1}{N^2} \sum_{i=1}^N \sum_{j=1}^N \RKmod( \rho_i \PT{y}_j^*, \rho_i \PT{y}_j^* ) \\
&\phantom{e=}+ \frac{1}{N ( N - 1 )} \, \frac{1}{N^2} \mathop{\sum_{\ell=1}^N \sum_{m=1}^N }_{\ell \neq m} \mathop{\sum_{i=1}^N \sum_{j=1}^N }_{i \neq j}  \RKmod( \rho_\ell \PT{y}_i^*, \rho_m \PT{y}_j^* ) - \frac{2}{N} \sum_{j=1}^N \Bigg[ \int_0^\infty \RKmod_{\mathcal{R}}( r, \rho_j ) \psidensity( r ) \dd r \Bigg] W( \RK_{\mathcal{S}} ).
\end{split}
\end{equation*}
Substituting \eqref{eq:mathcal.K.2nd.form.all} and \eqref{eq:W.K.factorization}, we have
\begin{equation*}
\begin{split}
&\mathbb{E}\big[ \{ \wce( \numint[\{ \rho_{\pi(1)} \PT{y}_1^*, \dots, \rho_{\pi(N)} \PT{y}_N^* \} ]; \mathcal{H}(\RK) ) \}^2 \big] = \frac{1}{N} \, \frac{1}{N^2} \sum_{i=1}^N \sum_{j=1}^N \RK_{\mathcal{R}}( \rho_i, \rho_i ) \\
&\phantom{equals=}+ \Bigg[ \frac{1}{N ( N - 1 )} \mathop{\sum_{\ell=1}^N \sum_{m=1}^N }_{\ell \neq m} \RK_{\mathcal{R}}( \rho_\ell, \rho_m ) \Bigg] \Bigg[ \frac{1}{N^2} \mathop{\sum_{i=1}^N \sum_{j=1}^N }_{i \neq j} \RK_{\mathcal{S}}( \PT{y}_i^*, \PT{y}_j^* ) \Bigg] \\
&\phantom{equals=}- \frac{2}{N} \sum_{j=1}^N \Bigg[ \int_0^\infty \RK_{\mathcal{R}}( r, \rho_j ) \psidensity( r ) \dd r \Bigg] W( \RK_{\mathcal{S}} ) + W( \RK_{\mathcal{R}}, \psidensity ) W( \RK_{\mathcal{S}} ).
\end{split}
\end{equation*}
Rearranging terms, we get
\begin{align*}
&\mathbb{E}\big[ \{ \wce( \numint[\{ \rho_{\pi(1)} \PT{y}_1^*, \dots, \rho_{\pi(N)} \PT{y}_N^* \} ]; \mathcal{H}(\RK) ) \}^2 \big] = \frac{1}{N} \, \frac{1}{N} \sum_{i=1}^N \RK_{\mathcal{R}}( \rho_i, \rho_i ) \\
&\phantom{equals=}+ \Bigg[ \frac{1}{N ( N - 1 )} \mathop{\sum_{\ell=1}^N \sum_{m=1}^N }_{\ell \neq m} \RK_{\mathcal{R}}( \rho_\ell, \rho_m ) \Bigg] \left[ \frac{1}{N^2} \sum_{i=1}^N \sum_{j=1}^N \RKmod_{\mathcal{S}}( \PT{y}_i^*, \PT{y}_j^* ) - \frac{1}{N} + W( \RK_{\mathcal{S}} ) \right] \\
&\phantom{equals=}- \frac{2}{N} \sum_{j=1}^N \Bigg[ \int_0^\infty \RK_{\mathcal{R}}( r, \rho_j ) \psidensity( r ) \dd r \Bigg] W( \RK_{\mathcal{S}} ) + W( \RK_{\mathcal{R}}, \psidensity ) W( \RK_{\mathcal{S}} ) \\
&\phantom{equals}= \Bigg[ \frac{1}{N ( N - 1 )} \mathop{\sum_{\ell=1}^N \sum_{m=1}^N }_{\ell \neq m} \RK_{\mathcal{R}}( \rho_\ell, \rho_m ) \Bigg] \left[ \frac{1}{N^2} \sum_{i=1}^N \sum_{j=1}^N \RKmod_{\mathcal{S}}( \PT{y}_i^*, \PT{y}_j^* ) \right] \\
&\phantom{equals=}+ \frac{1}{N} \, \frac{1}{N} \sum_{i=1}^N \RK_{\mathcal{R}}( \rho_i, \rho_i ) - \frac{1}{N} \, \frac{1}{N ( N - 1 )} \mathop{\sum_{\ell=1}^N \sum_{m=1}^N }_{\ell \neq m} \RK_{\mathcal{R}}( \rho_\ell, \rho_m ) \\
&\phantom{equals=}+ \Bigg[ \frac{1}{N ( N - 1 )} \mathop{\sum_{\ell=1}^N \sum_{m=1}^N }_{\ell \neq m} \RKmod_{\mathcal{R}}( \rho_\ell, \rho_m ) - \frac{2}{N} \sum_{j=1}^N \int_0^\infty \RKmod_{\mathcal{R}}( r, \rho_j ) \psidensity( r ) \dd r \Bigg] W( \RK_{\mathcal{S}} ). 
\end{align*}
The result follows by rearrangement of terms.
\end{proof}

We observe that the right-hand side of \eqref{eq:discrete.random.assignment} consists of three non-negative parts:
\begin{inparaenum}[\bf\itshape \upshape(A\upshape)]
\item The first part contains the worst-case error \eqref{eq:wce.Sobolev.sphere} of a QMC method with nodes on $\mathbb{S}^d$ for $\mathbb{H}^{(d+1)/2}( \mathbb{S}^d )$, 
\item a connection term comparing the average values of the diagonal terms and the non-diagonal terms of the kernel $\RK_{\mathcal{R}}$ multiplied by $1/N$, and
\item a worst-case error as given in the following result.
\end{inparaenum}

\begin{thm}
Let $\mathcal{H}( \RK_{\mathcal{R}} )$ be the Hilbert space uniquely defined by the reproducing kernel \eqref{eq:RK.cal.R} with closed form \eqref{eq:RK.cal.R.closed.form}. Then the QMC method
\begin{equation*}
\widetilde{\numint}[\{ \rho_1, \dots, \rho_N \}]( g ) \DEF \frac{1}{N} \sum_{j=1}^N g( r_j ), \qquad g \in \mathcal{H}( \RK_{\mathcal{R}} ),
\end{equation*}
with positive radii $\rho_1, \dots, \rho_N$ approximating the exact integral
\begin{equation*}
\widetilde{\xctint}[\psidensity]( g ) \DEF \int_0^\infty g( r ) \psidensity( r ) \dd r,
\end{equation*}
where the density function $\psidensity$ is given in \eqref{eq:psi.isotropic.density.and.cdf}, has the following worst-case error representations
\begin{equation*}
\begin{split}
\wce( \widetilde{\numint}[\{ \rho_1, \dots, \rho_N \}]; \mathcal{H}( \RK_{\mathcal{R}} ) )
&= \left( \frac{1}{N^2} \sum_{\ell=1}^N \sum_{m=1}^N \RKmod_{\mathcal{R}}( \rho_\ell, \rho_m ) - \frac{2}{N} \sum_{j=1}^N \int_0^\infty \RKmod_{\mathcal{R}}( r, \rho_j ) \psidensity( r ) \dd r \right)^{1/2} \\
&= D_{\mathbb{L}_2}^{\mathcal{R}}( \{ \rho_1, \dots, \rho_N \} ).
\end{split}
\end{equation*}
Here, $D_{\mathbb{L}_2}^{\mathcal{R}}( \{ \rho_1, \dots, \rho_N \} )$ denotes the $\mathbb{L}_2$-discrepancy
\begin{equation*}
D_{\mathbb{L}_2}^{\mathcal{R}}( \{ \rho_1, \dots, \rho_N \} ) \DEF \left( \int_0^\infty \left| \widetilde{\delta}[ \{ \rho_1, \dots, \rho_N \} ]( R ) \right|^2 \phi( R ) \dd R \right)^{1/2}
\end{equation*}
of the collection $\{ \rho_1, \dots, \rho_N \}$ with local discrepancy function
\begin{equation*}
\widetilde{\delta}[ \{ \rho_1, \dots, \rho_N \} ]( R ) \DEF \frac{1}{N} \sum_{j=1}^N \indicatorfunction_{[R, \infty)}( \rho_j ) - \int_R^\infty \psidensity( \rho ) \dd \rho
\end{equation*}
with respect to half-open infinite intervals $[R, \infty)$ as test sets.
\end{thm}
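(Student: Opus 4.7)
The plan is to mirror the worst-case error derivation given in Section~\ref{subsec:wce} (Theorem~\ref{thm:wce.form.A}) but in the purely radial setting: replace $\mathcal{H}(\RK)$ by $\mathcal{H}(\RK_{\mathcal{R}})$, the cone test sets $\mathcal{C}(\PT{z}^*,t;R)$ by the half-lines $[R,\infty)$, and the parameter measure $\dd\sigma_d(\PT{z}^*)\,\dd t\,\phi(R)\dd R$ by just $\phi(R)\dd R$. Concretely, the integral representation \eqref{eq:RK.cal.R} exhibits $\RK_{\mathcal{R}}$ as a product-of-indicators kernel of exactly the same shape as \eqref{eq:mathcal.K.1st.form}, so the function space framework of $\mathcal{U}$, with its inner product \eqref{eq:inner.product} and norm \eqref{eq:norm}, specializes verbatim to a space $\widetilde{\mathcal{U}}$ of radial functions of the form $\widetilde{U}(r) = \int_0^\infty \indicatorfunction_{[R,\infty)}(r)\,\widetilde{u}(R)\,\phi(R)\dd R$.

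First, I would introduce the radial representer
\[
\widetilde{\mathcal{R}}(r) \DEF \frac{1}{N}\sum_{j=1}^N \RK_{\mathcal{R}}(r,\rho_j) - \int_0^\infty \RK_{\mathcal{R}}(r,\rho)\,\psidensity(\rho)\,\dd\rho,
\]
and use the reproducing property of $\RK_{\mathcal{R}}$ to observe that $\widetilde{\numint}(g)-\widetilde{\xctint}(g) = (g,\widetilde{\mathcal{R}})_{\RK_{\mathcal{R}}}$ for every $g\in\mathcal{H}(\RK_{\mathcal{R}})$. Cauchy--Schwarz together with the fact that equality is attained at $g\equiv\widetilde{\mathcal{R}}$ then yields $\wce(\widetilde{\numint};\mathcal{H}(\RK_{\mathcal{R}})) = \|\widetilde{\mathcal{R}}\|_{\RK_{\mathcal{R}}}$.

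Second, I would substitute \eqref{eq:RK.cal.R} into $\widetilde{\mathcal{R}}$ and swap the order of integration, obtaining
\[
\widetilde{\mathcal{R}}(r) = \int_0^\infty \indicatorfunction_{[R,\infty)}(r)\,\widetilde{\delta}[\{\rho_1,\dots,\rho_N\}](R)\,\phi(R)\,\dd R,
\]
which exhibits $\widetilde{\mathcal{R}}\in\widetilde{\mathcal{U}}$ with potential function exactly the local discrepancy function $\widetilde{\delta}$. Applying the one-dimensional analogue of \eqref{eq:norm} gives $\|\widetilde{\mathcal{R}}\|_{\RK_{\mathcal{R}}}^2 = \int_0^\infty|\widetilde{\delta}(R)|^2\phi(R)\dd R = [D_{\mathbb{L}_2}^{\mathcal{R}}(\{\rho_1,\dots,\rho_N\})]^2$, which is the second equality in the theorem.

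Third, to obtain the quadratic-form representation, I would expand $(\widetilde{\mathcal{R}},\widetilde{\mathcal{R}})_{\RK_{\mathcal{R}}}$ by bilinearity using the reproducing property; this produces the familiar three-term combination
\[
\frac{1}{N^2}\sum_{\ell=1}^N\sum_{m=1}^N \RK_{\mathcal{R}}(\rho_\ell,\rho_m) - \frac{2}{N}\sum_{j=1}^N \int_0^\infty \RK_{\mathcal{R}}(r,\rho_j)\psidensity(r)\dd r + W(\RK_{\mathcal{R}},\psidensity).
\]
Exactly as in the passage from \eqref{eq:wce.form.a} to the displayed formula of Theorem~\ref{thm:wce.form.A}, the constant $W(\RK_{\mathcal{R}},\psidensity)$ is absorbed by replacing $\RK_{\mathcal{R}}$ with $\RKmod_{\mathcal{R}} \DEF \RK_{\mathcal{R}} - W(\RK_{\mathcal{R}},\psidensity)$ in both the double sum and the single sum, because the double integral $\int_0^\infty\int_0^\infty \RKmod_{\mathcal{R}}(r,\rho)\psidensity(r)\psidensity(\rho)\dd r\,\dd\rho$ vanishes by definition. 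This delivers the first equality. The only conceptual point that needs verification is the radial analogue of the isomorphism between $\widetilde{\mathcal{U}}\cap\{\|\cdot\|_{\RK_{\mathcal{R}}}<\infty\}$ and $\mathcal{H}(\RK_{\mathcal{R}})$, which is the only mild obstacle; however, this is covered by the embedding argument already recorded in Section~\ref{sec:rkhs} (cf.\ \eqref{em2}) and the appendix-style completion argument referenced there, so no new technical work is required.
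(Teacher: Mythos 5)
Your proposal is correct and follows exactly the route the paper indicates: the paper's proof consists solely of the remark that the worst-case error forms can be derived similarly as in Section~\ref{subsec:wce}, and your argument is precisely that derivation transposed to the radial setting (half-line test sets $[R,\infty)$, parameter measure $\phi(R)\dd R$, representer $\widetilde{\mathcal{R}}$, discrepancy form via the $\widetilde{\mathcal{U}}$-norm, and absorption of $W(\RK_{\mathcal{R}},\psidensity)$ by passing to $\RKmod_{\mathcal{R}}$). The step you flag as the only mild obstacle, the identification of $\widetilde{\mathcal{U}}\cap\{\|\cdot\|_{\RK_{\mathcal{R}}}<\infty\}$ with $\mathcal{H}(\RK_{\mathcal{R}})$, is indeed already handled by the arguments in Section~\ref{sec:rkhs} and Appendix~\ref{appdx:A}, so no gap remains.
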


\begin{proof}
The worst-case error forms can be derived similarly as in Section~\ref{subsec:wce}. We leave the details to the reader.
\end{proof}


\subsection{Normal and Nakagami distribution}
\label{sec:examples.A}

The \emph{Nakagami distribution} with shape parameter $\nu$ and spread $\Omega$ is used in engineering applications (cf., e.g., \cite{KoJiJa2004}). Its  probability density function is given by
\begin{equation} \label{eq:Nakagami.distribution}
\frac{2 \nu^\nu}{\gammafcn( \nu ) \Omega^\nu} \, x^{2\nu-1} \, \exp( - \frac{\nu}{\Omega} x^2 ), \qquad x > 0,
\end{equation}
and the corresponding cumulative distribution function is given by
\begin{equation*}
\gammafcnregularizedP( \nu, \frac{\nu}{\Omega} x^2 ),
\end{equation*}
where $\gammafcnregularizedP( a, x )$ and $\gammafcnregularizedQ( a, x )$ are the regularized incomplete gamma functions
\begin{subequations} \label{eq:regularized.gammafcn}
\begin{align}
\gammafcnregularizedP( a, x ) &\DEF \frac{\gamma(a, x)}{\gammafcn( a )}, \qquad \gamma( a, x ) \DEF \int_0^x \EulerE^{-t} t^{a-1} \dd t, \\
\gammafcnregularizedQ( a, x ) &\DEF \frac{\gammafcn(a, x)}{\gammafcn( a )}, \qquad \gammafcn( a, x ) \DEF \int_x^\infty \EulerE^{-t} t^{a-1} \dd t.
\end{align}
\end{subequations}

Suppose that the probability density function $\psi$ in \eqref{eq:psi.isotropic} is given by means of
\begin{subequations}
\begin{equation} \label{eq:h.and.Nagakami.example}
\psidensity( r ) = \omega_d h( r ) \, r^d \DEF \frac{2}{\gammafcn( \nu )} \left( \frac{\nu}{B} \right)^\nu r^{2\nu-1} \, \exp( - \frac{\nu}{B} r^2 ), \qquad r > 0,
\end{equation}
where $\nu > 0$ and $B > 0$.
%
Then
\begin{equation} \label{eq:integral.aux.A}
\int_\rho^\infty \psidensity( r ) \dd r = \frac{2}{\gammafcn( \nu )} \left( \frac{\nu}{B} \right)^\nu \int_\rho^\infty \exp( - \frac{\nu}{B} \, r^2 ) \, r^{2\nu-1} \dd r = \gammafcnregularizedQ( \nu, \frac{\nu}{B} \rho^2 ).
\end{equation}
Furthermore, we assume that for some $\mu > 0$ and $A > 0$,
\begin{equation} \label{eq:Phi.assumption}
\left( 1 - \Phi( \rho ) \right) \psidensity( \rho ) = \frac{2}{\gammafcn(\mu)} \left( \frac{\mu}{B} \right)^\mu \rho^{2\mu-1} \exp( - \frac{\mu}{A} \, \rho^2), \qquad \rho > 0,
\end{equation}
or equivalently,
\begin{equation} \label{eq:Phi.and.Nagakami.example}
1 - \Phi( \rho ) = \int_\rho^\infty \phi( r ) \dd r = \frac{\gammafcn( \nu )}{\gammafcn( \mu )} \left( \frac{\mu}{B} \right)^\mu \left( \frac{\nu}{B} \right)^{-\nu} \, \rho^{2 \mu - 2 \nu} \, \exp\big( - \big( \frac{\mu}{A} - \frac{\nu}{B} \big) \rho^2 \big), \quad \rho > 0.
\end{equation}
By definition \eqref{eq:Phi} the function $\Phi(\rho)$, $\rho \geq 0$, is a cdf with non-negative probability density function $\phi(R)$. Given that $\Phi$ satisfies \eqref{eq:Phi.and.Nagakami.example}, by assumption \eqref{eq:Phi.assumption}, we have the following additional restriction on the positive parameters $\mu, A$ and $\nu, B$; namely $\mu = \nu$ and $A < B$.

In the following let $\nu = \mu > 0$ and $0 < A < B$. Then
\begin{equation} \label{eq:Phi.and.Nagakami.example.B}
\Phi( \rho ) = \int_0^\rho \phi( r ) \dd r = 1 - \exp\big( - \mu \big( \frac{1}{A} - \frac{1}{B} \big) \rho^2 \big), \qquad \rho > 0.
\end{equation}
\end{subequations}
We need the following integral which appears in the worst-case error formula of Theorem~\ref{thm:wce.form.B}
\begin{align}
\int_0^\rho \Phi( r ) \, \psidensity( r ) \dd r
&= \int_0^\rho \psidensity( r ) \dd r - \int_0^\rho \left( 1 - \Phi( r ) \right) \psidensity( r ) \dd r  \notag \\
&= \gammafcnregularizedP\big( \mu, \frac{\mu}{B} \rho^2 \big) - \left( \frac{A}{B} \right)^\mu \gammafcnregularizedP\big( \mu, \frac{\mu}{A} \rho^2 \big). \label{eq:Phi.integral}
\end{align}
Consequently, it follows that \eqref{eq:single.integral.of.RK} can be written as
\begin{align}
&\int_{\mathbb{R}^{d+1}} \RKmod( \PT{x}, \rho \PT{y}^* ) \, \psi( \PT{x} ) \, \dd \lambda_{d+1}( \PT{x} ) \notag \\
&\phantom{equals}= W( \RK_{\mathcal{S}} ) \Bigg[ \Phi( \rho ) \, \gammafcnregularizedQ( \mu, \frac{\mu}{B} \rho^2 ) + \gammafcnregularizedP\big( \mu, \frac{\mu}{B} \rho^2 \big) - \left( \frac{A}{B} \right)^\mu \gammafcnregularizedP\big( \mu, \frac{\mu}{A} \rho^2 \big) - W( \RK_{\mathcal{R}}, \psidensity ) \Bigg] \notag \\
&\phantom{equals}= W( \RK_{\mathcal{S}} ) \Bigg[ 1 - \left( 1 - \Phi( \rho ) \right) \gammafcnregularizedQ( \mu, \frac{\mu}{B} \rho^2 ) - \left( \frac{A}{B} \right)^\mu \gammafcnregularizedP\big( \mu, \frac{\mu}{A} \rho^2 \big) - W( \RK_{\mathcal{R}}, \psidensity ) \Bigg]. \label{eq:single.int.isotropic.special}
\end{align}
Furthermore, \eqref{eq:double.int.isotropic} can be evaluated as follows:
\begin{align*}
W( \RK_{\mathcal{R}}, \psidensity )
&= 1 - 2 \int_0^\infty \exp\big( - \mu \big( \frac{1}{A} - \frac{1}{B} \big) \rho^2 \big) \gammafcnregularizedQ( \mu, \frac{\mu}{B} \rho^2 ) \psidensity( \rho ) \dd \rho \\
&= 1 - \frac{4}{\gammafcn( \mu )} \left( \frac{\mu}{B} \right)^\mu \int_0^\infty \gammafcnregularizedQ( \mu, \frac{\mu}{B} \rho^2 ) \rho^{2\mu-1} \exp\big( - \frac{\mu}{A} \rho^2 ) \dd \rho \\
&= 1 - \frac{2}{\gammafcn( \mu )} \int_0^\infty \gammafcnregularizedQ( \mu, x ) \, \exp( - \frac{B}{A} \, x) \, x^{\mu-1} \, \dd x.
\end{align*}
We then use \cite[Eq.~8.14.6]{NIST:DLMF} to express the integral in terms of a Gaussian hypergeometric function. We have
\begin{align*}
\frac{1}{\gammafcn(\mu)} \int_0^\infty \gammafcnregularizedQ( \mu, x ) \, \exp( - \frac{B}{A} \, x) \, x^{\mu-1} \, \dd x
&= \frac{1}{\gammafcn(\mu)} \frac{\gammafcn(2\mu)}{\mu \, \gammafcn(\mu)} \left( 1 + \frac{B}{A} \right)^{-2\mu} \Hypergeom{2}{1}{1,2\mu}{1+\mu}{\frac{B/A}{1+B/A}}.
\end{align*}
On observing that the \emph{regularized incomplete beta function}, defined by (cf. \cite[Eq~8.17.2]{NIST:DLMF})
\begin{equation} \label{eq:regularized.incomplete.beta.function}
\incompletebetafcnregularized_x( a, b ) \DEF \frac{\betafcn_x( a, b )}{\betafcn( a, b )}, \qquad 0 \leq x \leq 1 \, (a,b > 0)
\end{equation}
where (cf. \cite[Eq.s~8.17.1 and 8.17.3]{NIST:DLMF})
\begin{equation}
\betafcn_x( a, b ) \DEF \int_0^x t^{a-1} \left( 1 - t \right)^{b-1} \dd t, \qquad 0 \leq x \leq 1 \, (a,b > 0)
\end{equation}
and
\begin{equation}
\betafcn( a, b ) = \frac{\gammafcn(a) \gammafcn(b)}{\gammafcn(a + b)}, \qquad a,b > 0,
\end{equation}
has the hypergeometric function representation (cf. \cite[Eq.~8.17.8]{NIST:DLMF})
\begin{equation}
\incompletebetafcnregularized_x( a, b ) = \frac{\gammafcn(a + b)}{a \gammafcn(a) \gammafcn(b)} \, x^a  \left( 1 - x \right)^b \Hypergeom{2}{1}{1,a+b}{a+1}{x},
\end{equation}
we arrive at
\begin{equation} \label{eq:W.RK.R.specialized}
W( \RK_{\mathcal{R}}, \psidensity ) = 1  - 2 \left( \frac{A}{B} \right)^\mu \incompletebetafcnregularized_{\frac{B/A}{1 + B/A}}( \mu, \mu ).
\end{equation}

We summarize these observations as follows.

\begin{thm} \label{thm:wce.form.C}
Let $\mathcal{H}( \RK )$ be the Hilbert space uniquely defined by the reproducing kernel \eqref{eq:mathcal.K.1st.form} with closed form \eqref{eq:mathcal.K.2nd.form.all} and the density $\psi$ be isotropic satisfying \eqref{eq:psi.isotropic} and \eqref{eq:psi.isotropic.normalization}. Suppose \eqref{eq:h.and.Nagakami.example}. Further, we assume that $\Phi(\rho)$ is given by \eqref{eq:Phi.and.Nagakami.example.B}; hence
\begin{equation*}
\RK( r \PT{x}^*, \rho \PT{y}^* ) = \left( 1 - C_d \left\| \PT{x}^* - \PT{y}^* \right\| \right) \left[ 1 - \exp\Big( - \mu \Big( \frac{1}{A} - \frac{1}{B} \Big) \min\{ r^2, \rho^2 \} \Big) \right],
\end{equation*}
$r, \rho \geq 0$, $\PT{x}^*, \PT{y}^* \in \mathbb{S}^d$, where the parameters $\mu$, $A$ and $B$ satisfy $\mu > 0$ and $0 < A < B$. For a method $\numint[X_N]$ with node set $X_N = \{ \PT{x}_1, \dots, \PT{x}_N \} \subseteq \mathbb{R}^{d+1}$ one has
\begin{align*}
&\wce( \numint[X_N]; \mathcal{H}(\RK) ) = \Bigg( \frac{1}{N^2} \mathop{\sum_{i=1}^N \sum_{j=1}^N} \RKmod( \PT{x}_i, \PT{x}_j ) \\
&\phantom{eq}- \frac{2}{N} \sum_{j=1}^N \Bigg[ 1 - \left( 1 - \Phi( \| \PT{x}_j \| ) \right) \gammafcnregularizedQ( \mu, \frac{\mu}{B} \| \PT{x}_j \|^2 ) - \left( \frac{A}{B} \right)^\mu \gammafcnregularizedP\big( \mu, \frac{\mu}{A} \| \PT{x}_j \|^2 \big) - W( \RK_{\mathcal{R}}, \psidensity ) \Bigg] W( \RK_{\mathcal{S}} ) \Bigg)^{1/2}.
\end{align*}
The functions $\gammafcnregularizedP(a,x)$, $\gammafcnregularizedQ(a,x)$ are the regularized incomplete gamma functions given in~\eqref{eq:regularized.gammafcn}.
\end{thm}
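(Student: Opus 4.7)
The statement is essentially a specialization of Theorem~\ref{thm:wce.form.B} to the particular isotropic density and cdf pair specified by \eqref{eq:h.and.Nagakami.example} and \eqref{eq:Phi.and.Nagakami.example.B}. My plan is therefore to start from the formula in Theorem~\ref{thm:wce.form.B} and evaluate the two quantities inside the square bracket explicitly in closed form, then read off the kernel representation.

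First I would verify the claimed closed form of the reproducing kernel. Using \eqref{eq:mathcal.K.2nd.form.all}, $\RK(r\PT{x}^*,\rho\PT{y}^*) = \Phi(\min\{r,\rho\})(1-C_d\|\PT{x}^*-\PT{y}^*\|)$, so substituting \eqref{eq:Phi.and.Nagakami.example.B}, which gives $\Phi(\rho) = 1 - \exp(-\mu(1/A-1/B)\rho^2)$, together with $\min\{r,\rho\}^2 = \min\{r^2,\rho^2\}$ yields the displayed expression for $\RK(r\PT{x}^*,\rho\PT{y}^*)$. Since $0 < A < B$ and $\mu > 0$, the exponent is non-positive, so $\Phi$ is a valid cdf as required.

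The bulk of the work is then the explicit evaluation of the bracketed sum $\Phi(\|\PT{x}_j\|)\int_{\|\PT{x}_j\|}^\infty \psidensity(r)\,\dd r + \int_0^{\|\PT{x}_j\|}\Phi(r)\psidensity(r)\,\dd r - W(\RK_{\mathcal{R}},\psidensity)$ that appears in Theorem~\ref{thm:wce.form.B}. Writing $\rho := \|\PT{x}_j\|$, the first integral is $\gammafcnregularizedQ(\mu,\tfrac{\mu}{B}\rho^2)$ by \eqref{eq:integral.aux.A}, and the second is $\gammafcnregularizedP(\mu,\tfrac{\mu}{B}\rho^2) - (A/B)^\mu \gammafcnregularizedP(\mu,\tfrac{\mu}{A}\rho^2)$ by \eqref{eq:Phi.integral}. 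Adding these,
\begin{equation*}
\Phi(\rho)\gammafcnregularizedQ(\mu,\tfrac{\mu}{B}\rho^2) + \gammafcnregularizedP(\mu,\tfrac{\mu}{B}\rho^2) - (A/B)^\mu\gammafcnregularizedP(\mu,\tfrac{\mu}{A}\rho^2),
\end{equation*}
and rewriting $\Phi(\rho) = 1 - (1-\Phi(\rho))$ while using the normalization $\gammafcnregularizedP(\mu,x) + \gammafcnregularizedQ(\mu,x) = 1$ collapses the first two terms into $1 - (1-\Phi(\rho))\gammafcnregularizedQ(\mu,\tfrac{\mu}{B}\rho^2)$, matching the expression inside the bracket in the statement.

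The only non-mechanical step is handling $W(\RK_{\mathcal{R}},\psidensity)$, but this term is already given in closed form in \eqref{eq:W.RK.R.specialized} via the regularized incomplete beta function, so it simply persists as a constant additive term in the final expression. Substituting everything into Theorem~\ref{thm:wce.form.B} yields the claimed identity. I do not anticipate any genuine obstacle; the computation reduces to combining \eqref{eq:integral.aux.A}, \eqref{eq:Phi.integral}, and the complementarity $\gammafcnregularizedP + \gammafcnregularizedQ = 1$, all of which are already developed in the preceding discussion.
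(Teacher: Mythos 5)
Your proposal is correct and follows the same route the paper itself takes: specialize Theorem~\ref{thm:wce.form.B} to the Nakagami/Gaussian pair, substitute the closed forms \eqref{eq:integral.aux.A} and \eqref{eq:Phi.integral} for the two radial integrals, and collapse $\Phi(\rho)\gammafcnregularizedQ + \gammafcnregularizedP$ to $1 - (1-\Phi(\rho))\gammafcnregularizedQ$ using $\gammafcnregularizedP + \gammafcnregularizedQ = 1$, exactly as the paper does in the derivation of \eqref{eq:single.int.isotropic.special}. No gap; this is the paper's own calculation.
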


The root mean square error of the QMC method for typical node sets reads now as follows.

\begin{thm} \label{thm:rms.wce.form.C}
Under the assumptions of Theorem~\ref{thm:wce.form.C},
\begin{equation*}
\begin{split}
&\sqrt{\mathbb{E}\big[ \{ \wce( \numint[\{ \PT{y}_1, \dots, \PT{y}_N \}]; \mathcal{H}(\RK) ) \}^2 \big]} \\
&\phantom{equals}= \frac{1}{\sqrt{N}} \left( 1 - \left( \frac{A}{B} \right)^\mu - \left[ 1 - C_d \, W( \mathbb{S}^d ) \right] \left[ 1 - 2 \left( \frac{A}{B} \right)^\mu \incompletebetafcnregularized_{\frac{1}{1 + A/B}}( \mu, \mu ) \right] \right)^{1/2},
\end{split}
\end{equation*}
where the points $\PT{y}_1, \dots, \PT{y}_N$ are independently and identically $\psi \lambda_{d+1}$-distributed in $\mathbb{R}^{d+1}$.
\end{thm}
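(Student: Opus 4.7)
The plan is to specialize Theorem~\ref{thm:rms.wce.form.A} to the present isotropic Nakagami setting and evaluate the two resulting ingredients in closed form. Recall that Theorem~\ref{thm:rms.wce.form.A} gives
\[
\mathbb{E}\big[ \{ \wce( \numint[\{ \PT{y}_1, \dots, \PT{y}_N \}]; \mathcal{H}(\RK) ) \}^2 \big] = \frac{1}{N} \Bigl( \int_{\mathbb{R}^{d+1}} \Phi( \| \PT{x} \| ) \, \psi( \PT{x} ) \dd \lambda_{d+1}( \PT{x} ) - W( \RK ) \Bigr),
\]
so the task is to evaluate the single integral and the constant $W(\RK)$ under the Nakagami assumptions \eqref{eq:h.and.Nagakami.example} and \eqref{eq:Phi.and.Nagakami.example.B}.

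First I would pass to spherical coordinates using the isotropy \eqref{eq:psi.isotropic}, \eqref{eq:psi.isotropic.density.and.cdf}, which reduces the single integral to $\int_0^\infty \Phi(r) \, \psidensity(r)\dd r = 1 - \int_0^\infty (1 - \Phi(r)) \, \psidensity(r) \dd r$. Substituting \eqref{eq:h.and.Nagakami.example} with $\nu = \mu$ and \eqref{eq:Phi.and.Nagakami.example.B} collapses the integrand to $\frac{2}{\gammafcn(\mu)}(\mu/B)^\mu r^{2\mu - 1} \exp(-\tfrac{\mu}{A} r^2)$; the change of variables $t = \tfrac{\mu}{A} r^2$ turns it into a gamma integral and produces the clean value $(A/B)^\mu$. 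Hence
\[
\int_{\mathbb{R}^{d+1}} \Phi( \| \PT{x} \| ) \, \psi( \PT{x} ) \dd \lambda_{d+1}( \PT{x} ) = 1 - \left( \frac{A}{B} \right)^\mu.
\]

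Next I would use the product structure \eqref{eq:W.K.factorization} to write $W(\RK) = W(\RK_{\mathcal{R}}, \psidensity) \, W( \RK_{\mathcal{S}} )$. The radial factor is exactly what was computed in \eqref{eq:W.RK.R.specialized}, namely $W(\RK_{\mathcal{R}}, \psidensity) = 1 - 2 (A/B)^\mu \incompletebetafcnregularized_{\frac{B/A}{1 + B/A}}(\mu,\mu)$; the algebraic identity $\tfrac{B/A}{1+B/A} = \tfrac{1}{1+A/B}$ rewrites the argument of the regularized incomplete beta in the form displayed in the statement. The spherical factor is given by \eqref{eq:W.RK.S} as $W( \RK_{\mathcal{S}} ) = 1 - C_d \, W(\mathbb{S}^d)$. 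Plugging both into the expression above and taking the square root delivers the announced identity.

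The only slightly delicate step is the evaluation of $\int_0^\infty (1 - \Phi) \, \psidensity \dd r$; once the substitution recognizes it as $(A/B)^\mu$ times a gamma integral, the rest is bookkeeping — the $1$ in the theorem statement is the probability mass of $\psidensity$, the $(A/B)^\mu$ comes from this integral, and the bracketed product is precisely $W(\RK)$. No new ideas beyond Theorems~\ref{thm:rms.wce.form.A} and the explicit computations already recorded in \eqref{eq:W.RK.R.specialized} and \eqref{eq:W.RK.S} are required.
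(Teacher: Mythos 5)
Your proof is correct and follows essentially the same route as the paper's: specialize Theorem~\ref{thm:rms.wce.form.A}, evaluate $\int_0^\infty \Phi(r)\psidensity(r)\,\dd r = 1-(A/B)^\mu$ (the paper cites this as the $\rho\to\infty$ limit of \eqref{eq:Phi.integral}, which it derived by the very same split $\Phi = 1-(1-\Phi)$ and gamma-integral substitution you use), and then factor $W(\RK)$ via \eqref{eq:W.K.factorization}, \eqref{eq:W.RK.R.specialized} and \eqref{eq:W.RK.S}. The algebraic simplification $\tfrac{B/A}{1+B/A} = \tfrac{1}{1+A/B}$ you note is indeed what reconciles \eqref{eq:W.RK.R.specialized} with the form in the statement.
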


\begin{proof}
Under the assumptions of Theorem~\ref{thm:wce.form.C}, by \eqref{eq:Phi.integral},
\begin{equation} \label{eq:aux.A}
\int_{\mathbb{R}^{d+1}} \Phi( \| \PT{x} \| ) \, \psi( \PT{x} ) \dd \lambda_{d+1}( \PT{x} ) = \int_0^\infty \Phi( r ) \psidensity( r ) \dd r = 1 - \left( \frac{A}{B} \right)^\mu.
\end{equation}
Hence, by Theorem~\ref{thm:rms.wce.form.A}, \eqref{eq:W.K.factorization} and \eqref{eq:W.RK.R.specialized},
\begin{equation*}
\begin{split}
&\int_{\mathbb{R}^{d+1}} \Phi( \| \PT{x} \| ) \, \psi( \PT{x} ) \dd \lambda_{d+1}( \PT{x} ) - W( \RK ) \\
&\phantom{equals}= 1 - \left( \frac{A}{B} \right)^\mu - \left[ 1 - C_d \, W( \mathbb{S}^d ) \right] \left[ 1 - 2 \left( \frac{A}{B} \right)^\mu \incompletebetafcnregularized_{\frac{1}{1 + A/B}}( \mu, \mu ) \right].
\end{split}
\end{equation*}
The right-hand side above is positive for $\mu/A > \nu/B$ and $\mu \geq \nu$. This can be seen from the following observations: By \eqref{eq:C.d} and \eqref{eq:W.S.d} the sequence $(a_d)$ with $a_d = C_d W(\mathbb{S}^d)$ is strictly decreasing and $a_2 = 1 / 3$. Furthermore, the regularized incomplete beta function $\incompletebetafcnregularized_x(a,a)$ is strictly increasing in $x$ on $(0,1)$ and $\incompletebetafcnregularized_{\frac{1}{1 + A/B}}( \mu, \mu ) \geq 1/2$.
\end{proof}

The analogue of Theorem~\ref{thm:rms.wce.form.B} is the following.
\begin{cor} \label{cor:rms.wce.form.C}
Under the assumptions of Theorem~\ref{thm:wce.form.C},
\begin{equation*}
\begin{split}
&\mathbb{E}\big[ \{ \wce( \numint[\{ \rho_1 \PT{y}_1^*, \dots, \rho_N \PT{y}_N^* \} ]; \mathcal{H}(\RK) ) \}^2 \big] \\
&\phantom{equals}= \frac{1}{N} \left( \frac{A}{B} \right)^\mu \left[ 2 \incompletebetafcnregularized_{\frac{1}{1 + A/B}}( \mu, \mu ) - 1 \right] + W( \RK_{\mathcal{R}}, \psidensity ) \left[ \frac{1}{N^2} \mathop{\sum_{i=1}^N \sum_{j=1}^N} \RKmod_{\mathcal{S}}( \PT{y}_i^*, \PT{y}_j^* ) \right],
\end{split}
\end{equation*}
where $\PT{y}_1^*, \dots, \PT{y}_N^* \in \mathbb{S}^{d}$ are fixed and the radii $\rho_1, \dots, \rho_N$ are independently and identically $\psidensity( r ) \lambda_{1}$-distributed. $W( \RK_{\mathcal{R}}, \psidensity )$ is given in \eqref{eq:W.RK.R.specialized}.

\end{cor}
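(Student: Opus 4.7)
The plan is to invoke Theorem~\ref{thm:rms.wce.form.B} directly with the specific isotropic setup of Theorem~\ref{thm:wce.form.C} and then reduce the resulting scalar coefficient to the claimed closed form using two identities that already appear in the proof of Theorem~\ref{thm:rms.wce.form.C}. Since Theorem~\ref{thm:rms.wce.form.B} is stated under precisely the hypotheses that Theorem~\ref{thm:wce.form.C} inherits (isotropic $\psi$ with the Nakagami-type $\psidensity$ and $\Phi$ satisfying \eqref{eq:Phi.and.Nagakami.example.B}), no additional verification of applicability is needed.

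First I would write out the conclusion of Theorem~\ref{thm:rms.wce.form.B}, namely
\begin{equation*}
\mathbb{E}\big[ \{ \wce( \numint[\{ \rho_1 \PT{y}_1^*, \dots, \rho_N \PT{y}_N^* \} ]; \mathcal{H}(\RK) ) \}^2 \big] = \frac{1}{N} \left[ \int_0^\infty \Phi( \rho ) \psidensity( \rho ) \dd \rho - W( \RK_{\mathcal{R}}, \psidensity ) \right] + W( \RK_{\mathcal{R}}, \psidensity ) \left[ \frac{1}{N^2} \mathop{\sum_{i=1}^N \sum_{j=1}^N} \RKmod_{\mathcal{S}}( \PT{y}_i^*, \PT{y}_j^* ) \right].
\end{equation*}
The second summand already matches the target formula verbatim, so all work is concentrated on the bracket in the $1/N$ term.

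For that bracket, I would recycle identity \eqref{eq:aux.A} from the proof of Theorem~\ref{thm:rms.wce.form.C}, which yields $\int_0^\infty \Phi(r)\,\psidensity(r)\,\dd r = 1 - (A/B)^\mu$, together with the closed form \eqref{eq:W.RK.R.specialized}, which gives $W(\RK_{\mathcal{R}}, \psidensity) = 1 - 2(A/B)^\mu\,\incompletebetafcnregularized_{(B/A)/(1+B/A)}(\mu,\mu)$. After noting the algebraic identity $(B/A)/(1+B/A) = 1/(1+A/B)$, subtraction collapses the $1$'s and delivers
\begin{equation*}
\int_0^\infty \Phi( \rho ) \psidensity( \rho ) \dd \rho - W( \RK_{\mathcal{R}}, \psidensity ) = \left( \frac{A}{B} \right)^\mu \left[ 2 \incompletebetafcnregularized_{\frac{1}{1 + A/B}}( \mu, \mu ) - 1 \right],
\end{equation*}
which is exactly the scalar prefactor appearing in the corollary.

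There is essentially no obstacle here, since every ingredient has been established earlier: the corollary is a bookkeeping consequence of Theorem~\ref{thm:rms.wce.form.B} combined with the Nakagami-specific evaluations already carried out. The only mild point to be careful about is tracking the two equivalent forms of the argument of the regularized incomplete beta function so that the final expression matches the stated one; otherwise the proof reduces to one line of substitution after quoting the earlier identities.
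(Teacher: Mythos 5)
Your proposal is correct and matches the paper's proof exactly: the paper also derives the corollary as an immediate consequence of Theorem~\ref{thm:rms.wce.form.B} together with the evaluations \eqref{eq:aux.A} and \eqref{eq:W.RK.R.specialized}. You have simply spelled out the one-line substitution that the paper leaves implicit.
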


\begin{proof}
The result is a consequence of Theorem~\ref{thm:rms.wce.form.B} and \eqref{eq:W.RK.R.specialized} and \eqref{eq:aux.A}.
\end{proof}

When compartmentalizing the selection of random points, we get the following analogue of Theorem~\ref{thm:compartmentalized.B}. Here, we only provide an asymptotic relation giving the order of the dominant term. 
For the statement of the result we make use of the notation $a_n \asymp b_n$, which means that there are numbers $c_1$ and $c_2$ such that $c_1 a_n \leq b_n \leq c_2 a_n$ for sufficiently large $n$.

\begin{cor} \label{cor:compartmentalized.C}
Under the assumptions of Theorem~\ref{thm:wce.form.C}, let $(A_{m,k}^{(M,K)})$, where $K \asymp M^{1/d}$ as $M \to \infty$, be a small-diameter equal mass partition of $\mathbb{R}^{d+1}$ into $M K$ parts of equal mass. Then 
\begin{equation*} 
\mathbb{E}\Bigg[ \sup_{\substack{f \in \mathcal{H}(\RK), \\ \| f \|_{\RK} \leq 1}} \left| \frac{1}{M K} \sum_{m=1}^M \sum_{k=1}^K f( \PT{y}_{m,k}^{(M,K)} ) - \int_{\mathbb{R}^{d+1}} f( \PT{y} ) \psi( \PT{y} ) \dd \lambda_{d+1}( \PT{y} ) \right|^2 \Bigg] \asymp \frac{1}{(M K)^{1+1/(d+1)}}, 
\end{equation*}
where $\PT{y}_{m,k}^{(M,K)}$ is chosen randomly from $A_{m,k}^{(M,K)}$ with respect to the probability measure $\eta_{m,k}^{(M,K)}$ induced by the density function $\psi$; that is,
\begin{equation*} 
\dd \eta_{m,k}^{(M,K)}( \rho \PT{y}^* ) = K M \dd \sigma_d|_{D_{m,M}}( \PT{y}^* ) \frac{2}{\gammafcn( \mu )} \left( \frac{\mu}{B} \right)^\mu \rho^{2\mu-1} \, \exp( - \frac{\mu}{B} \rho^2 ) \dd \rho |_{[\rho_{k-1}, \rho_k)}.
\end{equation*}
\end{cor}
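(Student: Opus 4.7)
The plan is to apply Theorem~\ref{thm:compartmentalized.B} under the specialization of Theorem~\ref{thm:wce.form.C}, extract the dominant order of each of the two non-negative summands on the right-hand side of \eqref{eq:compartmentalized.B} in $M$ and $K$ separately, and then balance via the choice $K \asymp M^{1/d}$.

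First I would analyze the radial quantity $G_{k,K}$ (from the third item in the remarks following Theorem~\ref{thm:compartmentalized.B}). Using Fubini and the identity $\Phi(r) - \Phi(\min\{r,\rho\}) = \int_{\min\{r,\rho\}}^{r}\phi(s)\,ds$, one gets the compact form
\begin{equation*}
G_{k,K} = K^{2}\int_{\rho_{k-1}}^{\rho_{k}}\phi(s)\,[\Psi(\rho_{k})-\Psi(s)][\Psi(s)-\Psi(\rho_{k-1})]\,ds.
\end{equation*}
Since $\Psi(\rho_{k})-\Psi(\rho_{k-1}) = 1/K$, the AM-GM inequality gives the upper bound $\frac{1}{K}\sum_{k} G_{k,K} \leq 1/(4K)$. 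For a matching lower bound I would substitute $t = \Psi(s)$ and then $t = (k-1+y)/K$ to obtain
\begin{equation*}
\frac{1}{K}\sum_{k=1}^{K}G_{k,K} = \int_{0}^{1} y(1-y)\left[\frac{1}{K^{2}}\sum_{k=1}^{K}\tilde q\!\left(\frac{k-1+y}{K}\right)\right]dy, \qquad \tilde q(t)\DEF \frac{\phi(\Psi^{-1}(t))}{\psidensity(\Psi^{-1}(t))}.
\end{equation*}
Under the specialization of Theorem~\ref{thm:wce.form.C}, $\tilde q$ is continuous and strictly positive on $(0,1)$ with $\int_{0}^{1}\tilde q(t)\,dt = \int_{0}^{\infty}\phi(s)\,ds = 1$, and both $\phi$ and $\psidensity$ have explicit Gaussian-times-power forms, so a Riemann-sum argument (with a thin boundary-layer truncation to absorb any endpoint blow-up of $\tilde q$, softened by the weight $y(1-y)$) yields $\frac{1}{K}\sum_{k}G_{k,K} \asymp 1/K$.

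Next, for the sphere term in the second line of \eqref{eq:compartmentalized.B} the small-diameter consequence \eqref{eq:small.diameter.constraints.consequence} directly gives
\begin{equation*}
\frac{1}{M}\sum_{m=1}^{M}\int_{D_{m,M}}\int_{D_{m,M}}\|\PT{x}^{*}-\PT{y}^{*}\|\,\frac{d\sigma_d(\PT{x}^{*})}{1/M}\,\frac{d\sigma_d(\PT{y}^{*})}{1/M} \;\asymp\; M^{-1/d}.
\end{equation*}
The radial prefactor on that same line equals $\int_{0}^{\infty}\Phi(r)\psidensity(r)\,dr - \frac{1}{K}\sum_{k}G_{k,K}$, which by \eqref{eq:aux.A} converges to the positive constant $1 - (A/B)^{\mu}$ and is therefore $\asymp 1$. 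Thus the second line of \eqref{eq:compartmentalized.B} is $\asymp (MK)^{-1}\cdot M^{-1/d}$, whereas the first line is $\asymp (MK)^{-1}\cdot K^{-1}$; non-negativity of both summands (third remark after Theorem~\ref{thm:compartmentalized.B}) rules out cancellation.

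Finally, balancing $K^{-1}\asymp M^{-1/d}$ gives $K \asymp M^{1/d}$, which is precisely the hypothesis of the corollary; with $N = MK$ this yields $M \asymp N^{d/(d+1)}$ and $K \asymp N^{1/(d+1)}$, so the common order of both summands equals $(MK)^{-1}\cdot K^{-1} \asymp N^{-1}\cdot N^{-1/(d+1)} = (MK)^{-1-1/(d+1)}$, as claimed. The main obstacle is the lower bound $\frac{1}{K}\sum_{k}G_{k,K} \gtrsim 1/K$: the upper bound is immediate from AM-GM, but the matching lower bound requires the Riemann-sum convergence of $\tilde q$ to hold uniformly enough in the shift parameter $y$ despite potential polynomial or exponential endpoint singularities of $\tilde q$, and this is handled (routinely but somewhat technically) by the explicit forms of $\phi$ and $\psidensity$ in \eqref{eq:h.and.Nagakami.example} and \eqref{eq:Phi.and.Nagakami.example.B}.
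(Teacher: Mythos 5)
Your proof is correct, and it takes a genuinely different route from the paper's. The paper specializes all the integrals to regularized incomplete gamma functions (using \eqref{eq:Phi.integral}, \eqref{eq:W.RK.R.specialized}, and the defining relation $\gammafcnregularizedQ(\mu,\frac{\mu}{B}\rho_k^2)=1-k/K$), reduces $\Delta_K = \frac{1}{K}\sum_k G_{k,K}$ to the shifted sum $\frac{1}{K}\sum_{k=0}^{K-1}\gammafcnregularizedQ(\mu,\frac{B}{A}\inversegammafcnregularizedQ(\mu,1-\frac{k}{K}))$, and then invokes a second-order Euler--MacLaurin expansion (all of Appendix~\ref{appdx:C}, i.e.\ Lemma~\ref{lem:Lambda.K} and its supporting Lemmas~\ref{lem:auxiliary.main.integral}--\ref{lem:auxiliary.remainder.integral}) to identify the exact leading coefficient $c_d(A,B,\mu)=1/6$. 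Your version instead applies Fubini to obtain $G_{k,K}=K^2\int_{\rho_{k-1}}^{\rho_k}\phi(s)\,[\Psi(\rho_k)-\Psi(s)][\Psi(s)-\Psi(\rho_{k-1})]\,ds$, which buys two things the paper's route does not give for free: an immediate sharp upper bound $\frac{1}{K}\sum_k G_{k,K}\leq\frac{1}{4K}$ by AM--GM (without any special-function identities), and an equally direct lower bound of the same order. For the lower bound you can actually avoid the ``uniform Riemann-sum convergence'' concern you raise: since $\tilde q$ is continuous and strictly positive on the open interval, fixing any $[a,b]\subset(0,1)$ and discarding the boundary indices gives $\frac{1}{K}\sum_{k:\,(k-1+y)/K\in[a,b]}\tilde q\geq(b-a)\min_{[a,b]}\tilde q>0$ uniformly in $y\in[0,1]$, which is all the $\asymp$ statement needs. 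The endpoint singularities of $\tilde q$ only matter if one wants the precise constant $\int_0^1 y(1-y)\,dy\cdot\int_0^1\tilde q=1/6$, which you correctly identify as the same constant the paper extracts via Euler--MacLaurin; but the corollary as stated does not require it. The balance $MK^2\asymp M^{1+1/d}K$ forcing $K\asymp M^{1/d}$ and the arithmetic yielding $(MK)^{-1-1/(d+1)}$ agree with the paper exactly. In short: your argument is sound, is considerably more elementary (no Appendix~\ref{appdx:C}), and the one step you flag as delicate is in fact dispensable for the asymptotic-order claim.
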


\begin{proof}
This result follows from Theorems~\ref{thm:compartmentalized.B} using the explicit kernel given in Theorem~\ref{thm:wce.form.C}. Application of Euler-MacLaurin summation enables us to derive the leading order term of the asymptotics for large $N$. 

First observe, that the radii $0 = \rho_0 < \rho_1 < \cdots < \rho_{K-1} < \rho_K = \infty$ are defined by 
\begin{equation*}
\frac{1}{K} = \int_{\rho_{k-1}}^{\rho_k} \psidensity( r ) \dd r = \gammafcnregularizedQ( \mu, \frac{\mu}{B} \rho_{k-1}^2 ) - \gammafcnregularizedQ( \mu, \frac{\mu}{B} \rho_{k}^2 ), \qquad k = 1, \dots, K.
\end{equation*}
That is, we can write
\begin{equation} \label{eq:aux.B}
\gammafcnregularizedQ( \mu, \frac{\mu}{B} \rho_{k}^2 ) = 1 - \frac{k}{K}, \qquad \frac{\mu}{B} \rho_{k}^2 = \inversegammafcnregularizedQ( \mu, 1 - \frac{k}{K} ), \qquad\quad k = 0, \dots, K,
\end{equation}
where $\inversegammafcnregularizedQ(\mu,s)$ is the \emph{inverse regularized incomplete gamma function}, which gives the solution for $z$ in $s = \gammafcnregularizedQ(\mu,z)$ (cf. \eqref{eq:regularized.gammafcn}).
Next, direct and straightforward computation shows that (cf. \eqref{eq:double.int.isotropic} and \eqref{eq:integral.aux.A})
\begin{align*}
F_{k,K}
&\DEF \int_{\rho_{k-1}}^{\rho_k} \int_{\rho_{k-1}}^{\rho_k} \left( 1 - \Phi( \min\{ r, \rho \} ) \right) \frac{\psidensity( r ) \dd r}{1/K} \frac{\psidensity( \rho ) \dd \rho}{1/K} \\
&= K^2 \, 2 \int_{\rho_{k-1}}^{\rho_k} \left( 1 - \Phi( \rho ) \right) \left\{ \int_\rho^{\rho_k} \psidensity( r ) \dd r \right\} \psidensity( \rho ) \dd\rho \\
&= K^2 \, 2 \int_{\rho_{k-1}}^{\rho_k} \left( 1 - \Phi( \rho ) \right) \left\{ \int_\rho^\infty \psidensity( r ) \dd r - \gammafcnregularizedQ( \mu, \frac{\mu}{B} \rho_{k}^2 ) \right\} \psidensity( \rho ) \dd\rho.
\end{align*}
Hence, by \eqref{eq:aux.B} and using \eqref{eq:Phi.integral},
\begin{align*}
\frac{1}{K} \sum_{k=1}^{K} F_{k,K}
&= K \Bigg[ 2 \int_{0}^{\infty} \left( 1 - \Phi( \rho ) \right) \left\{ \int_\rho^\infty \psidensity( r ) \dd r \right\} \psidensity( \rho ) \dd\rho \\
&\phantom{=}- 2 \left( \frac{A}{B} \right)^\mu \sum_{k=1}^{K} \left( 1 - \frac{k}{K} \right) \left\{ \gammafcnregularizedP\big( \mu, \frac{\mu}{A} \rho_{k}^2 \big)  - \gammafcnregularizedP\big( \mu, \frac{\mu}{A} \rho_{k-1}^2 \big) \right\} \Bigg].
\end{align*}
By \eqref{eq:double.int.isotropic.B} and \eqref{eq:W.RK.R.specialized} and rearrangement of terms
\begin{align}
\frac{1}{K} \sum_{k=1}^{K} F_{k,K}
&= K \Bigg[ 2 \left( \frac{A}{B} \right)^\mu \incompletebetafcnregularized_{\frac{B}{A + B}}( \mu, \mu ) - 2 \left( \frac{A}{B} \right)^\mu \frac{1}{K} \sum_{k=0}^{K-1} \gammafcnregularizedP\big( \mu, \frac{\mu}{A} \rho_{k}^2 \big) \Bigg] \notag \\
&= K \Bigg[ 2 \left( \frac{A}{B} \right)^\mu \frac{1}{K} \sum_{k=0}^{K-1} \gammafcnregularizedQ\big( \mu, \frac{B}{A} \inversegammafcnregularizedQ\big( \mu, 1 - \frac{k}{K} \big) \big) - 2 \left( \frac{A}{B} \right)^\mu \incompletebetafcnregularized_{\frac{A}{A + B}}( \mu, \mu ) \Bigg].
\end{align}
Furthermore, from \eqref{eq:Phi.integral},
\begin{equation*}
\frac{1}{K} \sum_{k=1}^K \int_{\rho_{k-1}}^{\rho_k} \left( 1 - \Phi( r ) \right) \frac{\psidensity( r ) \dd r}{1/K} = \int_0^\infty \left( 1 - \Phi( r ) \right) \psidensity( r ) \dd r = \left( \frac{A}{B} \right)^\mu.
\end{equation*}
These observations lead to
\begin{align*}
\Delta_K
&\DEF \frac{1}{K} \sum_{k=1}^K \left[ \int_{\rho_{k-1}}^{\rho_k} \Phi( r ) \frac{\psidensity( r ) \dd r}{1/K} - \int_{\rho_{k-1}}^{\rho_k} \int_{\rho_{k-1}}^{\rho_k} \Phi( \min\{ r, \rho \} ) \frac{\psidensity( r ) \dd r}{1/K} \frac{\psidensity( \rho ) \dd \rho}{1/K} \right] \\
&= \frac{1}{K} \sum_{k=1}^{K} F_{k,K} - \frac{1}{K} \sum_{k=1}^K \int_{\rho_{k-1}}^{\rho_k} \left( 1 - \Phi( r ) \right) \frac{\psidensity( r ) \dd r}{1/K} \\
&= 2 \left( \frac{A}{B} \right)^\mu \Bigg[ \sum_{k=0}^{K-1} \gammafcnregularizedQ\big( \mu, \frac{B}{A} \inversegammafcnregularizedQ\big( \mu, 1 - \frac{k}{K} \big) \big) - K \, \incompletebetafcnregularized_{\frac{A}{A + B}}( \mu, \mu ) - \frac{1}{2} \Bigg].
\end{align*}
%


Application of the Euler-MacLaurin summation formula (see Appendix~\ref{appdx:C}) yields the up to second order exact asymptotics
\begin{equation*}
\frac{1}{K} \sum_{k=0}^{K-1} \gammafcnregularizedQ\big( \mu, \frac{B}{A} \inversegammafcnregularizedQ\big( \mu, 1 - \frac{k}{K} \big) \big) \sim \incompletebetafcnregularized_{\frac{A}{A + B}}( \mu, \mu ) + \frac{1}{2} \frac{1}{K} + \frac{\frac{1}{2} \left( \frac{B}{A} \right)^\mu c_d(A,B,\mu)}{K^2} \qquad \text{as $K\to \infty$}
\end{equation*}
with $c_d(A,B,\mu) = 1/6$ and therefore the asymptotic formulas
\begin{equation*}
\Delta_K \sim \frac{c_d(A,B,\mu)}{K} \qquad \text{as $K\to \infty$}
\end{equation*}
and
\begin{equation*}
\frac{1}{K} \sum_{k=1}^{K} F_{k,K} \sim \left( \frac{A}{B} \right)^\mu + \frac{c_d(A,B,\mu)}{K} \qquad \text{as $K\to \infty$}
\end{equation*}
and
\begin{equation*}
\begin{split}
&\frac{1}{K} \sum_{k=1}^K \int_{\rho_{k-1}}^{\rho_k} \int_{\rho_{k-1}}^{\rho_k} \Phi( \min\{ r, \rho \} ) \frac{\psidensity( r ) \dd r}{1/K} \frac{\psidensity( \rho ) \dd \rho}{1/K} \\
&\phantom{equals}= 1 - \frac{1}{K} \sum_{k=1}^{K} F_{k,K} \sim 1 - \left( \frac{A}{B} \right)^\mu - \frac{c_d(A,B,\mu)}{K} \qquad \text{as $K\to \infty$.}
\end{split}
\end{equation*}
Hence we get a first order asymptotic relation for the right-hand side of \eqref{eq:compartmentalized.B} of the form (as $K \to \infty$)
\begin{equation*}
\frac{1}{M K} \, \frac{c_d(A,B,\mu)}{K} + \frac{C_d}{M K} \left[ 1 - \left( \frac{A}{B} \right)^\mu \right]  \left[ \frac{1}{M} \sum_{m=1}^M \int_{D_{m,M}} \int_{D_{m,M}} \left\| \PT{x}^* - \PT{y}^* \right\| \frac{\dd \sigma_d( \PT{x}^* )}{1/M} \frac{\dd \sigma_d( \PT{y}^* )}{1/M} \right].
\end{equation*}
Taking into account that the right-most square-bracketed expression is of optimal order $M^{-1/d}$ as $M \to \infty$ (see part~\eqref{rmks.A} of remarks after Theorem~\ref{thm:compartmentalized.B}), we arrive at 
\begin{equation*}
\frac{c_d(A,B,\mu)}{M K^2} + \frac{C_d \left[ 1 - \left( \frac{A}{B} \right)^\mu \right]}{M^{1+1/d} K}  \qquad \text{as $M, K \to \infty$}.
\end{equation*}
The relation between $M$ and $K$ can be chosen such that $M K^2 \asymp M^{1+1/d} K$ a $M, K \to \infty$, which implies that $K \asymp M^{1/d}$. Since $N = M K$, we get that $M K^2 \asymp M^{1+2/d} \asymp N^{1 + 1/(d+1)}$.
%
%
The result follows.
\end{proof}


%
%


\section{Numerical results}

In this section we present numerical results for our quadrature method. In particular, we apply it to option pricing problems. For our method the quadrature points are obtained by generating uniformly distributed points on $\mathbb{S}^{d-1}$ and then varying the distance of each point from the origin such that the resulting point set emulates a normally distributed point set in space. 
%
%
More concretely, we first generate Sobol' points in the cube $[0,1]^{d}$. The first $d-1$ components of a Sobol' point are used to generate a point on $\mathbb{S}^{d-1}$ via an area-preserving map whereas the $d$-th component provides the radial component after a transformation that uses the $\chi$ distribution. 
%
%
%
In this way we obtain a uniformly distributed point set on~$\mathbb{S}^{d-1}$ and (utilizing the radial components) a normally distributed point set in space.
We describe the details of this construction in the following subsection.

\subsection{Construction of points for our method} \label{subsec_construction}

\subsubsection*{Construction of points on $\mathbb{S}^{d-1}$}

In the following we describe the mapping from the unit cube $[0,1)^{d-1}$ to $\mathbb{S}^{d-1}$. We need the {\em regularized incomplete beta function} given by
\begin{equation} \label{eq:def.regularized.incomplete.beta.function}
\betaIregularized_z( a, b ) = \betaB_z( a, b ) / \betaB( a, b ), \quad \betaB( a, b ) = \betaB_1( a, b ), \quad \betaB_z( a, b ) = \int_0^z u^{a-1} \left( 1 - u \right)^{b-1} \dd u.
\end{equation}
For integers $d \ge 3$ we define the function $h_d:[0,1] \to [0,1]$ by means of $h_d(x) \DEF I_x(d/2, d/2)$ and denote its inverse function by $h_d^{-1}$. 
%
Then we define the mapping $T:[0,1)^{d-1} \to \mathbb{S}^{d-1}$, $T(\PT{x}) = \PT{y}_{(d-1)}$, inductively as follows: 
given $\PT{x} = (x_1, x_2, \ldots, x_{d-1}) \in [0,1)^{d-1}$, 
\begin{equation}
\begin{aligned} \label{eq:mapping.T}
x_1 \mapsto \PT{y}_{(1)} 
&=  (\cos (2\pi x_1), \sin (2\pi x_1) ), \\
(x_1, x_2) \mapsto \PT{y}_{(2)} 
&= \left(\sqrt{1 - (1-2x_2)^2} \, \PT{y}_{(1)}, 1-2 x_2 \right), \\
(x_1, x_2, x_3) \mapsto \PT{y}_{(3)} 
&= \left(\sqrt{1-(1-2 h_3^{-1}(x_3))^2 } \, \PT{y}_{(2)}, 1-2 h_3^{-1}(x_3) \right), \\
&\phantom{e}\vdots \\
(x_1, x_2, \ldots, x_{d-1}) \mapsto \PT{y}_{(d-1)} 
&= \left(\sqrt{1-(1- 2 h_{d-1}^{-1}(x_{d-1}))^2 } \, \PT{y}_{(d-2)}, 1- 2 h_{d-1}^{-1}(x_{d-1}) \right).
\end{aligned}
\end{equation}
In Appendix~\ref{appdx:B} we show that the transformation $T$ is area preserving. In particular, if $\PT{x}$ is uniformly distributed in $[0,1)^d$, then $T(\PT{x})$ is uniformly distributed on $\mathbb{S}^{d-1}$.

\subsubsection*{Points in $\mathbb{R}^{d}$}

To obtain points in $\mathbb{R}^d$ which have standard normal distribution, we use the mapping $\Phi: [0,1)^d \to \mathbb{R}^d$ given by
\begin{equation*}
\Phi(\PT{x}) \DEF F_d^{-1}(x_d) \, T(x_1,\ldots, x_{d-1}), \qquad \PT{x} = ( x_1, \dots, x_d ) \in [0,1)^d,
\end{equation*}
where $F_d^{-1} :[0,1) \to [0, \infty)$ is the inverse cdf of the $\chi$-distribution with $d$ degrees of freedom. This $\chi$-distribution is a special case of the Nakagami distribution with shape parameter $\nu = d/2$ and spread $\Omega = d$, see \eqref{eq:Nakagami.distribution}, with probability density function
\begin{equation*}
f_d(x) = \frac{2^{1-d/2}}{\Gamma(d/2)} \, x^{d-1} \EulerE^{x^2/2}
\end{equation*}
and cumulative distribution function 
\begin{equation*}
F_d(x) = \gammafcnregularizedP( d/2, x^2 / 2 )
\end{equation*}
expressed in terms of the regularized incomplete gamma functions $\gammafcnregularizedP(a,b)$ given in \eqref{eq:regularized.gammafcn}.

\subsubsection*{Sobol' points}

In order to obtain a point set in $\mathbb{R}^d$ with normal distribution, we first generate Sobol' points $\PT{x}_1, \PT{x}_2, \ldots, \PT{x}_N \in [0,1)^d$ and then set $\PT{y}_n = \Phi(\PT{x}_n)$ for $1 \leq n \le N$. 

\subsection{A trial integral on the sphere}

In the following we compare the performance of our quadrature point construction (see \eqref{eq:mapping.T}) with two standard constructions, namely:
\begin{description}
\item[Inverse normal cdf] It is well-known that the normalized random vector 
\begin{equation*}
(Z_1, \dots, Z_d) / \sqrt{Z_1^2 + \cdots + Z_d^2}
\end{equation*}
is uniformly distributed on $\mathbb{S}^{d-1}$ for a collection of $d$ random variables $Z_1, \dots, Z_d$ that are independent and identically standard normal distributed. 
Utilizing this fact, a standard method to construct quadrature points on $\mathbb{S}^{d-1}$ is by mapping a well-distributed set in $(0,1)^d$ to $\mathbb{R}^d$ using the inverse standard normal cdf for each point coordinate and subsequently normalize each point so that it lies on $\mathbb{S}^{d-1}$. For our numerical result we use  scrambled Sobol' point sets \cite{O98} in $(0,1)^d$.
\item[Random points on $\mathbb{S}^{d-1}$] A collection of random points in $[0,1)^d$ is mapped to $\mathbb{S}^{d-1}$ using the transformation $T$ from Section~\ref{subsec_construction}. We use matlab functionality to generate pseudo random point sets in $[0,1)^d$ and then map them to the sphere using~$T$.
\end{description}
We numerically approximate the exact integral 
\begin{equation*}
I \DEF \int_{\mathbb{S}^d} f(\PT{x}) \dd \sigma_d(\PT{x})
\end{equation*}
using the equal weight quadrature rule
\begin{equation*}
\frac{1}{N} \sum_{n=1}^N f(\PT{x}_n).
\end{equation*}
%
As trial function we choose $f(\PT{x}) = (x_1 + x_2 + \cdots + x_d)^2$. Then we have
\begin{equation*}
I = \int_{\mathbb{S}^d} (x_1 + x_2 + \cdots + x_d)^2 \dd \sigma_d(\PT{x}) = \int_{\mathbb{S}^d} \left(1+ 2 \sum_{1 \le i < j \le d} x_i x_j \right) \dd \sigma_d(\PT{x}) = 1,
\end{equation*}
where the last step follows by symmetry and the fact that $\int_{\mathbb{S}^d} \dd \sigma_d = 1$.

In Tables~\ref{table: sphere16}, \ref{table: sphere32} and \ref{table: sphere64} we present the integration error
\begin{equation*}
\left| I - \frac{1}{N} \sum_{n=1}^N f(\PT{x}_n) \right|
\end{equation*}
for each of the three constructions: the first uses the inverse beta function and Sobol' points from Section~\ref{subsec_construction}, the second uses a normalization of Sobol' points transformed to $\mathbb{R}^d$ via the inverse normal cdf, and the third uses random points on the sphere.

{
\footnotesize
\begin{table}[htb]
\caption{\bf$d = 16$}\label{table: sphere16}
\begin{tabular}{rccc}
\hline
           & Inverse beta function & Inverse normal cdf & Random points \\
\hline
      1024 &    1.95E-02 &   4.32E-02 &  4.65E-02 \\

      4096 &    5.67E-03 &   1.36E-03 &  1.36E-02 \\

     16384 &   3.82E-03 &   4.01E-03 & 1.51E-02 \\

     65536 &   8.89E-04 &   8.54E-04 & 2.67E-03 \\

    262144 &  1.25E-04 &   2.22E-04 & 1.76E-03 \\

   1048576 & 6.08E-05 &  4.35E-05 &  5.71E-04 \\
\hline
\end{tabular}
\end{table}

\begin{table}[htb]
\caption{\bf$d = 32$}\label{table: sphere32}
\begin{tabular}{rccc}
\hline
           & Inverse beta function & Inverse normal cdf & Random points \\
\hline
      1024 & 5.75E-02 & 7.05E-02 & 4.08E-02 \\

      4096 & 1.22E-02 & 2.99E-02 & 2.68E-02 \\

     16384 &2.35E-04 & 5.27E-03 & 2.76E-02 \\

     65536 &1.04E-03 & 1.07E-03 & 9.51E-03 \\

    262144 &2.47E-04 &7.85E-04 & 1.81E-03 \\

\hline
\end{tabular}
\end{table}

\begin{table}[htb]
\caption{\bf$d = 64$}\label{table: sphere64}
\begin{tabular}{rccc}
\hline
           & Inverse beta function & Inverse normal cdf & Random points \\
\hline
      1024 & 1.86E-02 & 6.84E-03 & 3.11E-02 \\

      4096 & 2.23E-02 & 5.86E-03 & 3.11E-03 \\

     16384 & 1.05E-02 &1.58E-02 & 8.61E-03 \\

     65536 & 2.16E-03 &4.83E-03 & 3.56E-04 \\


\hline
\end{tabular}
\end{table}
\normalsize
}

In these numerical approximations the first construction usually yields the best result followed by the inverse normal cdf construction and the random points.


\subsection{Option pricing problems}

We use our numerical scheme now to approximate option prices and compare it to standard Monte Carlo and Quasi-Monte Carlo approximations (see for instance \cite{Gla04, HeWa14, Leo12, Wa12, WaSl11} for more background on the numerics of option pricing).

We consider the problem of pricing several types of options, where the underlying asset price $S_t$ is driven by a geometric Brownian motion with SDE
\begin{equation*}
\dd S_t = \mu S_t \dd t + \sigma S_t \dd B_t,
\end{equation*}
where $\mu$ is the mean growth rate, $\sigma$ the volatility and $B_t$ is a standard Brownian motion. For simplicity we assume that the asset prices are observed at equally spaces times $t_j = j \Delta t$ for $j = 1, 2, \ldots, d$ with $\Delta t = T/d$ and where $T$ is the time at the expiration date.

\subsubsection*{Arithmetic Asian Call Option}

%

The payoff of a discrete arithmetic Asian call option is
\begin{equation}\label{payoff}
C_A = \max\big\{ \overline{S} - K, 0 \big\},
\end{equation}
where $K$ is the strike price and $$\overline{S} = \frac{1}{d}\sum_{j=1}^{d}S_{t_j}$$ is the arithmetic mean of equally time-spaced underlying asset prices at times $t_j = j \Delta t$. According to the principle of risk-neutral valuation,
the price of such an option could be presented as (see \cite{Hull12})
\begin{equation*}
C_A = \mathbb{E}_\mathbb{Q} \big[ \EulerE^{-\mu T}\max\big\{ \overline{S} - K, 0 \big\} \big],
\end{equation*}
where $\mathbb{E}_\mathbb{Q}[\cdot]$ is the expectation under the risk-neutral measure $\mathbb{Q}$.
Under $\mathbb{Q}$, the asset price at time $t_j$ is
\begin{equation}\label{eq_Stj}
S_{t_j} = S_0 \, \exp\Big( \Big(\mu -\frac{\sigma^2}{2} \Big) t_j + \sigma B_{t_j} \Big),
\end{equation}
where $\mu$ is the risk-free rate, $S_0$ is the asset price at time $t=0$ and $\left(B_{t_1}, \ldots, B_{t_d}\right)^T \sim N(\bold{0}, \Sigma)$, where the components of the covariance matrix are given by $\Sigma(i,j) = \Delta t \min(t_i, t_j)$.

One way to generate the set of random variables $B_{t_j}$ is by a random walk construction, where $B_{t_0} = 0$ and
\begin{align}\label{random_walk}
B_{t_j} =& B_{t_{j-1}} + \sqrt{\Delta t} \, Z_j, \quad \mbox{for } j = 1,\ldots, d,
\end{align}
where $Z_1, \ldots, Z_d$ are independent standard normal random variables. Using \eqref{eq_Stj} we can randomly generate asset prices $S_{t_j}$ for $j = 1, 2, \ldots, d$ and thus obtain an estimation of the payoff \eqref{payoff}. 

There are also variations of the standard random walk construction \eqref{random_walk}, which often perform better in combination with deterministic sampling methods. To obtain other possible constructions, we can view the vector $(B_{t_1}, B_{t_2}, \ldots, B_{t_d})$ as a vector whose components are normally distributed, each component having mean $0$ and the vector of random variables has covariance matrix
\begin{equation*}
\Sigma =  \Delta t
\begin{pmatrix}
1 & 1 & 1 & \ldots & 1 \\
1 & 2 & 2 & \ldots & 2 \\
1 & 2 & 3 & \ldots & 3 \\
\vdots & \vdots & \vdots & \ddots  & \vdots \\
1 & 2 & 3 & \ldots & n
\end{pmatrix}.
\end{equation*}
Then if $Z_1, \ldots, Z_d \sim N(0, 1)$ and $A$ is a matrix such that $A A^\top = \Sigma$, then the vector of random variables
\begin{equation*}
A \begin{pmatrix} Z_1 \\ \vdots \\ Z_d \end{pmatrix}
\end{equation*}
has the same mean and variance as the vector $(B_{t_1}, \ldots, B_{t_d})^\top$. The standard construction chooses
\begin{equation*}
A = \sqrt{\Delta t} \begin{pmatrix} 1 & 0 & 0 & \ldots & 0 \\ 1 & 1 & 0 & \ldots & 0 \\ \vdots & \vdots & \ddots & \ddots & \vdots \\ 1 & 1 & \ldots & 1 & 0 \\  1 & 1 & \ldots & \ldots & 1 \end{pmatrix}.
\end{equation*}

The Principle Component Analysis (PCA) construction on the other hand works the following way. Let $\lambda_1 \ge \lambda_2 \ge \cdots \ge \lambda_d \ge 0$ be the eigenvalues and $v_1, v_2, \ldots, v_d \in \mathbb{R}^d$ be the corresponding normalized eigenvectors of $\Sigma$. Then in the PCA construction, one chooses
\begin{equation*}
A = \left( \sqrt{\lambda_1} v_1, \ldots, \sqrt{\lambda_d} v_d \right).
\end{equation*}

The traditional way to generate normal random variables $Z_1, Z_2, \ldots, Z_d \sim N(0, 1)$ is by using the inverse normal cumulative distribution function $\Phi^{-1}$ and pseudo random points in $(0, 1)^d$, that is, $z_j = \Phi^{-1}(u_j)$, where $u_j \in (0,1)$. We use this method as a benchmark (termed MC (Monte Carlo) in the table below). For this method, there is no noticeable difference between the standard construction and the PCA construction and thus we only use the standard construction in this case. 
The Quasi-Monte Carlo approach replaces the pseudo-random numbers with low-discrepancy point sets $\{\PT{x}_1, \ldots, \PT{x}_N\}$, where $\PT{x}_n = (x_{1,n}, \ldots, x_{d,n}) \in (0,1)^d$. In our case we use scrambled Sobol' point sets \cite{O98} for the numerical simulations. We generate vectors $(z_{1,n}, \ldots, z_{d,n}) \in \mathbb{R}^d$ by setting $z_{j,n} = \Phi^{-1}(x_{j,n})$ and using the vectors $(z_1, \ldots, z_{d,n})$ in the standard or PCA construction. In this case we perform the numerical simulations for both, the standard construction and the PCA construction. These results serve as a second benchmark. 
The third main construction generates the points $(z_{1,n}, \ldots, z_{d,n}) \in \mathbb{R}^d$ using the method described in Section~\ref{subsec_construction}. 
It should be noted that except for the Monte Carlo method, we use scrambled Sobol' points, where we perform $128$ independent scramblings (i.e., we choose a Sobol' point set of size $N/128$ and use $128$ scramblings to generate $N$ points altogether). Scrambling has been introduced in \cite{Ow1997} and simplified versions which are easier to implement have been discussed in \cite{Hi1996,Ma1998,Ow2003}.

{
\footnotesize
\begin{table}[htb] \caption{Asian option}\label{table: Asian Option}
\begin{tabular}{r|c|c|c|c|c}
\hline
\multicolumn{ 1}{c|}{} & \multicolumn{ 3}{c}{Inverse normal} & \multicolumn{ 2}{|c}{Sphere normal} \\
\cline{2-6}
\multicolumn{ 1}{c|}{N} & \multicolumn{ 1}{c}{MC} & \multicolumn{ 1}{|c|}{Sobol' \& Standard} & \multicolumn{ 1}{c}{Sobol' \& PCA} & \multicolumn{ 1}{|c|}{Sphere \& Standard} & \multicolumn{ 1}{c}{Sphere \& PCA} \\

\hline
32768& 4.2E-02 & 1.4E-02 & 5.8E-03 & 1.5E-02 & 5.6E-03 \\
65536& 3.5E-02 & 1.0E-02 & 3.2E-03 & 1.1E-02 & 2.8E-03 \\
131072&2.4E-02 & 4.9E-03 & 1.7E-03 & 5.2E-03 & 1.8E-03 \\
262144&1.6E-02 & 2.8E-03 & 7.7E-04 & 3.2E-03 & 7.0E-04 \\
524288&1.3E-02 & 2.1E-03 & 3.8E-04 & 1.7E-03 & 3.1E-04 \\

\hline
\end{tabular}
\end{table}
\normalsize
}
In our numerical simulation, we assume that
\begin{equation*}
S_0 = 100, \quad K = 100, \quad T = 1, \quad \sigma = 0.2, \quad \mu = 0.05, \quad d = 30.
\end{equation*}
Table~\ref{table: Asian Option} shows the numerical results, which contains the standard deviation for each point set and path construction method. We observe that our sphere normal generation achieves a clear advantage over the crude Monte Carlo method with traditional normal vector generation. The results are largely similar to the Sobol' point set using the inverse cumulative distribution function (although often marginally better). We also observe that the PCA construction significantly improves both constructions using Sobol' point sets, the inverse normal cumulative distribution function method and the spherical method. This may indicate  that our sphere construction, like the construction via the inverse normal cumulative distribution function, has especially good uniform properties in the first few dimensions compared to the latter ones. This property is intrinsic in the Sobol' point set and may thus be inherited from the Sobol' point set.

\subsubsection*{Barrier Option and Digital Option}

Now we turn to more complex financial derivatives such as barrier options and digital options, both of which have discontinuous payoff functions at the terminal time $T$. Consider an up-and-out barrier Asian option, whose terminal payoff is
\begin{equation*}
C_b = \indicatorfunction_{\{S_{\max} < b\}}\, \max\{ \overline{S} - K, 0\},
\end{equation*}
where $\indicatorfunction_{\{S_{\max} < b\}}$ is the indicator function of $\{ S_{\max} < b\}$, $b$ is the knock-out barrier price and $S_{\max}$ given by $\max\{S_{t_1}, \ldots ,S_{t_d}\}$ denotes the maximum of the underlying asset during this time period.
This option behaves in every way like an Asian option, except when the underlying asset price moves above the knock-out barrier, in which case the option becomes invalid.

A digital Asian option's payoff is
\begin{equation*}
C_D = \indicatorfunction_{\{\overline{S} > K\}}.
\end{equation*}
The digital option is valid only if some condition is satisfied, and its payoff could only be $0$ or $1$, unlike the options we discussed above.

Under the Black-Scholes model, barrier options and digital options could also be priced by applying the risk-neutral valuation principle. Using the same notation as above, the prices of a barrier Asian option and a digital Asian option could be written as
\begin{subequations}
\begin{align}
C_b &= \mathbb{E}_\mathbb{Q} [\EulerE^{-rT} \, \indicatorfunction_{\{S_{\max} < b\}}\,\max\{ S_{\mathrm{ave}} - K, 0 \}],\label{eq:barrier price}\\
C_D &= \mathbb{E}_\mathbb{Q} [\EulerE^{-rT} \, \indicatorfunction_{\{S_{\mathrm{\mathrm{ave}}} > K\}}].\label{eq:digital price}
\end{align}
\end{subequations}
We simulate the asset prices in the same way as in the Asian option application.

In our numerical simulation, we assume that
\begin{equation*}
S_0 = 100, \quad K = 100, \quad b = 130, \quad T = 1, \quad \sigma = 0.2, \quad r = 0.05, \quad d = 30.
\end{equation*}
The numerical results are presented in Tables~\ref{table: Barrier Option} and \ref{table: Digital Option}.
{
\footnotesize
\begin{table}[htb] \caption{Barrier option}\label{table: Barrier Option}
\begin{tabular}{r|c|c|c|c|c}
\hline
\multicolumn{ 1}{c|}{} & \multicolumn{ 3}{c}{Inverse Normal} & \multicolumn{ 2}{|c}{Sphere Normal} \\
\cline{2-6}
\multicolumn{ 1}{c|}{N} & \multicolumn{ 1}{c}{MC} & \multicolumn{ 1}{|c|}{Sobol' \& Standard} & \multicolumn{ 1}{c}{Sobol' \& PCA} & \multicolumn{ 1}{|c|}{Sphere \& Standard} & \multicolumn{ 1}{c}{Sphere \& PCA} \\

\hline
32768&  2.0E-02 & 1.8E-02 & 1.2E-02 & 2.1E-02 & 1.1E-02 \\
65536&  1.5E-02 & 1.2E-02 & 9.0E-03 & 1.4E-02 & 6.9E-03 \\
131072& 1.0E-02 & 9.7E-03 & 6.0E-03 & 9.0E-03 & 5.3E-03 \\
262144& 7.9E-03 & 6.7E-03 & 3.4E-03 & 6.9E-03 & 3.3E-03 \\
524288& 5.1E-03 & 4.4E-03 & 2.4E-03 & 4.1E-03 & 2.2E-03 \\

\hline
\end{tabular}
\end{table}

\begin{table}[htb] \caption{Digital option}\label{table: Digital Option}
\begin{tabular}{r|c|c|c|c|c}
\hline
\multicolumn{ 1}{c|}{} & \multicolumn{ 3}{c}{Inverse Normal} & \multicolumn{ 2}{|c}{Sphere Normal} \\
\cline{2-6}
\multicolumn{ 1}{c|}{N} & \multicolumn{ 1}{c}{MC} & \multicolumn{ 1}{|c|}{Sobol' \& Standard} & \multicolumn{ 1}{c}{Sobol' \& PCA} & \multicolumn{ 1}{|c|}{Sphere \& Standard} & \multicolumn{ 1}{c}{Sphere \& PCA} \\

\hline
32768&  3.0E-03 & 1.5E-03 & 6.1E-04 & 1.5E-03 & 6.1E-04 \\
65536&  2.0E-03 & 1.0E-03 & 4.3E-04 & 1.1E-03 & 3.9E-04 \\
131072& 1.4E-03 & 7.8E-04 & 2.7E-04 & 6.7E-04 & 2.3E-04 \\
262144& 8.8E-04 & 4.7E-04 & 1.8E-04 & 5.0E-04 & 1.6E-04 \\
524288& 6.6E-04 & 3.5E-04 & 1.3E-04 & 3.4E-04 & 1.0E-04 \\

\hline
\end{tabular}
\end{table}
\normalsize
}

From the numerical results of these two applications we observe that the advantage of the constructions based on low-discrepancy point sets is diminishing and the construction based on the inverse normal distribution function performs similarly as the construction based on points on the sphere. We also observe that the improvement brought on by the PCA construction is not as large as in the previous Asian option application. This is an expected phenomenon because the barrier and digital options involve discontinuous payoffs, therefore the integrands in (\ref{eq:barrier price}) and (\ref{eq:digital price})  are discontinuous as well. In order to achieve an improved rate of convergence, the low-discrepancy methods generally require smoothness of the integrand, which is not given in these examples. 


\appendix

\section{Integral representation for $f \in \mathcal{H}(\RK)$}
\label{appdx:A}

We show that every $f \in \mathcal{H}(\RK)$ has an integral representation \eqref{eq:U.normal.form}. Let $f \in \mathcal{H}(\RK)$. Since the linear forms $\sum_{j=1}^n \alpha_j \, \RK( \PT{\cdot}, \PT{y}_j )$ lie dense in~$\mathcal{H}(\RK)$, there exists a sequence of functions
\begin{equation*}
U_n( \PT{x} ) \DEF \sum_{j=1}^n \alpha_{n,j} \, \RK( \PT{\cdot}, \PT{y}_{n,j} ) \in \mathcal{H}(\RK), \qquad n = 1, 2, 3, \dots,
\end{equation*}
such that $\| f - U_n \|_{\RK} \to 0$ as $n \to \infty$. From \eqref{eq:U.normal.form},
\begin{equation*}
U_n( \PT{x} ) = \int_0^\infty \int_{-1}^1 \int_{\mathbb{S}^d} \indicatorfunction_{\mathcal{C}( \PT{z}^*, t; R )}( \PT{x} ) u_n( \PT{z}^*, t, R )  \dd \sigma_d( \PT{z}^* ) \dd t \, \phi( R ) \dd R, \qquad \PT{x} \in \mathbb{R}^{d+1},
\end{equation*}
where the corresponding functions $u_n$ are given by
\begin{equation*}
u_n( \PT{z}^*, t, R ) \DEF \sum_{j=1}^n \alpha_{n,j} \, \indicatorfunction_{\mathcal{C}( \PT{z}^*, t; R )}( \PT{y}_{n,j} ).
\end{equation*}
Elementary algebra shows that
\begin{equation*}
\left\| U_n - U_m \right\|_{\RK}^2
= \int_0^\infty \int_{-1}^1 \int_{\mathbb{S}^d} \left| u_n( \PT{z}^*, t, R ) - u_m( \PT{z}^*, t, R ) \right|^2 \dd \sigma_d( \PT{z}^* ) \dd t \, \phi( R ) \dd R.
\end{equation*}
The sequence $(U_n)$ converges in the Hilbert space $\mathcal{H}(\RK)$ and is, thus, a Cauchy sequence satisfying $\| U_n - U_m \|_{\RK} \to 0$ as $m, n \to \infty$. Consequently, by the above relation, $(u_n)$ is a Cauchy sequence in $\mathbb{L}_2( \mathbb{S}^d \times [-1,1] \times (0,\infty); \mu_d )$ where $\dd \mu_d( \PT{z}^*, t, R ) = \dd \sigma_d( \PT{z}^* ) \dd t \, \phi( R ) \dd R$. Hence $(u_n)$ has a limit function $u \in \mathbb{L}_2( \mathbb{S}^d \times [-1,1] \times (0,\infty); \mu_d )$. Set
\begin{equation*}
U( \PT{x} ) \DEF \int_0^\infty \int_{-1}^1 \int_{\mathbb{S}^d} \indicatorfunction_{\mathcal{C}( \PT{z}^*, t; R )}( \PT{x} ) u( \PT{z}^*, t, R )  \dd \sigma_d( \PT{z}^* ) \dd t \, \phi( R ) \dd R, \qquad \PT{x} \in \mathbb{R}^{d+1}.
\end{equation*}
Then as $n \to \infty$, 
\begin{equation*}
\left\| U_n - U \right\|_{\RK}^2 = \int_0^\infty \int_{-1}^1 \int_{\mathbb{S}^d} \left| u_n( \PT{z}^*, t, R ) - u( \PT{z}^*, t, R ) \right|^2 \dd \sigma_d( \PT{z}^* ) \dd t \, \phi( R ) \dd R \to 0. 
\end{equation*}
Also, by the definition of $U_n$, $\| f - U_n \|_{\RK} \to 0$ as $n \to \infty$. Hence the right-hand side in
\begin{equation*}
0 \leq \left\| f - U \right\|_{\RK} \leq \left\| f - U_n \right\|_{\RK} + \left\| U_n - U \right\|_{\RK}, \qquad n = 1, 2, \dots,
\end{equation*}
tends to $0$ as $n \to \infty$. We conclude that $\| f - U \|_{\RK} = 0$, which shows the assertion.

\section{Mapping via cylindrical sphere coordinates}
\label{appdx:B}

We show that the mapping used in Section~\ref{subsec_construction} is area preserving. We show this result for elementary intervals $[\PT{0}, \PT{x}]$; i.e., subintervals of the form $\prod_{j=1}^d [0, x_j]$, where $\PT{x} = (x_1, \ldots, x_d) \in [0,1)^d$. The result for general Lebesgue measurable sets then follows.

By a slight abuse of notation, let $T_{(q)}( \PT{x} ) \DEF \PT{y}_{(q)}$ be the projection of $\PT{x}$ into level $q$ of~\eqref{eq:mapping.T}. We compute the surface area of $T_{(d)}([\PT{0}, \PT{x}])$. 
Set $t_q = 1 - 2 h_{q}^{-1}(x_q)$ for $2 \leq q \leq d$. 
We make use of the 'cylindrical' decomposition (cf. M{\"u}ller~\cite{Mu1966})
\begin{equation*}
\dd \sigma_1( \PT{y}_{(1)} ) = \frac{\dd \phi}{2 \pi}, \quad 
\dd \sigma_{q} ( \PT{y}_{(q)} ) = \frac{\omega_{q-1}}{\omega_{q}} \left( 1 - \tau_{q}^2 \right)^{q/2-1} \dd \tau_{q} \, \dd \sigma_{q-1}( \PT{y}_{(q-1)} ), \,\, q = 2, 3, \ldots, d 
\end{equation*}
of the normalized surface area measure $\sigma_{q}$ on $\mathbb{S}^{q}$, where $\omega_{q}$ denotes the surface area of~$\mathbb{S}^{q}$. 
This gives the recursion $\sigma_1( T_{(1)}( [0, x_1] ) ) = \int_0^{2\pi x_1} \dd \phi / (2 \pi ) = x_1$ and
\begin{equation*}
\begin{split}
\sigma_{q}( T_{(q)}( [\PT{0}, \PT{x}] ) ) 
&= \frac{\omega_{{q}-1}}{\omega_{q}} \int_{t_{q}}^1 \int_{T_{(q-1)}( [\PT{0}, \PT{x}] )} \left( 1 - \tau_{q}^2 \right)^{{q}/2-1} \dd \sigma_{{q}-1}( \PT{x}_{{q}-1} ) \dd \tau_{{q}} \\
&= \Psi_{q}( t_{q} ) \, \sigma_{{q}-1}( T_{(q-1)}( [\PT{0}, \PT{x}] ) ), \qquad q = 2, 3, \ldots, d,
\end{split}
\end{equation*}
where $\Psi_{q}( t )$ is in fact the surface area of a spherical cap $\{ \PT{z} \in \mathbb{S}^{q} : \PT{z} \cdot \PT{a} \geq t \}$, 
\begin{align*}
\Psi_{q}( t )
&= 2^{{q}-1} \frac{\omega_{{q}-1}}{\omega_{q}} \int_0^{(1-t)/2} u^{{q}/2-1} \left( 1 - u \right)^{{q}/2-1} \dd u = \betaIregularized_{(1-t)/2}({q}/2, {q}/2)
\end{align*}
in terms of the regularized incomplete beta function (see \eqref{eq:def.regularized.incomplete.beta.function}).
Note that $\Psi_2(t) = (1 - t)/2$. 
Thus 
\begin{equation*}
\sigma_d( T_{(d)}( [\PT{0}, \PT{x}] ) ) = x_1 \, x_2 \, \prod_{{q}=3}^d \betaIregularized_{h^{-1}_{q}(x_{q})}({q}/2, {q}/2) = \prod_{{q}=1}^d x_{q} = \lambda_d([\PT{0}, \PT{x}]),
\end{equation*}
which shows the result for all subintervals. (By convention, an empty product equals $1$.)

\section{Application of Euler-MacLaurin summation}
\label{appdx:C}

In this appendix we prove the following (up to second order exact) asymptotic result.

\begin{lem} \label{lem:Lambda.K}
Let $\mu > 0$ and $c > 1$. Then 
\begin{equation*}
\Lambda_K \DEF \Lambda_K( \mu, c ) \DEF \frac{1}{K} \sum_{k=1}^K \gammafcnregularizedQ\big( \mu, c \, \inversegammafcnregularizedQ\big( \mu, \frac{k}{K} \big) \big) \sim \incompletebetafcnregularized_{\frac{1}{1 + c}}( \mu, \mu ) + \frac{1}{2} \frac{1}{K} + \frac{\frac{1}{2} \, c^\mu \, c_d(\mu, c)}{K^2}
\end{equation*}
as $K \to \infty$, where $c_d(\mu,c) = 1/6$.
\end{lem}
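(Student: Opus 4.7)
The plan is to view $\Lambda_K$ as a right-endpoint Riemann sum of the function
\begin{equation*}
g(s) \DEF \gammafcnregularizedQ\bigl(\mu,\, c\, \inversegammafcnregularizedQ(\mu, s)\bigr), \qquad s \in [0,1],
\end{equation*}
and apply the Euler--MacLaurin summation formula to extract the first two correction terms. Concretely, the target identity reads
\begin{equation*}
\Lambda_K = \int_0^1 g(s)\,\dd s + \frac{g(1)-g(0)}{2K} + \frac{B_2}{2!\,K^2}\bigl(g'(1) - g'(0)\bigr) + o(K^{-2}),
\end{equation*}
with $B_2 = 1/6$, and the three summands must be identified with the three terms in the claim.

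First I would evaluate the leading integral. The substitution $s = \gammafcnregularizedQ(\mu, t)$ (so $\dd s = -\EulerE^{-t} t^{\mu-1}/\gammafcn(\mu)\,\dd t$) gives
\begin{equation*}
\int_0^1 g(s)\,\dd s = \frac{1}{\gammafcn(\mu)}\int_0^\infty \gammafcnregularizedQ(\mu, ct)\, t^{\mu-1}\EulerE^{-t}\,\dd t,
\end{equation*}
which is $\mathbb{P}(S > cT)$ for independent $\mathrm{Gamma}(\mu,1)$ variables $S$ and $T$. Since $S/(S+T) \sim \mathrm{Beta}(\mu,\mu)$ and the event $\{S > cT\}$ coincides with $\{S/(S+T) > c/(1+c)\}$, this probability equals $\incompletebetafcnregularized_{1/(1+c)}(\mu,\mu)$, matching the leading term of the claim. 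Equivalently, one may expand the inner $\gammafcnregularizedQ$ as an integral, interchange order, and change variables $(t,u) \mapsto (v,w)$ with $v = t+u$, $w = u/(u+t)$ to separate the integrals and obtain the same conclusion.

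Next I would compute the boundary data. Plainly $g(0) = \gammafcnregularizedQ(\mu,\infty) = 0$ and $g(1) = \gammafcnregularizedQ(\mu,0) = 1$, giving the coefficient $1/2$ of $1/K$. Using $\partial_t\gammafcnregularizedQ(\mu,t) = -\EulerE^{-t} t^{\mu-1}/\gammafcn(\mu)$ together with the inverse function rule, the chain rule collapses nicely to
\begin{equation*}
g'(s) = c^\mu \exp\!\bigl(-(c-1)\,\inversegammafcnregularizedQ(\mu, s)\bigr),
\end{equation*}
so that $g'(1) = c^\mu$ and, since $c > 1$, $g'(0) = 0$. The $K^{-2}$-coefficient is therefore $B_2(g'(1)-g'(0))/2 = c^\mu/12$, and matching this with the claimed $\tfrac12 c^\mu c_d(\mu,c)$ forces $c_d(\mu,c) = 1/6$.

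The main technical obstacle will be justifying the $o(K^{-2})$ remainder, because higher derivatives of $g$ may develop endpoint singularities: the asymptotics $\inversegammafcnregularizedQ(\mu,s) \sim (\mu\gammafcn(\mu)(1-s))^{1/\mu}$ as $s\to 1$ can cause $g''$ to blow up algebraically when $\mu > 1$, while as $s\to 0$ the polynomial factors in higher derivatives compete with the decaying exponential $\exp(-(c-1)\inversegammafcnregularizedQ(\mu,s))$. Fortunately $g$ is convex on $(0,1)$ with $\int_0^1 g''(s)\,\dd s = g'(1) - g'(0) = c^\mu < \infty$, so $g''$ is integrable; a standard endpoint-localization of the Euler--MacLaurin error, combined with explicit asymptotic estimates for $g^{(j)}$ obtained from those of $\inversegammafcnregularizedQ$, yields the required $o(K^{-2})$ bound. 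Everything else is a direct calculation.
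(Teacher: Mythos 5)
Your strategy---interpret $\Lambda_K$ as a Riemann sum of $u(s) = \gammafcnregularizedQ(\mu, c\,\inversegammafcnregularizedQ(\mu,s))$ and apply Euler--MacLaurin---is the paper's, and your probabilistic evaluation of the leading integral via the $\mathrm{Beta}(\mu,\mu)$ law of $S/(S+T)$ is a clean shortcut for the paper's hypergeometric computation in Lemma~\ref{lem:auxiliary.main.integral}; the boundary data $g(0)=0$, $g(1)=1$, $g'(0)=0$, $g'(1)=c^\mu$ also match the paper's $u'(x) = c^\mu \EulerE^{-(c-1)\inversegammafcnregularizedQ(\mu,x)}$. Where you genuinely diverge is in the remainder. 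You invoke the \emph{second-order} Euler--MacLaurin formula, which (after one integration by parts) produces the $\bernoulliB_2\bigl(g'(1)-g'(0)\bigr)/(2K^2)$ boundary term directly and leaves a remainder of the form $-\tfrac{1}{2K^2}\int g''(x)\,\widetilde{\bernoulliB}_2(Kx)\,\dd x$. Showing this is $o(K^{-2})$ amounts to proving $\int g''(x)\,\widetilde{\bernoulliB}_2(Kx)\,\dd x \to 0$, which does hold---it is a Riemann--Lebesgue-type consequence of $g''\in L^1(0,1)$ and of $\widetilde{\bernoulliB}_2$ having mean zero over each period---but your appeal to ``a standard endpoint-localization of the Euler--MacLaurin error'' only gestures at this; it is the one step that must be stated and proved, and care is also needed at $x=1$ where $g''$ blows up for $\mu>1$ (integrable, so the IBP survives, but continuity-based versions of Euler--MacLaurin on $[0,1]$ do not apply as stated). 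The paper, by contrast, never leaves the \emph{first-order} formula: it integrates from $1/K$ to $1$ rather than $0$ to $1$, writes the remainder as $\mathcal{R}_K = \int_{1/K}^1 u'(x)\,\widetilde{\bernoulliB}_1(Kx)\,\dd x$, and in Lemma~\ref{lem:auxiliary.remainder.integral} extracts $c^\mu/(12K)+o(K^{-1})$ by splitting into unit cells, exploiting the odd symmetry of $\bernoulliB_1$ about $1/2$, and using convexity of $u$ together with two-sided mean-value bounds---no second integration by parts and no oscillation lemma. Both routes give the correct answer; yours is conceptually more transparent (the $c^\mu/12$ falls out of the formula at once), but as written the $o(K^{-2})$ bound is asserted rather than demonstrated.
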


For $\mu > 0$ and $c > 1$, we define the function
\begin{equation*}
u( x ) \DEF \gammafcnregularizedQ\big( \mu, c \, \inversegammafcnregularizedQ\big( \mu, x \big) \big), \qquad 0 \leq x \leq 1.
\end{equation*}
It can be readily seen that $\lim_{x \to 0^+} u( x ) = 0$ and $\lim_{x \to 1^-} u( x ) = 1$. The function $u$ is strictly monotonically increasing on $(0,1)$. This can be seen from the derivative 
\begin{equation*}
u^\prime( x ) = c^\mu \, \EulerE^{-(c-1) \inversegammafcnregularizedQ( \mu, x )}
\end{equation*}
which is positive on $(0,1)$ and has the value $0$ at $0$ and $c^\mu$ at $1$. We shall use the estimate
\begin{equation} \label{eq:auxiliary.estimate}
\gammafcnregularizedQ( \mu, c \, y ) = \frac{c^\mu}{\gammafcn( \mu )} \int_y^\infty \EulerE^{-c \tau} \tau^{\mu - 1} \dd \tau = c^\mu \, \EulerE^{-(c-1)\tau^*} \gammafcnregularizedQ( \mu, y ) < c^\mu \, \EulerE^{-(c-1) y} \gammafcnregularizedQ( \mu, y ),
\end{equation}
where the second equality follows from the mean value theorem for some $\tau^* > y$. Then
\begin{equation} \label{eq:auxiliary.u.estimate}
u( 1/K ) < \frac{c^\mu}{K} \, \EulerE^{-(c-1) \inversegammafcnregularizedQ( \mu, 1/K )} = o( K^{-1} ) \qquad \text{as $K \to \infty$.}
\end{equation}

\begin{lem} \label{lem:auxiliary.main.integral}
Let $\mu > 0$ and $c > 1$. Then
\begin{equation*}
\int_{1/K}^1 u( x ) \dd x = \incompletebetafcnregularized_{1/(1+c)}( \mu, \mu ) + o( K^{-2} ) \qquad \text{as $K \to \infty$.} 
\end{equation*}
\end{lem}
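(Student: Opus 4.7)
The plan is to evaluate $\int_{0}^{1} u(x)\,dx$ exactly, obtaining the incomplete beta value, and then show that the missing tail $\int_{0}^{1/K} u(x)\,dx$ is $o(K^{-2})$.

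For the closed-form evaluation of $\int_0^1 u(x)\,dx$, I would substitute $x = \gammafcnregularizedQ(\mu, y)$, which sends $(0,1)$ to $(0,\infty)$ with $dx = -\frac{1}{\Gamma(\mu)} y^{\mu-1} \EulerE^{-y}\,dy$. Since then $\inversegammafcnregularizedQ(\mu, x) = y$, this yields
\begin{equation*}
\int_0^1 u(x)\,dx = \frac{1}{\gammafcn(\mu)} \int_0^\infty \gammafcnregularizedQ(\mu, c y)\, y^{\mu-1} \EulerE^{-y}\,dy = \frac{c^\mu}{\gammafcn(\mu)^2} \int_0^\infty \int_y^\infty s^{\mu-1} \EulerE^{-c s}\, y^{\mu-1} \EulerE^{-y}\,ds\,dy,
\end{equation*}
where the second equality uses the substitution $t = cs$ in $\gammafcnregularizedQ(\mu, cy) = \gammafcn(\mu)^{-1} \int_{cy}^\infty \EulerE^{-t} t^{\mu-1}\,dt$. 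Swapping the order of integration and then substituting $y = ts$ with $t \in [0,1]$, the inner integral in $s$ evaluates to $\gammafcn(2\mu)/(c+t)^{2\mu}$, giving
\begin{equation*}
\int_0^1 u(x)\,dx = \frac{c^\mu \gammafcn(2\mu)}{\gammafcn(\mu)^2} \int_0^1 \frac{t^{\mu-1}}{(c+t)^{2\mu}}\,dt.
\end{equation*}
Finally, the substitution $v = t/(c+t)$ (so $c+t = c/(1-v)$ and $dt = c(1-v)^{-2}\,dv$) sends $[0,1]$ bijectively onto $[0, 1/(1+c)]$ and transforms the integrand into $c^{-\mu} v^{\mu-1}(1-v)^{\mu-1}$. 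After cancelling $c^\mu$ and recognising $\gammafcn(2\mu)/\gammafcn(\mu)^2 = 1/\betafcn(\mu,\mu)$, I obtain $\int_0^1 u(x)\,dx = \incompletebetafcnregularized_{1/(1+c)}(\mu,\mu)$.

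For the tail bound, I would use the monotonicity of $u$ noted just above the lemma. Since $u$ is increasing on $(0,1)$,
\begin{equation*}
0 \leq \int_0^{1/K} u(x)\,dx \leq \frac{1}{K}\, u(1/K),
\end{equation*}
and by \eqref{eq:auxiliary.u.estimate} the right-hand side is strictly less than $c^\mu K^{-2} \EulerE^{-(c-1)\inversegammafcnregularizedQ(\mu, 1/K)}$. Because $\inversegammafcnregularizedQ(\mu, 1/K) \to \infty$ as $K \to \infty$, the exponential factor decays faster than any power of $K$, so the tail is indeed $o(K^{-2})$. Subtracting from the full integral gives the claim.

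The substantive step is the first one: the identification of the double integral with a value of the regularized incomplete beta function. The two substitutions are each clean, but it is easy to trip on the Jacobian in the step $v = t/(c+t)$; the tail estimate, by contrast, is essentially immediate from \eqref{eq:auxiliary.u.estimate} once monotonicity of $u$ is noted.
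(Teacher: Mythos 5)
Your proof is correct, and the central computation is carried out by a genuinely different, and arguably cleaner, route than the one in the paper.

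Both proofs begin with the same substitution $x = \gammafcnregularizedQ(\mu,y)$, which converts $\int_0^1 u(x)\,\dd x$ into a double integral over the region $\{0 < y,\ t > cy\}$ (equivalently $\{0 < y < s\}$ after rescaling $t = cs$). From there the approaches diverge. The paper passes to symmetric-function coordinates $v = t+y$, $w = ty$ with Jacobian $1/\sqrt{v^2 - 4w}$, evaluates the resulting integral with the aid of Mathematica to obtain a Gauss hypergeometric expression
$\tfrac{1}{2} - \tfrac{\gammafcn(\mu+1/2)}{\sqrt{\pi}\,\gammafcn(\mu)}\,\tfrac{c-1}{c+1}\,\HypergeomSMALL{2}{1}{1-\mu,\ 1/2}{3/2}{\bigl(\tfrac{c-1}{c+1}\bigr)^2}$,
and then invokes \cite[Eq.~8.17.7 and Sec.~8.17]{NIST:DLMF} to reduce this to $\incompletebetafcnregularized_{1/(1+c)}(\mu,\mu)$. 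Your derivation instead applies Tonelli (the integrand is nonnegative) to exchange the order, rescales $y = ts$ to isolate $\int_0^\infty s^{2\mu-1}\EulerE^{-(c+t)s}\,\dd s = \gammafcn(2\mu)/(c+t)^{2\mu}$, and finishes with the M\"obius substitution $v = t/(c+t)$, which maps $[0,1]$ to $[0,1/(1+c)]$ and turns the integrand directly into the Euler form $v^{\mu-1}(1-v)^{\mu-1}$ after the $c^{\pm\mu}$ powers cancel. This gets to the regularized incomplete beta value without any CAS assistance and without passing through hypergeometric identities; what it buys is a hand-checkable, self-contained evaluation. I have verified the Jacobian bookkeeping in your last substitution ($c+t = c/(1-v)$, $\dd t = c(1-v)^{-2}\dd v$), and the exponents do cancel to give $c^{-\mu}\betafcn_{1/(1+c)}(\mu,\mu)$, so the normalization $\gammafcn(2\mu)/\gammafcn(\mu)^2 = 1/\betafcn(\mu,\mu)$ closes it out correctly. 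For the tail, your argument via monotonicity of $u$ (established in the text just before the lemma via the sign of $u'$) and \eqref{eq:auxiliary.u.estimate} is a minor simplification of the paper's bound, which instead changes variables back to $y = \inversegammafcnregularizedQ(\mu,x)$ and applies \eqref{eq:auxiliary.estimate} inside the integral; both yield $\int_0^{1/K} u(x)\,\dd x = O(u(1/K)/K)$ and hence $o(K^{-m})$ for every $m$.
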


\begin{proof}
The substitution $x = \gammafcnregularizedQ( \mu, y )$, $\dd x = - ( 1 / \gammafcn( \mu ) ) \EulerE^{-y} y^{\mu-1} \dd y$, and changing to an integral representation  gives
\begin{equation*}
\int_{0}^1 u(x) \dd x = \frac{1}{\gammafcn( \mu )} \int_{0}^\infty \gammafcnregularizedQ( \mu, c \, y ) \, \EulerE^{-y} y^{\mu-1} \dd y = \frac{1}{[ \gammafcn( \mu ) ]^2} \int_0^\infty \int_{c y}^\infty \EulerE^{-(t + y)} \left( t y \right)^{\mu - 1} \dd t \dd y.
\end{equation*}
The change of variables $v = t + y$ and $w = t y $ with the Jacobian $1 / \sqrt{v^2-4w}$ and computation with the help of Mathematica gives
\begin{align*}
\int_{0}^1 u(x) \dd x 
&= \frac{1}{[ \gammafcn( \mu ) ]^2} \int_0^\infty \int_0^{\frac{c}{(1+c)^2} u^2} \frac{\EulerE^{-v} w^{\mu-1}}{\sqrt{v^2-4w}} \dd v \dd w \\
&= \frac{1}{2} - \frac{\gammafcn( \mu + 1/2 )}{\sqrt{\pi} \, \gammafcn( \mu )} \, \frac{c-1}{c+1} \, \Hypergeom{2}{1}{1-\mu,1/2}{3/2}{\left( \frac{c-1}{c+1} \right)^2}.
\end{align*}
The Gauss hypergeometric function can be expressed in terms of an incomplete beta function (\cite[Eq.~8.17.7]{NIST:DLMF}) which can be turned into its regularized form; i.e., using relations for the regularized incomplete beta function (see \cite[Sec.~8.17]{NIST:DLMF})
\begin{equation*}
\int_{0}^1 u(x) \dd x = \frac{1}{2} - \frac{1}{2} \incompletebetafcnregularized_{(c-1)^2/(c+1)^2}\big( \frac{1}{2}, \mu \big) = \frac{1}{2} \incompletebetafcnregularized_{4 c/(c+1)^2}\big( \mu, \frac{1}{2} \big) = \incompletebetafcnregularized_{1/(1+c)}( \mu, \mu ).
\end{equation*}
Hence
\begin{equation*}
\int_{1/K}^1 u(x) \dd x = \incompletebetafcnregularized_{1/(1+c)}( \mu, \mu ) - \frac{1}{\gammafcn( \mu )} \int_{\inversegammafcnregularizedQ(\mu, 1/K)}^\infty \gammafcnregularizedQ( \mu, c \, y ) \, \EulerE^{-y} y^{\mu-1} \dd y.
\end{equation*}
Using the estimate \eqref{eq:auxiliary.estimate} with $y = \inversegammafcnregularizedQ(\mu, 1/K)$, the subtracted integral can be bounded as 
\begin{align*}
0 
&< \frac{1}{\gammafcn( \mu )} \int_{\inversegammafcnregularizedQ(\mu, 1/K)}^\infty \gammafcnregularizedQ( \mu, c \, y ) \, \EulerE^{-y} y^{\mu-1} \dd y < \frac{c^\mu}{K} \int_{\inversegammafcnregularizedQ(\mu, 1/K)}^\infty e^{-c \, y} y^{\mu-1} \dd y = \frac{u(1/K)}{K} = o( K^{-2} )
\end{align*}
as $K \to \infty$, where the last step follows from \eqref{eq:auxiliary.u.estimate}.
\end{proof}

Let $\widetilde{\bernoulliB}_n(x)$ denote the periodic Bernoulli function of degree $n$ given by $\widetilde{\bernoulliB}_n( x ) = \bernoulliB_n( x )$ for $0 \leq x < 1$ and $\widetilde{B}_n( x + 1 ) = \widetilde{B}_n( x )$ for $x \in \mathbb{R}$, where $\bernoulliB_n(x)$ are Bernoulli polynomials. In particular, $\bernoulliB_0( x ) \equiv 1$ and $\bernoulliB_1( x ) = x - 1/2$. 

\begin{lem} \label{lem:auxiliary.remainder.integral}
Let $\mu > 0$ and $c > 1$. Then
\begin{equation*}
\int_{1/K}^1 u^\prime( x ) \, \widetilde{\bernoulliB}_1( K x ) \, \dd x = \frac{c^\mu}{12 K} + o( K^{-1} ) \qquad \text{as $K \to \infty$.} 
\end{equation*}
\end{lem}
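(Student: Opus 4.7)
The plan is one integration by parts against the continuous $1$-periodic antiderivative of $\widetilde{\bernoulliB}_1$, which produces the claimed leading term as a boundary contribution, together with a Riemann--Lebesgue argument that dismisses the resulting remainder. Since $\widetilde{\bernoulliB}_2$ is continuous with almost-everywhere derivative $2\widetilde{\bernoulliB}_1$, and $u'$ is smooth on $(0,1]$, integration by parts yields
\begin{equation*}
\int_{1/K}^1 u'(x)\, \widetilde{\bernoulliB}_1(Kx)\, dx = \Big[ u'(x)\, \frac{\widetilde{\bernoulliB}_2(Kx)}{2K} \Big]_{1/K}^{1} - \frac{1}{2K} \int_{1/K}^1 u''(x)\, \widetilde{\bernoulliB}_2(Kx)\, dx.
\end{equation*}

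Next, I would evaluate the boundary. Using $K \in \mathbb{N}$ and $1$-periodicity of $\widetilde{\bernoulliB}_2$, one has $\widetilde{\bernoulliB}_2(K) = \widetilde{\bernoulliB}_2(1) = \bernoulliB_2(0) = 1/6$, so the bracket equals $(u'(1) - u'(1/K))/(12K)$. With $u'(x) = c^\mu e^{-(c-1)\inversegammafcnregularizedQ(\mu, x)}$ and the classical tail expansion $\inversegammafcnregularizedQ(\mu, 1/K) = \log K + O(\log\log K)$, one gets $u'(1/K) = O(K^{-(c-1)+o(1)}) = o(1)$ since $c > 1$. Hence the boundary produces the desired leading contribution $c^\mu / (12K) + o(1/K)$.

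The key step is showing that the remainder integral is $o(1)$, so that $-(2K)^{-1} \cdot o(1) = o(1/K)$. The substitution $x = \gammafcnregularizedQ(\mu, y)$ shows $u''(x) = c^\mu(c-1)\gammafcn(\mu)\, y^{1-\mu} e^{-(c-2)y}$ is nonnegative and $\int_0^1 u''(x)\, dx = c^\mu$, so $u'' \in \IL_1(0,1)$. Using the absolutely convergent Fourier series $\widetilde{\bernoulliB}_2(t) = \pi^{-2}\sum_{n \geq 1} n^{-2}\cos(2\pi n t)$,
\begin{equation*}
\int_0^1 u''(x)\, \widetilde{\bernoulliB}_2(Kx)\, dx = \frac{1}{\pi^2}\sum_{n \geq 1} \frac{1}{n^2}\int_0^1 u''(x) \cos(2\pi n K x)\, dx.
\end{equation*}
Each inner integral is a Fourier coefficient of an $\IL_1$-function, hence tends to zero by Riemann--Lebesgue; the sum converges to zero by dominated convergence with the bound $\|u''\|_{\IL_1}/n^2$. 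The truncated piece near the origin is controlled directly by $\big|\int_0^{1/K} u''\, \widetilde{\bernoulliB}_2(Kx)\, dx\big| \leq \tfrac{1}{6}\int_0^{1/K} u''(x)\, dx = u'(1/K)/6 = o(1)$. Combined, the remainder integral on $(1/K, 1)$ is $o(1)$, and the IBP identity collapses to $c^\mu/(12K) + o(1/K)$, as desired.

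The main obstacle is precisely this $o(1)$ bound on the remainder. A second integration by parts would demand $u''' \in \IL_1(0,1)$, but the chain rule produces a factor $y^{1-2\mu}$ that is not integrable near $y=0$ once $\mu > 1$, so pointwise control of $u'''$ is unavailable in the required generality. The Riemann--Lebesgue route sidesteps this by using only the $\IL_1$-integrability of $u''$ together with the zero-mean oscillation of $\widetilde{\bernoulliB}_2(K\cdot)$, which is precisely the regularity that is unconditionally available and which suffices for the qualitative $o(1/K)$ claim of the lemma.
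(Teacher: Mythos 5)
Your argument is correct. The integration by parts is valid since $\widetilde{\bernoulliB}_2(K\cdot)/(2K)$ is Lipschitz with a.e. derivative $\widetilde{\bernoulliB}_1(K\cdot)$ and $u'$ is $C^1$ on $[1/K,1]$; the boundary term is computed exactly, using $\widetilde{\bernoulliB}_2$ at integers $= 1/6$, $u'(1)=c^\mu$, and $u'(1/K)=o(1)$ because $\inversegammafcnregularizedQ(\mu,1/K)\to\infty$ and $c>1$; and the remainder is controlled by the $\IL_1$-integrability of $u''$ (indeed $\int_0^1 u''=c^\mu$, a change of variables to $y=\inversegammafcnregularizedQ(\mu,x)$ confirms this) together with Riemann--Lebesgue on the absolutely convergent Fourier expansion of $\widetilde{\bernoulliB}_2$, plus the crude bound $|\widetilde{\bernoulliB}_2|\le 1/6$ on the small piece $(0,1/K)$. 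The route is, however, genuinely different from the paper's. The paper's proof stays entirely on the level of $u'$: it partitions $(1/K,1)$ into $K-1$ period intervals, exploits the antisymmetry of $\bernoulliB_1(y)=y-1/2$ about $y=1/2$ to rewrite the integral as $K^{-1}\int_0^{1/2}H_K(y)(1/2-y)\,\dd y$ with $H_K$ a telescoping difference, and sandwiches $H_K(y)$ between $c^\mu(1-2y)\mp o(1)$ via mean-value estimates resting on $u''>0$; then $\int_0^{1/2}(1-2y)(1/2-y)\,\dd y = 1/12$ yields the answer. What the paper's proof buys is elementarity (no Fourier analysis, only monotonicity and the mean value theorem) and explicit two-sided control of the error; what your proof buys is brevity, transparency of where the constant $1/12=(1/6)/2$ comes from (evaluation of $\widetilde{\bernoulliB}_2$ at integer points), and a clean conceptual reason—the zero-mean oscillation of $\widetilde{\bernoulliB}_2(K\cdot)$ tested against an $\IL_1$ density—for why the remainder is lower order. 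Your remark on the failure of a second integration by parts ($u'''\notin\IL_1$ for $\mu\ge 1$, since the chain rule produces $y^{1-2\mu}$ against the density $y^{\mu-1}e^{-y}\,\dd y$) correctly identifies why one must stop at $u''$ and argue qualitatively rather than extract another explicit term.
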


\begin{proof}
By dividing the integration domain and using the periodicity of $\widetilde{\bernoulliB}_1( K x )$, we get
\begin{equation*}
\int_{1/K}^1 u^\prime( x ) \, \widetilde{\bernoulliB}_1( K x ) \, \dd x = \sum_{k=1}^{K-1} \int_{k/K}^{(k+1)/K} u^\prime( x ) \, \widetilde{\bernoulliB}_1( K x ) \, \dd x = \frac{1}{K} \sum_{k=1}^{K-1} \int_0^1 u^\prime\big( \frac{k+y}{K} ) \, \bernoulliB_1( y ) \, \dd y.
\end{equation*}
By symmetry of $\bernoulliB_1( y ) = y - 1/2$ about $y = 1/2$, we can write
\begin{equation*}
\int_{1/K}^1 u^\prime( x ) \, \widetilde{\bernoulliB}_1( K x ) \, \dd x = \frac{1}{K} \int_0^{1/2} H_K( y ) \left( \frac{1}{2} - y \right) \dd y, 
\end{equation*}
where
\begin{equation*}
H_K( y ) \DEF \sum_{k=1}^{K-1} \left[ u^\prime\big( \frac{k+1-y}{K} ) - u^\prime\big( \frac{k+y}{K} ) \right] = u^\prime\big( 1 - \frac{y}{K} ) - u^\prime\big( \frac{1-y}{K} ) - G_K( y ), 
\end{equation*}
and
\begin{equation*}
G_K( y ) \DEF \sum_{k=1}^{K-1} \left[ u^\prime\big( \frac{k+y}{K} ) - u^\prime\big( \frac{k-y}{K} ) \right].
\end{equation*}
For $y \in [0,1/2]$, the square-bracketed expressions in the last two displayed formulas are non-negative as can be seen from the positivity of 
\begin{equation*}
u^{\prime\prime}( x ) = \gammafcn( \mu ) \left( c - 1 \right) c^\mu \, \EulerE^{-(c-2) \inversegammafcnregularizedQ(\mu, x)} \left[ \inversegammafcnregularizedQ(\mu, x) \right]^{1-\mu}. 
\end{equation*}
A mean value theorem application yields
\begin{align*}
G(y) 
&\leq \int_{1}^K \left[ u^\prime\big( \frac{k+y}{K} ) - u^\prime\big( \frac{k-y}{K} ) \right] \dd y = K \left( \int_{(1+y)/K}^{1 - (1-y)/K} - \int_{(1-y)/K}^{1-(1+y)/K} \right) u^\prime( x ) \, \dd x \\
&= K \int_{1-(1+y)/K}^{1 - (1-y)/K} u^\prime( x ) \, \dd x - K \int_{(1-y)/K}^{(1+y)/K} u^\prime( x ) \, \dd x \\
&= 2 y u^\prime( z^* ) - 2 y u^\prime( z^{**} ) \leq 2y \, c^\mu
\end{align*}
for some $z^* \in ( 1 - (1+y)/K, 1 - (1-y)/K )$ and some $z^{**} \in ( (1-y)/K, (1+y)/K )$. So
\begin{equation*}
H_K( y ) \geq u^\prime\big( 1 - \frac{y}{K} ) - u^\prime\big( \frac{1-y}{K} ) - 2y \, c^\mu \geq  c^\mu \left( 1 - 2y \right) - \left( c^\mu - u^\prime\big( 1 - \frac{1}{K} \big) + u^\prime\big( \frac{1}{K} \big) \right) 
\end{equation*}
for $0 \leq y \leq 1/2$. In a similar way (details are left to the reader), one can show that
\begin{equation*}
H_K( y ) \leq c^\mu \left( 1 - 2y \right) + \left( c^\mu - u^\prime\big( 1 - \frac{2}{K} \big) + u^\prime\big( \frac{3}{K} \big) \right), \quad 0 \leq y \leq 1/2.
\end{equation*}
Hence
\begin{equation*}
\int_{1/K}^1 u^\prime( x ) \, \widetilde{\bernoulliB}_1( K x ) \, \dd x = \frac{c^\mu}{K} \int_0^{1/2} \left( 1 - 2 y \right) \left( \frac{1}{2} - y \right) \dd y + o( K^{-1} ) = \frac{c^\mu}{12 K} + o( K^{-1} ) 
\end{equation*}
as $K \to \infty$.
\end{proof}

We are ready now to prove Lemma~\ref{lem:Lambda.K}.

\begin{proof}[Proof of Lemma~\ref{lem:Lambda.K}]
By the Euler-MacLaurin summation formula we have
\begin{equation*}
\Lambda_K = \frac{1}{K} \sum_{k=1}^K u\big( \frac{k}{K} \big) = \int_{1/K}^1 u( x ) \dd x + \frac{1}{2K} \left( u\big( \frac{1}{K} \big) + u\big( 1 \big) \right) + \frac{1}{K} \mathcal{R}_K,
\end{equation*}
where the remainder is given by (see Lemma~\ref{lem:auxiliary.remainder.integral})
\begin{equation*}
\frac{1}{K} \mathcal{R}_K = \frac{1}{K} \int_{1/K}^1 u^\prime( x ) \, \widetilde{\bernoulliB}_1( K x ) \, \dd x = \frac{c^\mu}{12 K^2} + o( K^{-2} ) \qquad \text{as $K \to \infty$.}
\end{equation*}
Furthermore, since $u(1) = 1$, the estimate \eqref{eq:auxiliary.u.estimate} gives
\begin{equation*}
\frac{1}{2K} \left( u\big( \frac{1}{K} \big) + u\big( 1 \big) \right) = \frac{1}{2K} + o( K^{-2} ) \qquad \text{as $K \to \infty$.}
\end{equation*}
Finally, Lemma~\ref{lem:auxiliary.main.integral} gives 
\begin{equation*}
\int_{1/K}^1 u( x ) \dd x = \incompletebetafcnregularized_{1/(1+c)}( \mu, \mu ) + o( K^{-2} ) \qquad \text{as $K \to \infty$.}
\end{equation*}
This completes the proof.
\end{proof}

\bibliographystyle{abbrv}
\bibliography{bibliographya}
\end{document}